\def\Xint#1{\mathchoice
   {\XXint\displaystyle\textstyle{#1}}
   {\XXint\textstyle\scriptstyle{#1}}
   {\XXint\scriptstyle\scriptscriptstyle{#1}}
   {\XXint\scriptscriptstyle\scriptscriptstyle{#1}}
   \!\int}
\def\XXint#1#2#3{{\setbox0=\hbox{$#1{#2#3}{\int}$}
     \vcenter{\hbox{$#2#3$}}\kern-.5\wd0}}
\def\dashint{\Xint-}
\newtheorem{theorem}{Theorem}[section]
\newtheorem{lemma}[theorem]{Lemma}
\newtheorem{corollary}[theorem]{Corollary}
\theoremstyle{definition}
\theoremstyle{remark}
\newtheorem{remark}[theorem]{Remark}
\newcommand{\mysection}[1]{\section{#1}
\setcounter{equation}{0}}
\newcommand{\bR}{\mathbb R}
\newcommand\cL{\mathcal{L}}
\newcommand\wto{\rightharpoonup}
\newcommand\rV{\mathring{V}}
\newcommand\rW{\mathring{W}}
\newcommand\cLt{{}^t\!\mathcal{L}}
\newcommand\U{\mathcal{U}}
\newcommand{\VMO}{\mathrm{VMO}}
\newcommand{\PH}{\mathrm{(PH)}}
\newcommand{\ip}[1]{\left\langle#1\right\rangle}
\newcommand{\set}[1]{\left\{#1\right\}}
\newcommand{\norm}[1]{\left\lVert#1\right\rVert}
\newcommand{\abs}[1]{\left\lvert#1\right\rvert}
\newcommand{\tri}[1]{|\!|\!|#1|\!|\!|}
\renewcommand{\epsilon}{\varepsilon}
\renewcommand{\vec}[1]{\boldsymbol{#1}}
\DeclareMathOperator*{\esssup}{ess\,sup}
\DeclareMathOperator{\dist}{dist}
\begin{document}
\title[Green's matrices]
{On the Green's matrices of strongly parabolic systems of second order}

\author[S. Cho]{Sungwon Cho}
\address[S. Cho]{Department of Mathematics,
Michigan State University, East Lansing, MI 48824, USA}
\email{cho@math.msu.edu}

\author[H. Dong]{Hongjie Dong}
\address[H. Dong]
{School of Mathematics, Institute for Advanced Study,
Einstein Drive, Princeton, NJ 08540, USA}
\email{hjdong@math.ias.edu}

\author[S. Kim]{Seick Kim}
\address[S. Kim]{Centre for Mathematics and its Applications,
The Australian National University, ACT 0200, Australia}
\email{seick.kim@maths.anu.edu.au}

\thanks{The second named author is partially supported by the
National Science Foundation under agreement No. DMS-0111298.}

\thanks{The third named author is supported by the Australian Research Council.}

\subjclass[2000]{Primary 35A08, 35K40; Secondary 35B45}

\keywords{Green function, Green's matrix, fundamental solution, fundamental matrix, second order parabolic system, Gaussian estimate.}

\begin{abstract}
We establish existence and various estimates of fundamental matrices
and Green's matrices for divergence form, second order strongly parabolic
systems in arbitrary cylindrical domains under the assumption that
solutions of the systems satisfy an interior H\"older continuity estimate.
We present a unified approach valid for both the scalar and the vectorial cases.
\end{abstract}

\maketitle

\mysection{Introduction} \label{intro}
In this paper, we study Green's matrices (or Green's functions) of second
order, strongly parabolic systems of divergence form
\begin{equation*}
\tag{P}\label{P}
\sum_{j=1}^N \cL_{ij}u^j:=
D_t u^i- \sum_{j=1}^N \sum_{\alpha,\beta=1}^n
D_{x_\alpha}(A^{\alpha\beta}_{ij}(t,x) D_{x_\beta}u^j),\quad i=1,\ldots,N,
\end{equation*}
in a cylindrical domain
$\U=\bR\times \Omega$ where $\Omega$ is an open connected set in $\bR^n$
with $n\ge 1$.
By a Green's matrix for the system \eqref{P} in $\U=\bR\times\Omega$
we mean an $N\times N$ matrix valued function
$\Gamma_{ij}(t,x,s,y)$  ($x,y\in\Omega$ and $t,s\in\bR$) which satisfies the following
(see Theorem~\ref{thm1} for more precise statement):
\begin{align*}
\sum_{j=1}^N \cL_{ij}\Gamma_{jk}(\cdot,\cdot,s,y)&=\delta_{ik}\delta_{(s,y)}
(\cdot,\cdot),\\
\lim_{t\to s+}
\Gamma_{ij}(t,x,s,\cdot)&=\delta_{ij}\delta_{x}
(\cdot),\\
\Gamma_{ij}(\cdot,\cdot,s,y)=0\quad&\text{on}\quad
\partial\U=\bR\times\partial\Omega,
\end{align*}
where $\delta_{ij}$ is the Kronecker delta symbol,
$\delta_{(s,y)}(\cdot,\cdot)$ and $\delta_{x}(\cdot)$ are
Dirac delta functions. 
In particular, when $\U=\bR^{n+1}$, the Green's matrix (or Green's function)
is called the fundamental matrix (or fundamental solution).

We prove that if weak solutions of \eqref{P} satisfy
an interior H\"older continuity estimate
(see Section~\ref{sec:PH} for the precise formulation),
then the Green's matrix exists in $\U$ and satisfies several natural
growth properties; see Theorem~\ref{thm1}.
Moreover, when $\U=\bR^{n+1}$, we show that the fundamental matrix
satisfies the semi-group property as well as the upper Gaussian bound of
Aronson \cite{Aronson}; see Theorem~\ref{thm2}.
The method we use does not involve Harnack type inequalities or the maximum
principle, and works for both the scalar and the vectorial situation.
Moreover, we do not require the base $\Omega$ of the cylinder
$\U=\bR\times\Omega$ to be bounded or to have a regular boundary.
In the scalar case (i.e., $N=1$), such an interior H\"older continuity estimate
for weak solutions is a direct consequence of 
the celebrated theorem of Nash \cite{Nash}
(see also Moser \cite{Moser}),
and thus, we in particular prove the existence and growth properties
of Green's functions for the uniformly parabolic equations of divergence
form with bounded measurable coefficients in arbitrary cylindrical domains;
see Corollary~\ref{cor1}.
If $n=2$ (i.e., $\Omega\subset \bR^2$)
and the coefficients $A^{\alpha\beta}_{ij}(t,x)$
of the system \eqref{P} are independent of $t$,
then we will find that the Green's matrix exists in any cylindrical domain
and satisfies the growth properties stated in Theorem~\ref{thm1};
see Corollary~\ref{cor2}.
Also, if $N>1$ and the coefficients $A^{\alpha\beta}_{ij}(t,x)$
of the system \eqref{P} are uniformly continuous in $x$
and measurable in $t$ or belong to the class of $\VMO$,
then we have the same conclusion; see Corollary~\ref{cor3}.

Let us briefly review the history of work in this area.
Fundamental solutions of parabolic equations of divergence form with bounded
measurable coefficients have been a subject of research for many years.
The first significant step in this direction was made by Nash \cite{Nash}
who established certain estimates of the fundamental solutions in proving
local H\"older continuity of weak solutions.
Aronson \cite{Aronson} proved Gaussian upper and lower bounds for
the fundamental solutions by using the parabolic Harnack inequality
of Moser \cite{Moser}.
Fabes and Strook \cite{FS} showed that the idea of Nash could be
used to establish Aronson's Gaussian bounds, which consequently gave
a new proof of Moser's parabolic Harnack inequality.
In \cite{Auscher}, Auscher gave a new proof of Aronson's Gaussian upper bound
for the fundamental solution of parabolic equations with time independent
coefficients, which carries over to the case of a complex perturbation
of real coefficients. 
Recently, it is noted in \cite{HofKim2} that Aronson's upper bound
is equivalent to the local boundedness property of weak solutions of strongly
parabolic systems.
Green's functions of elliptic equations of divergence form in bounded domains
have been extensively studied by
Littman, Stampacchia, and Weinberger \cite{LSU}
and Gr\"{u}ter and Widman \cite{GW},
whereas the Green's matrices of the elliptic systems with continuous
coefficients in bounded $C^1$ domains have been discussed by
Fuchs \cite{Fuchs} and Dolzmann and M\"uller \cite{DM}.
Very recently, Hofmann and Kim \cite{HofKim1} gave a unified approach in
studying Green's functions/matrices in arbitrary domains
valid for both scalar equations and systems of elliptic type by
considering a class of operators $L$ such that weak solutions of $Lu=0$
satisfy an interior H\"older estimate.
Some parts of the present article may be considered as a natural follow-up
of their work in the parabolic setting.
Readers interested in the construction of fundamental matrices
for parabolic systems in nondivergence form with H\"older continuous
coefficients are asked to refer to Eidel'man \cite{Eidelman},
Friedman \cite{Friedman}, or Lady\v{z}enskaja et al. \cite{LSU}.
Also, we would like to bring attention to a paper by Escauriaza
\cite{Escauriaza} on the fundamental solutions of elliptic and parabolic
equations in nondivergence form.

The organization of this paper is as follows.
In Section~\ref{main}, we define the property (PH) for parabolic
systems in terms of Morrey-Campanato type H\"older estimates
for weak solutions.
As we mentioned earlier, in the scalar case,
such a property is a direct consequence of the interior H\"older continuity
estimates by Nash \cite{Nash} and Moser \cite{Moser}.
We also present some other examples of parabolic systems
satisfying the property (PH), including the \textit{almost diagonal systems}
and the systems with coefficients from the class $\VMO_x$ (see below
for the definitions).
We close the section by presenting our main theorems.
In Theorem~\ref{thm1}, we state the existence, uniqueness, and properties of
Green's matrices in an arbitrary cylindrical domain for
the parabolic systems satisfying the property (PH).
Theorem~\ref{thm2} establishes Gaussian bounds and semi-group properties
for the fundamental matrices of parabolic systems satisfying the property (PH).
In Section~\ref{sec3}, we prove Theorem~\ref{thm1} in the entire space
assuming that the property (PH) holds globally.
In Section~\ref{sec4}, we give the proof for Theorem~\ref{thm1} in the general
setting.
Finally, we prove Theorem~\ref{thm2} in Section~\ref{sec5}.

\mysection{Preliminaries and main results} \label{main}

\subsection{Basic Notations}
We use $X=(t,x)$ to denote a point in
$\bR^{n+1}=\bR\times \bR^n$ with $n\ge 1$;
$x=(x_1,\ldots, x_n)$ will always be a point in $\bR^n$.
We also write $Y=(s,y)$, $X_0=(t_0,x_0)$, etc.
We define the parabolic distance between the points $X=(t,x)$ and $Y=(s,y)$
in $\bR^{n+1}$ as
\begin{equation*}
|X-Y|_p:=\max(\sqrt{\abs{t-s}},\abs{x-y}),
\end{equation*}
where $\abs{\,\cdot\,}$ denotes the usual Euclidean norm.
We usually use $\U$ to denote an open set in $\bR^{n+1}$ and $\Omega$
to denote an open set in $\bR^n$.
We define
\begin{equation*}
B_r(x)=\set{y\in\bR^n:|y-x|<r}
\end{equation*}
and use the following notations for basic cylinders in $\bR^{n+1}$:
\begin{align*}
Q^-_r(X)&=(t-r^2,t)\times B_r(x),\\
Q^+_r(X)&=(t,t+r^2)\times B_r(x),\\
Q_r(X)&=(t-r^2,t+r^2)\times B_r(x).
\end{align*}
Note that $Q_r(X)=\set{Y\in\bR^{n+1}:|Y-X|_p<r}$.
We use $\partial_p Q^-_r(X)$ and $\partial_p Q^+_r(X)$
to denote the parabolic forward and backward
boundaries of $Q^{-}_r(X)$ and $Q^{+}_r(X)$, respectively; i.e.,
\begin{align*}
\partial_p Q^-_r(X)&=(t-r^2,t)\times \partial B_r(x) \cup \{t-r^2\}\times
\overline B_r(x),\\
\partial_p Q^+_r(X)&=(t,t+r^2)\times \partial B_r(x) \cup \{t+r^2\}\times
\overline B_r(x),
\end{align*}
where  $\partial B_r(x)$ and $\overline B_r(x)$ denote
the usual boundary and closure of $B_r(x)$ in $\bR^n$;
i.e., $\partial B_r(x)=\set{y\in\bR^n:|y-x|=r}$
and $\overline B_r(x)=\set{y\in\bR^n:|y-x|\leq r}$.

For a given function $u=u(X)=u(t,x)$, we use
$D_i u$ for $\partial u/\partial x_i$
while we use $u_t$ (or sometimes $D_t u$) for $\partial u/\partial t$.
We also write $Du$ (or sometimes $D_x u$) for the vector
$(D_1 u,\ldots,D_n u)$.
If $u$ is a function in a set $Q\subset\bR^{n+1}$, we denote
\begin{equation*}
[u]_{C^{\mu,\mu/2}(Q)}:=\sup_{\substack{X\neq Y\\ X,Y\in Q}}
\frac{|u(X)-u(Y)|}{|X-Y|_p^\mu},\quad\text{where }\mu\in(0,1].
\end{equation*}
For a Lebesgue measurable set $Q\subset \bR^{n+1}$ (resp. $S\subset \bR^n$),
we write $\abs{Q}$ (resp. $\abs{S}$)
for the Lebesgue measure of the set $Q$ (resp. $S$).
For $u\in L^1(Q)$ (resp. $u\in L^1(S)$)
we use the notation $\dashint_Q u=\frac{1}{\abs{Q}}\int_Q u$
(resp. $\dashint_S u = \frac{1}{\abs{S}} \int_S u$).

\subsection{Function Spaces}
We follow the notation of \cite{LSU} with a slight variation.
For $\U\subset\bR^{n+1}$, we write $\U(t_0)$ for the
set of all points $(t_0,x)$ in $\U$ and $I(\U)$ for
the set of all $t$ such that $\U(t)$ is nonempty.
We denote
\begin{equation*}
\tri{u}_{\U}^2= \|Du\|_{L^{2}(\U)}^2
+\esssup\limits_{t\in I(\U)}\|u(t,\cdot)\|_{L^2(\U(t))}^2.
\end{equation*}
For $\vec{u}=(u^1,\ldots,u^N)$, we write
$\tri{\vec{u}}_{\U}^2 := \sum_{i=1}^N \tri{u^i}_{\U}^2$ so that
\begin{equation*}
\tri{\vec{u}}_{\U}^2 = \|D\vec{u}\|_{L^{2}(\U)}^2
+\esssup\limits_{t\in I(\U)}\|\vec{u}(t,\cdot)\|_{L^2(\U(t))}^2.
\end{equation*}
In the rest of this subsection, we shall denote by $Q$ the cylinder
$(a,b)\times\Omega$,
where $-\infty<a<b<\infty$ and $\Omega$ is an open connected
(possibly unbounded) set in $\bR^n$.
We denote by $W^{1,0}_2(Q)$ the Hilbert space with the inner product
\begin{equation*}
\ip{u,v}_{W^{1,0}_2(Q)}:=\int_Q uv+\sum_{k=1}^n \int_Q D_k u D_k v
\end{equation*}
and by $W^{1,1}_2(Q)$ the Hilbert space with the inner product
\begin{equation*}
\ip{u,v}_{W^{1,1}_2(Q)}:=\int_Q uv+\sum_{k=1}^n \int_Q D_k u D_k v
+\int_Q u_t v_t.
\end{equation*}
We define $V_2(Q)$ as the Banach space consisting of all elements
of $W^{1,0}_2(Q)$ having a finite norm $\norm{u}_{V_2(Q)}:= \tri{u}_{Q}$
and $V^{1,0}_2(Q)$ as the Banach space consisting of all elements
of $V_2(Q)$ that are continuous in $t$ in the norm of $L^2(\Omega)$, with
the norm
\begin{equation*}  
\tri{u}_{Q} = \left(\norm{D u}_{L^2(Q)}^2
+\max\limits_{a\leq t\leq b}\norm{u(t,\cdot)}_{L^2(\Omega)}^2\right)^{1/2}.
\end{equation*}
The continuity in $t$ of a function $u(t,x)$ in the norm of $L^2(\Omega)$
means that 
\begin{equation*}
\lim\limits_{h\to 0}\norm{u(t+h,\cdot)-u(t,\cdot)}_{L^2(\Omega)}=0.
\end{equation*}
The space $V^{1,0}_2(Q)$ is obtained by completing
the set $W^{1,1}_2(Q)$ in the norm of $V_2(Q)$.

We write $C^\infty_{c,p}(Q)$ for the set of all functions
$u\in C^\infty(\overline Q)$
with compact supports in $[a,b]\times\Omega$
while $C^\infty_c(\U)$ denotes the set of all infinitely differentiable
functions with compact supports in $\U$.
We denote by $\rW^{1,0}_2(Q)$ and $\rW^{1,1}_2(Q)$ the closure
of $C^\infty_{c,p}(Q)$ in the Hilbert spaces $W^{1,0}_2(Q)$ and
$W^{1,1}_2(Q)$, respectively.
We define $\rV_2(Q):=V_2(Q)\cap \rW^{1,0}_2(Q)$ and
$\rV^{1,0}_2(Q)=V^{1,0}_2(Q)\cap \rW^{1,0}_2(Q)$.
It is routine to check that $\rV_2(Q)$ and $\rV^{1,0}_2(Q)$ are
subspaces of the Banach spaces $V_2(Q)$ and $V^{1,0}_2(Q)$, respectively.
By a well known embedding theorem (see e.g., \cite[\S II.3]{LSU}),
we have
\begin{equation} \label{eqn:2.2}
\norm{u}_{L^{2+4/n}(Q)} \leq C(n) \tri{u}_{Q}
\quad\forall u\in \rV_2(Q).
\end{equation}
When $\U$ is an infinite cylinder (i.e., $(a,\infty)\times\Omega$,
$(-\infty,b)\times\Omega$, or $(-\infty,\infty)\times\Omega$),
we say that $u\in V_2(\U)$ if $u\in V_2(\U_T)$ for all $T>0$,
where $\U_T:=\U\cap(-T,T)\times\bR^n$, and
$\tri{u}_{\U}<\infty$.
Similarly, we say that $u\in \rV_2(\U)$
(resp. $V^{1,0}_2(\U)$, $\rV^{1,0}_2(\U)$)
if $u\in \rV_2(\U_T)$
(resp. $V^{1,0}_2(\U_T)$, $\rV^{1,0}_2(\U_T)$) for all $T>0$
and $\tri{u}_\U<\infty$.
Also, we write $u\in C^\infty_{c,p}(\U)$ if $u\in C^\infty_{c,p}(\U_T)$
for all $T>0$.
For a sequence $\set{u_k}_{k=1}^\infty$ in $V_2(\U)$, we say that
$u_k \wto u$ ``very weakly'' in $V_2(\U)$ for some $u\in V_2(\U)$ if
$u_k \wto u$ weakly in $W^{1,0}_2(\U_T)$ for all $T>0$.
If $\set{u_k}_{k=1}^\infty$ is a bounded sequence in $V_2(\U)$
(resp. $\rV_2(\U)$), then there exists a subsequence
$\{u_{k_j}\}_{j=1}^\infty\subseteq \{u_k\}_{k=1}^\infty$
and $u\in V_2(\U)$ (resp. $\rV_2(\U)$)
such that $u_{k_j}\wto u$ ``very weakly'' in $V_2(\U)$ (see Appendix for the proof).

The space $W^{1,q}_x(\U)$ ($1\leq q <\infty$)
denotes the Banach space consisting of
functions $u\in L^q(\U)$ with weak derivatives
$D_\alpha u \in L^q(\U)$ ($\alpha=1,\ldots,n$) with the norm
\begin{equation*}  
\norm{u}_{W^{1,q}_x(\U)}=\norm{u}_{L^q(\U)}+\norm{Du}_{L^q(\U)}.
\end{equation*}
We write $u \in L^\infty_c(\U)$ if $u \in L^\infty(\U)$ has a compact
support in $\U$.

\subsection{Strongly parabolic systems}
Throughout this article, the summation convention over repeated indices
are assumed.
Let $\cL=\partial_t-L$ be a second order parabolic operator of divergence type
acting on vector valued functions
$\vec{u}=(u^1,\ldots,u^N)^T$ ($N\ge 1$) defined on
an open subset of $\bR^{n+1}$ in the following way:
\begin{equation}
\label{eqn:P-01}
\cL\vec{u}=\vec{u}_t-L\vec{u}
:=\vec{u}_t-D_\alpha (\vec{A}^{\alpha\beta}\,D_\beta \vec{u}),
\end{equation}
where $\vec{A}^{\alpha\beta}$ ($\alpha,\beta=1,\ldots, n$)
are $N$ by $N$ matrix valued functions
with components $(A^{\alpha\beta}_{ij})_{i,j=1}^N$ defined on $\bR^{n+1}$
satisfying the strong parabolicity condition, i.e.,
there is a number $\lambda>0$ such that
\begin{equation}
\label{eqn:P-02}
A^{\alpha\beta}_{ij}(X)\xi^j_\beta\xi^i_\alpha
\ge \lambda \abs{\vec{\xi}}^2
:=\lambda\sum_{i=1}^N\sum_{\alpha=1}^n|\xi^i_\alpha|^2
\quad\forall X\in\bR^{n+1}.
\end{equation}
We also assume that $A^{\alpha\beta}_{ij}$ are bounded, i.e.,
there is a number $\Lambda>0$ such that
\begin{equation}
\label{eqn:P-03}
\sum_{i,j=1}^N\sum_{\alpha,\beta=1}^n
|A^{\alpha\beta}_{ij}(X)|^2\le \Lambda^2 \quad\forall X\in\bR^{n+1}.
\end{equation}
If we write \eqref{eqn:P-01} component-wise, then we have
\begin{equation*}
(\cL \vec{u})^i= u^i_t-D_\alpha (A^{\alpha\beta}_{ij} D_\beta u^j)
\quad \forall i=1,\ldots,N.
\end{equation*}
The transpose operator $\cLt$ of $\cL$ is defined by
\begin{equation*}    
\cLt\vec{u}=-\vec{u}_t-{}^t\!L\vec{u}
:=-\vec{u}_t-D_\alpha ({}^t\!\vec{A}^{\alpha\beta} D_\beta \vec{u}),
\end{equation*}
where ${}^t\!\vec{A}^{\alpha\beta}=(\vec{A}^{\beta\alpha})^T$
(i.e., ${}^t\!A^{\alpha\beta}_{ij}=A^{\beta\alpha}_{ji}$).
Note that the coefficients ${}^t\!A^{\alpha\beta}_{ij}$ also satisfy
\eqref{eqn:P-02} and \eqref{eqn:P-03}
with the same constants $\lambda, \Lambda$.

\subsection{Weak solutions}
For $\vec f, \vec g_\alpha \in L^1_{loc}(\U)^N$ ($\alpha=1,\ldots,n$),
we say that $\vec{u}$ is a weak solution of
$\cL \vec{u}=\vec{f}+D_\alpha\vec{g}_\alpha$
in $\U$ if $\vec{u}\in V_2(\U)^N$ and satisfies
\begin{equation}  \label{eqn:E-71}
-\int_{\U} u^i\phi^i_t+
\int_{\U} A^{\alpha\beta}_{ij}D_\beta u^j D_\alpha\phi^i=
\int_{\U} f^i \phi^i- \int_{\U} g^i_\alpha D_\alpha\phi^i
\quad\forall \vec{\phi}\in C^\infty_c(\U)^N.
\end{equation}
Similarly, we say that $\vec{u}$ is a weak solution of
$\cLt \vec{u}=\vec{f}+D_\alpha\vec{g}_\alpha$
in $\U$ if $\vec{u}\in V_2(\U)^N$ and satisfies
\begin{equation}  \label{eqn:E-71b}
\int_{\U} u^i\phi^i_t+
\int_{\U}{}^t\!A^{\alpha\beta}_{ij}D_\beta u^j D_\alpha\phi^i=
\int_{\U} f^i \phi^i- \int_{\U} g^i_\alpha D_\alpha\phi^i
\quad\forall \vec{\phi}\in C^\infty_c(\U)^N.
\end{equation}
By a weak solution in $\rV^{1,0}_2((a,b)\times\Omega)^N$
($-\infty<a<b<\infty$) of the problem
\begin{equation} \label{cauchy}
\left\{\begin{array}{l l}
\cL \vec{u}=\vec{f}\\
\vec{u}(a,\cdot)= \vec{g},\end{array}\right.
\end{equation}
we mean a function $\vec{u}(t,x)$ that belongs
to $\rV^{1,0}_2((a,b)\times\Omega)^N$ and
satisfying for all $t_1$ in $[a,b]$ the identity
\begin{align}
\nonumber
\int_\Omega u^i(t_1,\cdot)\phi^i(t_1,\cdot)
&-\int_a^{t_1}\!\!\!\int_\Omega u^i\phi^i_t
+\int_a^{t_1}\!\!\!\int_\Omega A^{\alpha\beta}_{ij}D_\beta u^j D_\alpha\phi^i\\
\label{eq:c01}
&\qquad=\int_a^{t_1}\!\!\!\int_\Omega f^i\phi^i+\int_\Omega g^i \phi^i(a,\cdot)
\quad\forall \vec{\phi}\in C^\infty_{c,p}((a,b)\times\Omega)^N.
\end{align}
Similarly, by a weak solution in $\rV^{1,0}_2((a,b)\times\Omega)^N$
of the (backward) problem
\begin{equation} \label{cauchy2}
\left\{\begin{array}{l l}
\cLt \vec{u}=\vec{f}\\
\vec{u}(b,\cdot)= \vec{g},\end{array}\right.
\end{equation}
we mean a function $\vec{u}(t,x)$ that belongs to
$\rV^{1,0}_2((a,b)\times\Omega)^N$ and satisfies for all $t_1$ in $[a,b]$
the identity
\begin{align}
\nonumber
\int_\Omega u^i(t_1,\cdot)\phi^i(t_1,\cdot)
&+\int^b_{t_1}\!\!\int_\Omega u^i\phi^i_t
+\int^b_{t_1}\!\!\int_\Omega {}^t\!A^{\alpha\beta}_{ij}D_\beta u^j
D_\alpha\phi^i\\
\label{eq:c02}
&\qquad=\int^b_{t_1}\!\!\int_\Omega f^i\phi^i+\int_\Omega g^i \phi^i(b,\cdot)
\quad\forall \vec{\phi}\in C^\infty_{c,p}((a,b)\times\Omega)^N.
\end{align}
We say that $\vec u$ is a weak solution in
$\rV^{1,0}_2((a,\infty)\times\Omega)^N$ of the problem \eqref{cauchy}
if $\vec u$ is a weak solution in $\rV^{1,0}_2((a,b)\times\Omega)$
of the problem \eqref{cauchy} for all $b>a$ and
$\tri{\vec u}_{(a,\infty)\times\Omega}<\infty$.
Similarly, we say that $\vec u$ is a weak solution in
$\rV^{1,0}_2((-\infty,b)\times\Omega)^N$ of the problem \eqref{cauchy2}
if $\vec u$ is a weak solution in $\rV^{1,0}_2((a,b)\times\Omega)$
of the problem \eqref{cauchy2} for all $a<b$ and
$\tri{\vec u}_{(-\infty,b)\times\Omega}<\infty$.

\begin{lemma} \label{lem13.1}
Assume that $\Omega$ is an open connected set in $\bR^n$,
$\vec{f}\in L^\infty_c(\bR\times\Omega)^N$, and $\vec{g}\in L^2(\Omega)^N$.
Then, 
there exists a unique weak solution in $\rV^{1,0}_2((a,\infty)\times\Omega)^N$
(resp. $\rV^{1,0}_2((-\infty,b)\times\Omega)^N$)
of the problem \eqref{cauchy} (resp. \eqref{cauchy2}).
\end{lemma}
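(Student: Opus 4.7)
The plan is to establish the forward Cauchy problem \eqref{cauchy}; the backward problem \eqref{cauchy2} then follows by the time reversal $\tilde{\vec u}(t,x):=\vec u(-t,x)$, under which $\cLt$ with coefficients $\vec A^{\alpha\beta}(t,x)$ becomes $\cL$ with coefficients ${}^t\!\vec A^{\alpha\beta}(-t,x)$, and the bounds \eqref{eqn:P-02}--\eqref{eqn:P-03} are preserved with the same constants $\lambda,\Lambda$. The strategy is a double exhaustion: first fix a bounded $\Omega'\Subset\Omega$ and a finite terminal time $b$, construct the solution on the bounded cylinder $(a,b)\times\Omega'$ by classical means, then let $\Omega'\uparrow\Omega$ and $b\to\infty$ along a diagonal sequence, using a uniform energy estimate powered crucially by the compact support of $\vec f$.

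For bounded $\Omega'\Subset\Omega$ and $b>a$, existence and uniqueness of a weak solution $\vec u_{\Omega',b}\in \rV^{1,0}_2((a,b)\times\Omega')^N$ of \eqref{cauchy} with initial datum $\vec g|_{\Omega'}$ is classical: one runs Galerkin with a basis of $\rW^{1,0}_2(\Omega')^N$ (say, Dirichlet eigenfunctions of $-\Delta$ on $\Omega'$) and invokes Steklov averaging to test \eqref{eq:c01} against $\vec u_{\Omega',b}$ itself; see \cite[Ch.~III]{LSU}. The strong parabolicity \eqref{eqn:P-02} then gives the pointwise-in-$t$ energy inequality
\begin{equation*}
\tfrac12 \tfrac{d}{dt}\norm{\vec u(t,\cdot)}^2_{L^2(\Omega')}+\lambda \norm{D\vec u(t,\cdot)}^2_{L^2(\Omega')}\le \norm{\vec f(t,\cdot)}_{L^2(\Omega')}\norm{\vec u(t,\cdot)}_{L^2(\Omega')}.
\end{equation*}
Since $\operatorname{supp}\vec f\subset [a,T_0]\times K$ for some $T_0<\infty$ and some compact $K\subset\Omega$, the norm $\norm{\vec u(t,\cdot)}_{L^2(\Omega')}$ is controlled on $[a,T_0]$ by Gronwall and is then non-increasing for $t>T_0$; combined with the integrated energy inequality this yields
\begin{equation*}
\tri{\vec u_{\Omega',b}}^2_{(a,b)\times\Omega'}\le C(\lambda, T_0-a)\bigl(\norm{\vec g}_{L^2(\Omega)}^2+\norm{\vec f}_{L^2(\bR\times\Omega)}^2\bigr),
\end{equation*}
whose right-hand side is independent of $\Omega'$ and of $b\ge T_0$.

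Next I exhaust $\Omega=\bigcup_k\Omega_k$ by bounded open sets $\Omega_k\Subset\Omega_{k+1}$, extend each $\vec u_{\Omega_k,b}$ by zero to $(a,b)\times\Omega$ (which preserves membership in $\rV^{1,0}_2$ since $\Omega_k\Subset\Omega$), and let $k\to\infty$ and $b\to\infty$ along a diagonal subsequence. By the uniform estimate above and the compactness of bounded sets in $V_2$ for the ``very weak'' topology (recorded in the Appendix), I extract a limit $\vec u\in V_2((a,\infty)\times\Omega)^N$. The identity \eqref{eq:c01} passes to this limit for every $\vec\phi\in C^\infty_{c,p}((a,b')\times\Omega)^N$, because such a $\vec\phi$ is eventually supported in $(a,b')\times\Omega_k$; the vanishing trace on $\partial\Omega$ is preserved because $\rW^{1,0}_2$ is weakly closed in $W^{1,0}_2$. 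The step requiring most care — the \emph{main obstacle} — is upgrading $\vec u\in V_2$ to $\vec u\in \rV^{1,0}_2$, i.e., showing that $t\mapsto\vec u(t,\cdot)$ is continuous into $L^2(\Omega)$ so that the initial trace is well defined and equals $\vec g$. I would do this via an equation-based $L^2$-equicontinuity estimate: inserting into \eqref{eq:c01} test functions localized to a thin time strip $(t,t+h)$ yields $\norm{\vec u_{\Omega_k,b}(t+h,\cdot)-\vec u_{\Omega_k,b}(t,\cdot)}_{L^2(\Omega)}\le C h^{1/2}$ uniformly in $k$ and $b$, and this bound descends to the weak limit, giving both $\vec u\in\rV^{1,0}_2$ and $\vec u(a,\cdot)=\vec g$.

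Uniqueness is the standard energy argument: if $\vec u\in \rV^{1,0}_2((a,\infty)\times\Omega)^N$ satisfies \eqref{eq:c01} with $\vec f=0$ and $\vec g=0$, then Steklov averaging legitimizes $\vec u$ itself as an admissible test function in \eqref{eq:c01}, and \eqref{eqn:P-02} yields
\begin{equation*}
\tfrac12\norm{\vec u(t_1,\cdot)}^2_{L^2(\Omega)}+\lambda\int_a^{t_1}\!\!\int_\Omega\abs{D\vec u}^2\le 0\quad\text{for every }t_1>a,
\end{equation*}
forcing $\vec u\equiv 0$.
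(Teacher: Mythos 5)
Your strategy is genuinely different from the paper's. The paper simply invokes the Galerkin construction in \cite[\S III.4]{LSU} and the energy-uniqueness argument in \cite[\S III.3]{LSU}, with the one-line remark that those proofs happen not to use boundedness of $\Omega$ (because a dense sequence in $\rW^{1,2}(\Omega)$ serves as a Galerkin basis on any open $\Omega$). You instead build the solution by a double exhaustion (bounded $\Omega'\Subset\Omega$ and finite $b$), which makes the unboundedness explicit. The uniform energy estimate you derive from the compact temporal support of $\vec f$ is correct, the extension-by-zero and the ``very weak'' compactness step (the Appendix lemma) are fine, and passing the integral identity to the limit against fixed test functions is fine.

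However, there is a genuine gap at the step you yourself flag as the crux: you cannot obtain
\begin{equation*}
\norm{\vec u_{\Omega_k,b}(t+h,\cdot)-\vec u_{\Omega_k,b}(t,\cdot)}_{L^2(\Omega)}\le C h^{1/2}
\quad\text{uniformly in }k,b
\end{equation*}
from the weak formulation, and in fact this estimate is false for $L^2$ initial data. Testing \eqref{eq:c01} on the strip $(t,t+h)$ with \emph{time-independent} $\vec\phi$ yields only an $H^{-1}$-type bound
$\abs{\int_\Omega(\vec u(t+h)-\vec u(t))\cdot\vec\phi}
\le C h^{1/2}\norm{\vec\phi}_{H^1_0}$,
i.e.\ H\"older-$1/2$ equicontinuity into $H^{-1}$, not into $L^2$. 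Testing instead with (Steklov averages of) $\vec u$ itself controls the difference of \emph{squared norms},
$\tfrac12\big|\norm{\vec u(t+h)}^2_{L^2}-\norm{\vec u(t)}^2_{L^2}\big|
\le \Lambda\int_t^{t+h}\!\!\int_\Omega\abs{D\vec u_k}^2+\dots$,
whose right-hand side tends to $0$ as $h\to 0$ only by equi-integrability of $\abs{D\vec u_k}^2$ over $(a,b)\times\Omega$ — a property that does not follow from the uniform energy bound alone and which, in this scheme, you are effectively trying to prove. Already for the heat semigroup with $u_0\in L^2$, $\norm{e^{(t+h)\Delta}u_0-e^{t\Delta}u_0}_{L^2}$ has no uniform $h^{1/2}$ modulus near $t=0$. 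The correct route to $\vec u\in\rV^{1,0}_2$ combines the $H^{-1}$-H\"older modulus above with weak $L^2$-continuity and convergence of the scalar function $t\mapsto\norm{\vec u(t)}^2_{L^2}$ (so that weak $+$ norm $\Rightarrow$ strong), and it is exactly this bookkeeping that the Galerkin scheme in \cite[\S III.4]{LSU} carries out internally — on \emph{any} $\Omega$ — so the exhaustion you perform is both unnecessary and the source of the difficulty.
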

\begin{proof}
See e.g., \cite[\S III.4]{LSU} for the existence and 
\cite[\S III.3]{LSU} for the uniqueness.
We point out that the proof does not require the boundedness of $\Omega$.
\end{proof}

\subsection{Property (PH)} \label{sec:PH}
We say that the operator $\cL$ satisfies the property (PH) if
there exist $\mu_0\in (0,1]$, $R_c\in (0,\infty]$, and $C_0>0$
such that all weak solutions $\vec{u}$
of $\cL\vec u=0$ in $Q_R^-=Q_R^-(X_0)$ with $R<R_c$ satisfy
\begin{equation} \label{eq3.43}
\int_{Q_\rho^-}\abs{D\vec u}^2\leq
C_0\left(\frac{\rho}{r}\right)^{n+2\mu_0}\int_{Q_r^-}\abs{D\vec{u}}^2
\quad \forall 0<\rho<r\leq R.
\end{equation}
Similarly, we say that the operator $\cLt$ satisfies
the property (PH) if all weak solutions $\vec u$ of $\cLt \vec u=0$
in $Q_R^+=Q_R^+(X_0)$ with $R<R_c$ satisfy
\begin{equation} \label{eq3.43b}
\int_{Q_\rho^+}\abs{D\vec{u}}^2\leq
C_0\left(\frac{\rho}{r}\right)^{n+2\mu_0}\int_{Q_r^+}\abs{D\vec{u}}^2
\quad \forall 0<\rho<r\leq R.
\end{equation}

Now, we present examples of strongly parabolic systems
satisfying the property (PH). Lemma~\ref{lem1} states that
\textit{almost diagonal} parabolic systems satisfy the property (PH)
with $R_c=\infty$ and Lemma~\ref{lem:V-03} shows that parabolic
systems with coefficients which belong to $\VMO_x$ (see below for
the definition) satisfy the property (PH) for some $R_c<\infty$.
In \cite{Kim}, it is shown that if $n=2$ and the coefficients of $\cL$ are
independent of $t$, then $\cL$ and $\cLt$ satisfy the property (PH)
with $R_c=\infty$.

\begin{lemma} \label{lem1}
Let $(a^{\alpha\beta}(X))_{\alpha,\beta=1}^n$ be coefficients satisfying
the following conditions:
There are constants $\lambda_0, \Lambda_0>0$ such that for all $X\in\bR^{n+1}$
\begin{equation*}
a^{\alpha\beta}(X)\xi_\beta\xi_\alpha\ge
\lambda_0\abs{\xi}^2 \quad\forall\xi\in\bR^n;\quad
\sum_{\alpha,\beta=1}^n\abs{a^{\alpha\beta}(X)}^2\le \Lambda_0^2.
\end{equation*}
Then, there exists $\epsilon_0=\epsilon_0(n,\lambda_0,\Lambda_0)>0$
such that if
\begin{equation*}
\epsilon^2(X):=
\sum_{i,j=1}^N\sum_{\alpha,\beta=1}^n
\abs{A^{\alpha\beta}_{ij}(X)-a^{\alpha\beta}(X)\delta_{ij}}^2< \epsilon_0^2
\quad\forall X\in\bR^{n+1},
\end{equation*}
where $\delta_{ij}$ is the Kronecker delta symbol,
then the operator $\cL$ associated with the coefficients
$A^{\alpha\beta}_{ij}$ and its transpose $\cLt$ satisfy
the property (PH) with $\mu_0=\mu_0(n,\lambda_0,\Lambda_0)$,
$C_0=C_0(n,N,\lambda_0,\Lambda_0)$, and $R_c=\infty$.
\end{lemma}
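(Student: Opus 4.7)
The approach is a standard freezing/perturbation argument. Since $(A^{\alpha\beta}_{ij})$ differs in $L^\infty$ from the diagonal operator $(a^{\alpha\beta}\delta_{ij})$ by at most $\epsilon_0$, a weak solution of $\cL\vec u = 0$ is, up to a small error, a solution of $N$ uncoupled scalar equations $v^i_t - D_\alpha(a^{\alpha\beta} D_\beta v^i) = 0$. For this scalar equation Nash's theorem directly yields property (PH) with $R_c=\infty$, some exponent $\mu_1 = \mu_1(n,\lambda_0,\Lambda_0) \in (0,1]$, and a constant $C_1 = C_1(n,\lambda_0,\Lambda_0)$; this is exactly the fact already noted in the introduction.

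Fix a weak solution $\vec u \in V_2(Q_R^-(X_0))^N$ of $\cL\vec u = 0$ and any $r \in (0, R]$. Set $g^i_\alpha := (A^{\alpha\beta}_{ij} - a^{\alpha\beta}\delta_{ij})D_\beta u^j$, so that $\abs{\vec g} \leq \epsilon_0\abs{D\vec u}$ pointwise on $Q_r^-(X_0)$. By the standard existence theory (cf.\ Lemma~\ref{lem13.1}, whose proof does not require boundedness of the base), there is a unique $\vec w \in \rV^{1,0}_2(Q_r^-(X_0))^N$ solving the scalar Cauchy--Dirichlet problem
\[
w^i_t - D_\alpha(a^{\alpha\beta} D_\beta w^i) = D_\alpha g^i_\alpha \text{ in } Q_r^-(X_0), \qquad \vec w\big|_{\partial_p Q_r^-(X_0)} = 0.
\]
Testing against $\vec w$ itself (via Steklov averaging in time to legitimize $\vec w$ as a test function) and using $a^{\alpha\beta}\xi_\alpha\xi_\beta \geq \lambda_0\abs{\xi}^2$ together with Young's inequality yields the energy bound
\[
\int_{Q_r^-(X_0)} \abs{D\vec w}^2 \leq C(\lambda_0)\,\epsilon_0^2 \int_{Q_r^-(X_0)} \abs{D\vec u}^2.
\]
The remainder $\vec v := \vec u - \vec w$ has components satisfying the scalar equation $v^i_t - D_\alpha(a^{\alpha\beta} D_\beta v^i) = 0$, so the scalar version of property (PH) applies to each $v^i$ and gives
\[
\int_{Q_\rho^-(X_0)} \abs{D\vec v}^2 \leq C_1 (\rho/r)^{n+2\mu_1} \int_{Q_r^-(X_0)} \abs{D\vec v}^2 \quad \text{for } 0 < \rho \leq r.
\]

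Combining these bounds via $\abs{D\vec u}^2 \leq 2\abs{D\vec v}^2 + 2\abs{D\vec w}^2$ on both $Q_\rho^-$ and $Q_r^-$ produces
\[
\int_{Q_\rho^-(X_0)} \abs{D\vec u}^2 \leq C\bigl[(\rho/r)^{n+2\mu_1} + \epsilon_0^2\bigr] \int_{Q_r^-(X_0)} \abs{D\vec u}^2 \quad\text{for all } 0 < \rho \leq r \leq R,
\]
with $C = C(n, N, \lambda_0, \Lambda_0)$. A standard Morrey--Campanato-type iteration lemma then yields, once $\epsilon_0$ is chosen small enough depending only on $n, N, \lambda_0, \Lambda_0$, the desired decay with some $\mu_0 \in (0, \mu_1)$ and $C_0 = C_0(n, N, \lambda_0, \Lambda_0)$, i.e.\ property (PH) for $\cL$ with $R_c = \infty$. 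The argument for $\cLt$ is identical: ${}^t\!A^{\alpha\beta}_{ij} = A^{\beta\alpha}_{ji}$ is an $\epsilon_0$-perturbation of the scalar diagonal operator $a^{\beta\alpha}\delta_{ij}$ (still elliptic with the same constants $\lambda_0,\Lambda_0$), so running the same decomposition on forward-in-time cylinders $Q_r^+$ delivers (PH) for $\cLt$ with the same parameters. The only nonroutine moments are the Steklov averaging required for the energy estimate and the quantitative smallness of $\epsilon_0$ demanded by the iteration lemma; both are classical, and the latter is what forces $\mu_0$ to be strictly less than the scalar Nash exponent $\mu_1$.
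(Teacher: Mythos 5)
Your argument is correct and is the standard perturbation proof that the paper leaves to the cited reference [HofKim2, Proposition~2.1]; it is in fact the same decomposition $\vec u=\vec v+\vec w$ (with $\vec w$ absorbing the off-diagonal error via a Cauchy--Dirichlet energy estimate and $\vec v$ solving the decoupled scalar equations) that the paper writes out explicitly in the harder $\VMO_x$ case, Lemma~\ref{lem:V-03}. The only point you gloss over, harmlessly, is that passing from Nash's interior H\"older estimate to the Dirichlet-integral decay \eqref{eq3.43} for the scalar equation requires a Caccioppoli step together with the parabolic Poincar\'e inequality of Lemma~\ref{lem3.1}, as in the chain of inequalities in the proof of Lemma~\ref{lem3}.
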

\begin{proof}
See e.g., \cite[Proposition 2.1]{HofKim2}.
\end{proof}

For a measurable function $f=f(X)=f(t,x)$ defined on $\bR^{n+1}$,
we set
\begin{equation*}
\omega_\delta(f):=\sup_{X=(t,x)\in\bR^{n+1}}\sup_{r\le \delta}
\frac{1}{\abs{Q_r(X)}}
\int_{t-r^2}^{t+r^2}\!\!\!\int_{B_r(x)} \abs{f(y,s)-\bar{f}_{x,r}(s)}\,dy\,ds
\quad\forall \delta>0,
\end{equation*}
where $\bar{f}_{x,r}(s)=\dashint_{B_r(x)}f(s,\cdot)$.
We say that $f$ belongs to $\VMO_x$
if $\lim_{\delta\to 0} \omega_\delta(f)=0$.
Note that $\VMO_x$ is a strictly larger class than the classical $\VMO$ space.
In particular, $\VMO_x$ contains all functions uniformly continuous in $x$
and measurable in $t$;
see \cite{Krylov}.

\begin{lemma}\label{lem:V-03}
Let the coefficients of the operator $\cL$ in \eqref{eqn:P-01}
satisfy the conditions \eqref{eqn:P-02} and \eqref{eqn:P-03}. If the
coefficients belong to $\VMO_x$, then the operators $\cL$ and $\cLt$
satisfy the property (PH).
\end{lemma}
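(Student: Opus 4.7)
The plan is a Campanato-type perturbation argument in which I freeze the coefficients in the spatial variable and control the resulting error through the $\VMO_x$ modulus. Fix $X_0=(t_0,x_0)$ and a small $r>0$, let $\vec u$ be a weak solution of $\cL\vec u=0$ in $Q_r^-=Q_r^-(X_0)$, and define the $x$-averages $\bar A^{\alpha\beta}_{ij}(t):=\dashint_{B_r(x_0)}A^{\alpha\beta}_{ij}(t,\cdot)$, with $\bar\cL$ the corresponding operator whose coefficients depend only on $t$. Decompose $\vec u=\vec v+\vec w$ on $Q_r^-$ by taking $\vec v$ to solve the initial-boundary value problem $\bar\cL\vec v=0$ in $Q_r^-$ with $\vec v=\vec u$ on $\partial_p Q_r^-$ (well-posed by Lemma~\ref{lem13.1}); then $\vec w$ has zero parabolic boundary data and satisfies $\bar\cL\vec w=D_\alpha[(\bar A^{\alpha\beta}_{ij}-A^{\alpha\beta}_{ij})D_\beta u^j]$.

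Because $\bar A$ is independent of $x$, the tangential difference quotients of $\vec v$ solve the same system, so iterated Caccioppoli estimates produce arbitrarily high $x$-regularity of $\vec v$ and yield the classical $C^{1,\alpha}$-type decay
\begin{equation*}
\int_{Q_\rho^-}|D\vec v|^2\le C\Bigl(\frac{\rho}{r}\Bigr)^{n+2}\int_{Q_r^-}|D\vec v|^2,\qquad 0<\rho\le r.
\end{equation*}
Since $\vec w$ vanishes on $\partial_p Q_r^-$, testing its equation against itself and invoking parabolicity gives $\int_{Q_r^-}|D\vec w|^2\le C\int_{Q_r^-}|A-\bar A|^2|D\vec u|^2$. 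To convert the bare $\VMO_x$ condition (an $L^1$-oscillation statement) into a usable $L^2$-weighted bound, I invoke the parabolic Meyers--Gehring self-improving integrability: there exists $p=p(n,N,\lambda,\Lambda)>1$ such that
\begin{equation*}
\Bigl(\dashint_{Q_r^-}|D\vec u|^{2p}\Bigr)^{1/p}\le C\dashint_{Q_{2r}^-}|D\vec u|^2.
\end{equation*}
Combining this with H\"older's inequality, the trivial bound $|A-\bar A|\le 2\Lambda$, and the definition of $\omega_r(A)$, one obtains
\begin{equation*}
\int_{Q_r^-}|A-\bar A|^2|D\vec u|^2\le C\,\omega_{r}(A)^{2\sigma}\int_{Q_{2r}^-}|D\vec u|^2
\end{equation*}
for some $\sigma=\sigma(p)>0$.

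Adding the two contributions via $|D\vec u|^2\le 2(|D\vec v|^2+|D\vec w|^2)$ yields, for $0<\rho<r$,
\begin{equation*}
\int_{Q_\rho^-}|D\vec u|^2\le C\Bigl[\Bigl(\frac{\rho}{r}\Bigr)^{n+2}+\omega_{r}(A)^{2\sigma}\Bigr]\int_{Q_{2r}^-}|D\vec u|^2.
\end{equation*}
Choosing $R_c$ small enough that $\omega_{r}(A)$ is tiny for every $r<R_c$, the standard Campanato--Giaquinta iteration lemma absorbs the perturbative term and produces \eqref{eq3.43} with some $\mu_0=\mu_0(n,N,\lambda,\Lambda,A)\in(0,1)$. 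The corresponding statement for $\cLt$ follows from the same argument after time-reversal $t\mapsto -t$, which sends backward cylinders to forward cylinders and preserves the $\VMO_x$ class.

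The main technical obstacle is precisely the passage from the bare $\VMO_x$ hypothesis---an $L^1$-oscillation condition---to the effective $L^2$-weighted estimate on $(A-\bar A)D\vec u$. This forces the higher-integrability machinery above, and is also the reason $R_c$ must be taken finite here, in contrast with Lemma~\ref{lem1}, where the pointwise $L^\infty$ closeness $\epsilon<\epsilon_0$ makes the perturbative term small without any self-improvement and therefore permits $R_c=\infty$.
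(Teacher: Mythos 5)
Your proposal is correct and follows essentially the same path as the paper's proof: freezing the coefficients in $x$ via the $B_r(x_0)$-average, decomposing $\vec u=\vec v+\vec w$ into a homogeneous frozen-coefficient piece (with interior $(\rho/r)^{n+2}$ gradient decay from the Appendix-type Caccioppoli argument) and a zero-boundary-data remainder controlled via the energy estimate, invoking the parabolic Meyers--Gehring reverse H\"older estimate of Giaquinta--Struwe to upgrade the $\VMO_x$ modulus to an effective $L^2$-weighted error, and then absorbing the perturbation by the Campanato--Giaquinta iteration lemma after shrinking $R_c$. The only differences are superficial: you solve for $\vec v$ with boundary data $\vec u$ while the paper solves for $\vec w$ with zero boundary data, and your $L^{2p}$ integrability with $p>1$ is the paper's $L^p$ with $p>2$ under relabeling.
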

\begin{proof}
Let $\vec{u}$ be a weak solution of $\cL \vec{u}=0$
in $Q_R^-(X_0)$ with $R<R_c$, where $R_c$ is to be chosen later.
First, note that as a weak solution of a linear strongly parabolic system,
$\vec u$ satisfies the following improved integrability estimates
for some $p=p(n,\lambda,\Lambda)>2$ (see \cite[Theorem 2.1]{GS}):
\begin{equation}\label{eq5.3}
\left(\dashint_{Q_r^-(X_0)}\abs{D\vec{u}}^p\right)^{2/p} \le
C\,\dashint_{Q_{2r}^-(X_0)}\abs{D\vec{u}}^2
\quad \forall r<R/2.
\end{equation}
For a given $r< R/2$, denote
$\bar{\vec A}{}^{\alpha\beta}_{x_0,r}=\bar{\vec A}{}^{\alpha\beta}_{x_0,r}(t)
=\dashint_{B_r(x_0)}{\vec A}^{\alpha\beta}(t,\cdot)$.
We decompose $\vec{u}=\vec{v}+\vec{w}$, where
$\vec{w}$ is the weak solution in $V^{1,0}_2(Q_r^-(X_0))^N$ satisfying
\begin{equation}  \label{eqn:A-10}
\vec{w}_t-D_\alpha\big(\bar{\vec A}{}^{\alpha\beta}_{x_0,r}
D_\beta\vec{w}\big)=
D_\alpha\big((\vec{A}^{\alpha\beta}-\bar{\vec A}{}^{\alpha\beta}_{x_0,r})
D_\beta\vec{u}\big) \quad\text{in } Q_r^-(X_0)
\end{equation}
with zero boundary condition on $\partial_p Q_r^-(X_0)$. By using
$\vec{w}$ itself as a test function in \eqref{eqn:A-10} and then
using H\"older's inequality, \eqref{eqn:P-03}, and \eqref{eq5.3},
we derive
\begin{align}
\nonumber
\int_{Q_r^-(X_0)}\abs{D\vec{w}}^2 &\leq
C\left(\int_{Q_r(X_0)}\abs{\vec{A}-\bar{\vec A}_{x_0,r}}^q\right)^{2/q}
\left(\int_{Q_r^-(X_0)}\abs{D\vec{u}}^p\right)^{2/p}\\
\label{eq5.4}
&\leq C\Lambda^{2(q-1)/q} \omega_r(\vec A)^{2/q}
\int_{Q_{2r}^-(X_0)}\abs{D\vec{u}}^2,\quad \text{where }q=2p/(p-2).
\end{align}
Observe that $\vec{v}=\vec{u}-\vec{w}$ is a weak solution of
\begin{equation*}
\vec{v}_t-D_\alpha\left(\bar{\vec A}{}^{\alpha\beta}_{x_0,r}(t)
D_\beta\vec{v}\right)=0
\quad\text{in }Q_r^-(X_0).
\end{equation*}
Since $\vec v$ is a weak solution of a strongly parabolic system with
coefficients independent of the spatial variables, $\vec v$ possesses
the following interior estimate:
\begin{equation}
\label{eqn:A-11}
\int_{Q_\rho^-(X_0)}\abs{D \vec{v}}^2\leq
C\left(\frac \rho r\right)^{n+2}
\int_{Q_r^-(X_0)}\abs{D \vec{v}}^2 \quad \forall \rho<r,
\end{equation}
for some constants $C=C(n,N,\lambda,\Lambda)$ (see Appendix).
Then, by combining \eqref{eq5.4} and \eqref{eqn:A-11}, we see
that for all $\rho<r<R/2$, we have
\begin{equation*}
\int_{Q_\rho^-(X_0)}\abs{D\vec{u}}^2 \leq
C\left(\frac{\rho}{r}\right)^{n+2}
\int_{Q_{2r}^-(X_0)}\abs{D\vec{u}}^2 +C \omega_r(\vec A)^{2/q}
\int_{Q_{2r}^-(X_0)}\abs{D\vec{u}}^2.
\end{equation*}
If we choose $R_c$ sufficiently small so that $C\omega_{R_c}(\vec A)^{2/q}$
is small, then by a well-known iteration argument we
obtain \eqref{eq3.43} (see e.g., \cite[Lemma~2.1, p. 86]{Giaq83}).
Therefore, we have proved that $\cL$ satisfies (PH).
The proof that $\cLt$ also satisfies (PH) is similar and left to the reader.
\end{proof}

\subsection{Some preliminary lemmas}
\begin{lemma} \label{lem3.1}
There exists a constant $C=C(n,N,\Lambda)>0$ such that if
$\vec{u}$ is a weak solution of $\cL \vec{u}= \vec{f}$ in
$Q_R^-=Q_R^-(X_0)$, then
\begin{equation*}    
\int_{Q_R^-} \abs{\vec{u}-\bar{\vec{u}}_R}^2
\leq C R^2\int_{Q_R^-}\abs{D\vec{u}}^2
+C R^{2-n}\norm{\vec{f}}_{L^1(Q_R^-)}^2,
\end{equation*}
where $\bar{\vec{u}}_R=\dashint_{Q_R^-}\vec{u}$.
Similarly, if $\vec{u}$ is a weak solution of $\cLt \vec{u}= \vec{f}$ in
$Q_R^+$, then
\begin{equation*}    
\int_{Q_R^+} \abs{\vec{u}-\bar{\vec{u}}_R}^2
\leq C R^2\int_{Q_R^+}\abs{D\vec{u}}^2
+C R^{2-n}\norm{\vec{f}}_{L^1(Q_R^+)}^2,
\end{equation*}
where $\bar{\vec{u}}_R=\dashint_{Q_R^+}\vec{u}$.
\end{lemma}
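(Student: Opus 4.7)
The plan is to decompose $\vec u - \bar{\vec u}_R$ into a spatial oscillation at each fixed time plus a temporal oscillation of a suitable smooth spatial average of $\vec u$. The spatial piece is handled by the classical Poincaré inequality on $B_R$, while the temporal piece is controlled by testing the equation against a product test function.

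Fix a nonnegative $\zeta\in C_c^\infty(B_R(x_0))$ with $\int \zeta = 1$, $\norm{\zeta}_\infty \leq CR^{-n}$, and $\norm{D\zeta}_\infty \leq CR^{-n-1}$, and set $\tilde u^i(t) := \int_{B_R(x_0)} \zeta(x) u^i(t,x)\,dx$. Testing \eqref{eqn:E-71} against $\phi^i(t,x) = \psi_\epsilon(t)\zeta(x)$ for a smooth approximation $\psi_\epsilon$ of $\chi_{(t_2,t_1)}$ and letting $\epsilon\to 0$ (regularizing $\vec u$ in $t$ by Steklov averages beforehand to give pointwise meaning to $\tilde{\vec u}(t_1)-\tilde{\vec u}(t_2)$), I would derive, for a.e.\ $t_1, t_2\in(t_0-R^2, t_0)$, the temporal oscillation estimate
\begin{equation*}
\abs{\tilde{\vec u}(t_1) - \tilde{\vec u}(t_2)} \leq \Lambda\norm{D\zeta}_\infty \int_{Q_R^-} \abs{D\vec u} + \norm{\zeta}_\infty \int_{Q_R^-} \abs{\vec f} \leq \frac{C}{R^{n+1}}\int_{Q_R^-}\abs{D\vec u} + \frac{C}{R^n}\int_{Q_R^-}\abs{\vec f},
\end{equation*}
using the bound \eqref{eqn:P-03} on the coefficients.

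Since $\bar{\vec u}_R$ minimizes $\vec c \mapsto \int_{Q_R^-}\abs{\vec u - \vec c}^2$, I may replace it by $\vec c := \tilde{\vec u}(t_*)$ for any fixed $t_* \in (t_0-R^2, t_0)$ and use $(a+b)^2 \leq 2a^2 + 2b^2$ to split
\begin{equation*}
\int_{Q_R^-}\abs{\vec u - \vec c}^2 \leq 2\int_{Q_R^-}\abs{\vec u - \tilde{\vec u}(t)}^2 + 2\int_{Q_R^-}\abs{\tilde{\vec u}(t) - \vec c}^2.
\end{equation*}
The first term is bounded by $CR^2 \int_{Q_R^-}\abs{D\vec u}^2$ via the standard Poincaré inequality applied to $\vec u(t,\cdot)$ on $B_R$ slice by slice (after comparing $\tilde{\vec u}(t)$ with the plain spatial mean, which costs only a constant thanks to $\norm{\zeta}_\infty \leq CR^{-n}$). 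For the second, I would square the oscillation estimate, apply Cauchy–Schwarz to get $\norm{D\vec u}_{L^1(Q_R^-)}^2 \leq \abs{Q_R^-}\norm{D\vec u}_{L^2}^2 \leq CR^{n+2}\norm{D\vec u}_{L^2}^2$, and multiply the resulting $t$-independent bound by the trivial factor $\abs{B_R}\cdot R^2 \sim R^{n+2}$, yielding contributions $CR^2\int_{Q_R^-}\abs{D\vec u}^2$ and $CR^{2-n}\norm{\vec f}_{L^1(Q_R^-)}^2$.

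The main technical point is making the test-function argument for the oscillation estimate rigorous, since $\vec u \in V_2$ has no pointwise time derivative; this is the standard Steklov-average density argument, and care must be taken that all constants depend only on $n$, $N$, and $\Lambda$. The backward statement for $\cLt$ on $Q_R^+$ follows by reversing time.
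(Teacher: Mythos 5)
Your argument is correct and is essentially the standard proof of this parabolic Poincar\'e inequality; the paper itself does not prove the lemma but cites \cite[Lemma 3]{Struwe}, whose proof follows the same scheme you describe (spatial Poincar\'e slice-by-slice plus a temporal oscillation bound for a weighted spatial mean obtained by testing with $\psi(t)\zeta(x)$). The exponent bookkeeping checks out: $|Q_R^-|\sim R^{n+2}$, $\|\zeta\|_\infty\lesssim R^{-n}$, $\|D\zeta\|_\infty\lesssim R^{-n-1}$ together with Cauchy--Schwarz on $\|D\vec u\|_{L^1}$ yield precisely the $CR^2\|D\vec u\|_{L^2}^2 + CR^{2-n}\|\vec f\|_{L^1}^2$ bound with $C=C(n,N,\Lambda)$.
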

\begin{proof}
See e.g., \cite[Lemma 3]{Struwe}.
\end{proof}

\begin{lemma} \label{lem4}
Let $\vec{u}\in L^2(Q_{2R}^-(X_0))^N$ and suppose that there exist  positive
constants $\mu\in (0,1]$ and $N_0$ such that
\begin{equation*}
\int_{Q^-_r(X)}\abs{\vec{u}-\bar{\vec u}_{X,r}}^2\leq N_0^2r^{n+2+2\mu}
\quad \forall X\in Q_R^-(X_0) \quad \forall r\in (0,R),
\end{equation*}
where $\bar{\vec{u}}_{X,r}=\dashint_{Q_r^-(X)}\vec{u}$.
Then, $\vec{u}$ is H\"older continuous in $Q_R^-(X_0)$ and
\begin{equation*}
[\vec{u}]_{C^{\mu,\mu/2}(Q_R^-(X_0))}\leq C(n,N,\mu)N_0.
\end{equation*}
The same is true with $Q^+$ in place of $Q^-$ everywhere.
\end{lemma}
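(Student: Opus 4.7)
This is the parabolic counterpart of the classical Campanato embedding lemma, adapted to backward cylinders $Q_r^-$, so I will follow the standard telescoping-of-averages argument. Throughout, write $\bar{\vec u}_{X,r}:=\dashint_{Q_r^-(X)}\vec u$. The only quantitative ingredient is a doubling estimate for consecutive averages: for any $X\in Q_R^-(X_0)$ and $0<r<R$, the inclusion $Q_{r/2}^-(X)\subset Q_r^-(X)$, with volume ratio $2^{n+2}$, Jensen's inequality, and the hypothesis give
\begin{equation*}
\abs{\bar{\vec u}_{X,r/2}-\bar{\vec u}_{X,r}}^2
\le 2^{n+2}\dashint_{Q_r^-(X)}\abs{\vec u-\bar{\vec u}_{X,r}}^2
\le C(n)\,N_0^2\,r^{2\mu}.
\end{equation*}

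Iterating this and summing the convergent geometric series $\sum_{j\ge 0}2^{-j\mu}$ shows that $\{\bar{\vec u}_{X,r2^{-k}}\}_{k\ge 0}$ is Cauchy in $\bR^N$; its limit $\vec u^{*}(X)$ coincides with $\vec u(X)$ at every Lebesgue point of $\vec u$ (hence a.e.\ on $Q_R^-(X_0)$) and satisfies
\begin{equation*}
\abs{\vec u^{*}(X)-\bar{\vec u}_{X,r}}\le C(n,\mu)\,N_0\,r^{\mu},\qquad 0<r<R.
\end{equation*}
To obtain the H\"older bound for nearby points, take $X=(t,x)$, $Y=(s,y)$ in $Q_R^-(X_0)$ with $t\ge s$ and $r:=\abs{X-Y}_p<R/2$. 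Set $Z:=X$; since $\abs{x-y}\le r$ and $0\le t-s\le r^2$, one checks that $Q_r^-(X)\cup Q_r^-(Y)\subset Q_{2r}^-(Z)$. Bounding each of $\abs{\bar{\vec u}_{X,r}-\bar{\vec u}_{Z,2r}}$ and $\abs{\bar{\vec u}_{Y,r}-\bar{\vec u}_{Z,2r}}$ by $C(n)N_0 r^\mu$ exactly as in the doubling step, and combining with the previous bound via the triangle inequality, yields
\begin{equation*}
\abs{\vec u^{*}(X)-\vec u^{*}(Y)}\le C(n,\mu)\,N_0\,r^{\mu}.
\end{equation*}

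For $r=\abs{X-Y}_p\ge R/2$, I would use convexity of $Q_R^-(X_0)$ (a Cartesian product of an interval with a ball) to build a chain $X=Z_0,\dots,Z_K=Y$ of at most $K=K(n)$ points in $Q_R^-(X_0)$ with consecutive parabolic distances $<R/4$, apply the close-point estimate to each consecutive pair, and absorb the factor $K$ using $(R/4)^{\mu}\le C(\mu)\,r^{\mu}$. The same argument, with obvious sign changes, handles $Q_r^+$. The only delicate point is the choice of the common cylinder in the close-point step: because $Q_r^-$ is backward in time, the anchor $Z$ must be the temporally later of the two points, and this asymmetry is the one spot where parabolic geometry genuinely differs from the elliptic Campanato setting. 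Once that observation is in place, everything reduces to the telescoping doubling inequality.
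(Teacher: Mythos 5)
Your proof is correct and is the standard parabolic Campanato embedding argument. The paper provides no proof of its own, simply citing Lieberman's Lemma 4.3; the telescoping-of-averages route you describe, with the anchor for the overlapping-cylinder step chosen as the temporally later of the two points (the one genuinely parabolic wrinkle), is precisely the argument underlying that reference.
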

\begin{proof}
See e.g., \cite[Lemma 4.3]{Lieberman}.
\end{proof}

\begin{lemma} \label{lem3}
Assume that the operator $\cL$ satisfies the property (PH).
Then, all weak solutions $\vec{u}$ of $\cL\vec{u}=0$ in
$Q_R^-=Q_R^-(X_0)$ with $R<R_c$ satisfy
\begin{equation} \label{eq4.03}
\norm{\vec{u}}_{L^\infty(Q^-_{R/4}(X_0))}
\leq C \left(\dashint_{Q^-_R(X_0)}\abs{\vec{u}}^2\right)^{1/2},
\end{equation}
where $C=C(n,N,\lambda,\Lambda,\mu_0,C_0)>0$.
Moreover, all weak solutions $\vec{u}$ of $\cL\vec{u}=0$ in $Q_R^-=Q_R^-(X_0)$
with $R<R_c$ satisfy
\begin{equation}
\label{eqn:P-11}
\norm{\vec{u}}_{L^\infty(Q_r^-)}\le \frac{C_p}{(R-r)^{(n+2)/p}}
\norm{\vec{u}}_{L^p(Q_R^-)} \quad \forall r \in (0,R)
\quad\forall p>0,
\end{equation}
where $C_p=C_p(n,N,\lambda,\Lambda,\mu_0,C_0)>0$.
A similar statement is also true for $\cLt$ with
$Q^-$ replaced by $Q^+$ everywhere.
\end{lemma}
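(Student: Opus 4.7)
The plan is to combine property (PH) with Lemmas~\ref{lem3.1} and \ref{lem4} to obtain interior H\"older continuity of $\vec u$, couple this with a Caccioppoli bound on $\int\abs{D\vec u}^2$ to derive \eqref{eq4.03}, and then promote \eqref{eq4.03} to \eqref{eqn:P-11} for arbitrary $p>0$ via a standard Young-inequality plus iteration argument.

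\textbf{Campanato decay and H\"older continuity.} For any $X \in Q_{3R/8}^-(X_0)$ the cylinder $Q_{3R/8}^-(X)$ is contained in $Q_{3R/4}^-(X_0) \subset Q_R^-(X_0)$, so property (PH) applies to $\vec u$ restricted to $Q_{3R/8}^-(X)$ and gives, for every $\rho \in (0, 3R/8)$,
\begin{equation*}
\int_{Q_\rho^-(X)}\abs{D\vec u}^2 \le C R^{-n-2\mu_0}\rho^{n+2\mu_0}\int_{Q_{3R/4}^-(X_0)}\abs{D\vec u}^2.
\end{equation*}
Feeding this into Lemma~\ref{lem3.1} (with $\vec f=0$) and invoking a Caccioppoli inequality $\norm{D\vec u}_{L^2(Q_{3R/4}^-(X_0))}^2 \le C R^{-2}\norm{\vec u}_{L^2(Q_R^-(X_0))}^2$ (obtained by testing $\cL\vec u=0$ against $\eta^2\vec u$ for a smooth cutoff $\eta$) yields
\begin{equation*}
\int_{Q_\rho^-(X)}\abs{\vec u - \bar{\vec u}_{X,\rho}}^2 \le C R^{-n-2\mu_0-2}\rho^{n+2+2\mu_0}\norm{\vec u}_{L^2(Q_R^-(X_0))}^2.
\end{equation*}
Since this holds for all $X\in Q_{R/4}^-(X_0)$ and all $r\in(0,R/4)$, applying Lemma~\ref{lem4} on $Q_{R/4}^-(X_0)$ gives
\begin{equation*}
[\vec u]_{C^{\mu_0,\mu_0/2}(Q_{R/4}^-(X_0))} \le CR^{-n/2-\mu_0-1}\norm{\vec u}_{L^2(Q_R^-(X_0))}.
\end{equation*}

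\textbf{Derivation of \eqref{eq4.03}.} For any $X_1\in Q_{R/4}^-(X_0)$ the triangle inequality gives
\begin{equation*}
\abs{\vec u(X_1)} \le \Bigl|\dashint_{Q_{R/4}^-(X_0)}\vec u\,\Bigr| + CR^{\mu_0}[\vec u]_{C^{\mu_0,\mu_0/2}(Q_{R/4}^-(X_0))}.
\end{equation*}
Cauchy--Schwarz bounds the first term by a constant multiple of $\bigl(\dashint_{Q_R^-(X_0)}\abs{\vec u}^2\bigr)^{1/2}$. The second term is also bounded by the same quantity, since the H\"older estimate above combined with $R^{\mu_0}\cdot R^{-n/2-\mu_0-1}\cdot R^{(n+2)/2}=1$ produces precisely $\bigl(\dashint_{Q_R^-(X_0)}\abs{\vec u}^2\bigr)^{1/2}$. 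This proves \eqref{eq4.03}.

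\textbf{Passage to \eqref{eqn:P-11} and main obstacle.} For $p\ge 2$, \eqref{eqn:P-11} is immediate from \eqref{eq4.03} and H\"older's inequality after rescaling $R/4\mapsto r$, $R\mapsto\rho$. For $0<p<2$, combine the rescaled form $\norm{\vec u}_{L^\infty(Q_r^-)} \le C(\rho-r)^{-(n+2)/2}\norm{\vec u}_{L^2(Q_\rho^-)}$ of \eqref{eq4.03} with the interpolation $\norm{\vec u}_{L^2(Q_\rho^-)}^2 \le \norm{\vec u}_{L^\infty(Q_\rho^-)}^{2-p}\norm{\vec u}_{L^p(Q_\rho^-)}^p$ and Young's inequality to obtain
\begin{equation*}
\norm{\vec u}_{L^\infty(Q_r^-)} \le \tfrac12\norm{\vec u}_{L^\infty(Q_\rho^-)} + C(\rho-r)^{-(n+2)/p}\norm{\vec u}_{L^p(Q_R^-)}.
\end{equation*}
A standard iteration lemma (of Giaquinta--Giusti type) then produces \eqref{eqn:P-11}. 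The corresponding statement for $\cLt$ follows by the identical argument with $Q^+$ in place of $Q^-$. The only technical care needed is bookkeeping the radii so that (PH), Lemma~\ref{lem3.1}, Caccioppoli, and Lemma~\ref{lem4} chain together consistently (and in particular so that (PH) is invoked only with radii $<R_c$); there is no real conceptual obstacle, since (PH) is precisely the hypothesis that makes the Campanato--Morrey argument run.
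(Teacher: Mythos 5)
Your proposal is correct and follows essentially the same route as the paper: combine Lemma~\ref{lem3.1}, property (PH), and the energy (Caccioppoli) inequality to get Campanato decay of $\int_{Q_\rho^-}\abs{\vec u-\bar{\vec u}}^2$, apply Lemma~\ref{lem4} to get the H\"older seminorm bound, and then pass to \eqref{eq4.03} by averaging and to \eqref{eqn:P-11} by an interpolation--Young--iteration argument. The only differences are cosmetic (your radii bookkeeping uses $3R/8$ and $3R/4$ where the paper uses $R/4$ and $R/2$, and you spell out the averaging step and the Giaquinta--Giusti iteration that the paper delegates to \cite{HofKim2} and \cite[pp.~80--82]{Giaq93}).
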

\begin{proof}
Let $\vec{u}$ be a weak solution of $\cL \vec{u}=0$ in $Q_R^-(X_0)$,
where $R<R_c$.
For any $X\in Q_{R/2}^-(X_0)$ and $r<R/4$, it follows from
Lemma~\ref{lem3.1}, property (PH), and the energy inequality
(see e.g., \cite[\S III.2]{LSU}) that
\begin{align} 
\nonumber
\int_{Q_r^-(X)} \abs{\vec{u}-\bar{\vec{u}}_{X,r}}^2
&\le C r^2 \int_{Q_r^-(X)} \abs{D\vec{u}}^2
\le C r^2(r/R)^{n+2\mu_0}
\int_{Q_{R/4}^-(X)}\abs{D\vec{u}}^2\\
\nonumber
&\le C (r/R)^{n+2+2\mu_0} \int_{Q_{R/2}^-(X)}\abs{\vec{u}}^2\\
\label{eq2.18}
&\le C r^{n+2+2\mu_0} R^{-2\mu_0} \dashint_{Q_R^-(X_0)}\abs{\vec{u}}^2.
\end{align}
Note that \eqref{eq2.18} also holds for any $X\in Q_{R/2}^-(X_0)$ and
any $r \in [R/4,R/2)$, because
\begin{equation*}
\int_{Q_r^-(X)} \abs{\vec{u}-\bar{\vec{u}}_{X,r}}^2
\le \int_{Q_r^-(X)} \abs{\vec{u}}^2
\le \int_{Q_R^-(X_0)} \abs{\vec{u}}^2.
\end{equation*}
Therefore, by Lemma~\ref{lem4}, we obtain
\begin{equation}
\label{eqn:P-12}
[\vec{u}]_{C^{\mu_0,\mu_0/2}(Q_{R/2}^-(X_0))}^2
\leq C R^{-2\mu_0} \dashint_{Q_R^-(X_0)}\abs{\vec{u}}^2.
\end{equation}
Then, \eqref{eq4.03} is an easy consequence of \eqref{eqn:P-12}
and a well known averaging argument (see e.g., \cite{HofKim2}).
We point out that it actually follows from \eqref{eq2.18}
\begin{equation}
\label{eqP.24}
[\vec{u}]_{C^{\mu_0,\mu_0/2}(Q_{R/2}^-(X_0))}^2
\leq C R^{2-2\mu_0} \dashint_{Q_R^-(X_0)}\abs{D\vec{u}}^2.
\end{equation}
For the proof that \eqref{eq4.03} implies \eqref{eqn:P-11},
we refer to \cite[pp. 80--82]{Giaq93}.
\end{proof}

\subsection{Main results}
We now state our main theorems.
\begin{theorem} \label{thm1}
Let $\U=\bR\times\Omega$, where $\Omega$ is an open connected set in $\bR^n$.
Denote $d_X=\dist(X,\partial\U)=\dist(x,\partial\Omega)$ for $X=(t,x)\in \U$;
we set $d_X=\infty$ if $\Omega=\bR^n$.
Assume that operators $\cL$ and $\cLt$ satisfy the property $\PH$.
Then, there exists a unique Green's matrix
$\vec{\Gamma}(X,Y)=\vec{\Gamma}(t,x,s,y)$ on $\U\times\U$
which is continuous in $\set{(X,Y)\in\U\times\U:X\neq Y}$,
satisfies $\vec{\Gamma}(t,x,s,y)\equiv 0$ for $t<s$,
and has the property that $\vec\Gamma(X,\cdot)$ is locally integrable in $\U$
for all $X\in\U$ and that for all
$\vec{f}\in C^\infty_c(\U)^N$, the function $\vec{u}$ given by
\begin{equation}  \label{eqn:E-70}
\vec{u}(X):=\int_{\U} \vec{\Gamma}(X,Y)\vec{f}(Y)\,dY
\end{equation}
belongs to $\rV^{1,0}_2(\U)^N$ and satisfies $\cL\vec{u}=\vec{f}$ in the sense
of \eqref{eqn:E-71}.
Moreover, $\vec\Gamma$ satisfies
\begin{equation}  \label{eq5.22}
\int_{\U}\left(-\Gamma_{ik}(\cdot,Y)\phi^i_t+
A^{\alpha\beta}_{ij}D_\beta \Gamma_{jk}(\cdot,Y)D_\alpha \phi^i\right)
= \phi^k(Y)
\quad \forall \vec{\phi}\in C^\infty_c(\U)^N
\end{equation}
and for all $\eta\in C^\infty_c(\U)$ satisfying $\eta\equiv 1$ on $Q_r(Y)$
for some $r<d_Y$, we have
\begin{equation}  \label{eq5.21}
(1-\eta)\vec\Gamma(\cdot,Y)\in \rV^{1,0}_2(\U)^{N\times N}.
\end{equation}
Furthermore, for all $\vec{g}\in L^2(\Omega)^N$,
the function $\vec{u}(t,x)$ given by
\begin{equation}  \label{eq5.24}
\vec{u}(t,x):=\int_{\Omega}\vec{\Gamma}(t,x,s,y)\vec{g}(y)\,dy
\qquad \forall x\in\Omega \quad \forall t>s
\end{equation}
is the unique weak solution in $\rV^{1,0}_2((s,\infty)\times\Omega)^N$
of the Cauchy problem
\begin{equation} \label{eq5.88}
\left\{\begin{array}{l l}
\cL \vec{u}=0\\
\vec{u}(s,\cdot)= \vec{g},\end{array}\right.
\end{equation}
and if $\vec g$ is continuous at $x_0\in\Omega$ in addition, then
\begin{equation} \label{eq5.23}
\lim_{\substack{(t,x)\to (s,x_0)\\x\in\Omega,\,t>s}}
\int_{\Omega}\vec{\Gamma}(t,x,s,y)\vec{g}(y)\,dy =\vec{g}(x_0).
\end{equation}
Denote $\bar{d}_X:=\min(d_X, R_c)$ for $X\in\U$.
Then, $\vec{\Gamma}$ satisfies the following estimates:
\begin{gather} \label{eq5.15}
\|\vec{\Gamma}(\cdot,Y)\|_{L^{2+4/n}(\U\setminus \overline Q_r(Y))}+
\tri{\vec{\Gamma}(\cdot,Y)}_{\U\setminus \overline Q_r(Y)}
\leq Cr^{-n/2} \quad \forall r<\bar{d}_Y,\\
\label{eq5.15b}
\|\vec{\Gamma}(X,\cdot)\|_{L^{2+4/n}(\U\setminus \overline Q_r(X))}+
\tri{\vec{\Gamma}(X,\cdot)}_{\U\setminus \overline Q_r(X)}
\leq Cr^{-n/2} \quad \forall r<\bar{d}_X,
\end{gather}
\begin{gather} \label{eq5.18}
\|\vec{\Gamma}(\cdot,Y)\|_{L^p(Q_r(Y))} \leq C_p r^{-n+(n+2)/p}
\quad \forall r<\bar{d}_Y \quad\forall p\in [1,\tfrac{n+2}{n}),\\
\|\vec{\Gamma}(X,\cdot)\|_{L^p(Q_r(X))}\leq C_p r^{-n+(n+2)/p}
\quad \forall r<\bar{d}_X \quad\forall p\in [1,\tfrac{n+2}{n}),
\end{gather}
\begin{gather}
\abs{\set{X\in \U:\abs{\vec{\Gamma}(X,Y)}>\tau}} \leq C \tau^{-(n+2)/n}
\quad \forall \tau>(\bar{d}_Y/2)^{-n} \quad \forall Y\in\U,\\
\abs{\set{Y\in \U:\abs{\vec{\Gamma}(X,Y)}>\tau}} \leq C \tau^{-(n+2)/n}
\quad \forall \tau>(\bar{d}_X/2)^{-n} \quad \forall X\in\U,
\end{gather}
\begin{gather} \label{eq5.19}
\|D\vec{\Gamma}(\cdot,Y)\|_{L^p(Q_r(Y))}\leq C_p r^{-n-1+(n+2)/p}
\quad \forall r<\bar{d}_Y \quad\forall p\in [1,\tfrac{n+2}{n+1}),\\
\|D\vec{\Gamma}(X,\cdot)\|_{L^p(Q_r(X))}\leq C_p r^{-n-1+(n+2)/p}
\quad \forall r<\bar{d}_X \quad\forall p\in [1,\tfrac{n+2}{n+1}),
\end{gather}
\begin{gather}
\abs{\set{X\in \U:\abs{D_x\vec{\Gamma}(X,Y)}>\tau}}
\leq C \tau^{-\frac{n+2}{n+1}}
\quad \forall \tau>(\bar{d}_Y/2)^{-n} \quad \forall Y\in\U,\\
\label{eq5.20}
\abs{\set{Y\in \U:\abs{D_y\vec{\Gamma}(X,Y)}>\tau}}
\leq C \tau^{-\frac{n+2}{n+1}}
\quad \forall \tau>(\bar{d}_X/2)^{-n} \quad \forall X\in\U,
\end{gather}
\begin{equation} \label{eq365}
\abs{\vec\Gamma(X,Y)}\leq C |X-Y|_p^{-n}\quad
\text{if}\quad 0<|X-Y|_p<\tfrac{1}{2}\max(\bar{d}_X,\bar{d}_Y),
\end{equation}
where $C=C(n,N,\lambda,\Lambda,\mu_0,C_0)>0$ and
$C_p=C_p(n,N,\lambda,\Lambda,\mu_0,C_0,p)>0$.
\end{theorem}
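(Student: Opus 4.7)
The plan is to construct $\vec{\Gamma}$ as a limit of averaged approximants. For each pole $Y=(s,y)\in\U$, small $\epsilon>0$, and index $k\in\{1,\ldots,N\}$, I let $\vec{v}^{Y,k}_\epsilon$ be the unique $\rV^{1,0}_2((s,\infty)\times\Omega)^N$-weak solution, supplied by Lemma~\ref{lem13.1}, of the Cauchy problem $\cL\vec{v}=0$ with initial data $|B_\epsilon(y)|^{-1}\chi_{B_\epsilon(y)}\vec{e}_k$, extended by zero for $t\le s$. Assembling these as the columns of an $N\times N$ matrix produces the approximate Green's matrix $\vec{\Gamma}_\epsilon(\cdot,Y)$, and running the analogous backward Cauchy problem for $\cLt$ produces a dual approximant ${}^t\vec{\Gamma}_\mu(X,\cdot)$.

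\textbf{Uniform estimates by duality.} For $\vec{f}\in L^\infty_c(\U)^N$ supported outside $Q_r(Y)$, I let $\vec{w}$ solve $\cLt\vec{w}=\vec{f}$ backward from $+\infty$ via Lemma~\ref{lem13.1}; pairing $\vec{v}^{Y,k}_\epsilon$ with $\vec{w}$ and integrating by parts (justified because $\tri{\vec{w}}_\U<\infty$ and one truncates in time) gives
\begin{equation*}
\int_\U \Gamma^\epsilon_{jk}(\cdot,Y)\,f^j\,dX \;=\; \dashint_{B_\epsilon(y)} w^k(s,\cdot).
\end{equation*}
Because $\cLt$ satisfies $\PH$, Lemma~\ref{lem3} combined with the standard energy estimate bounds $|\vec{w}(Y)|$ in terms of a suitable $L^{p'}$ norm of $\vec{f}$, with the correct scaling in $r$; by $L^p$--$L^{p'}$ duality this yields the $L^p$ bound \eqref{eq5.18} and its weak-type companion for $\vec{\Gamma}_\epsilon(\cdot,Y)$, uniformly in $\epsilon$. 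Testing $\vec{\Gamma}_\epsilon(\cdot,Y)$ against itself, cut off away from $Y$, and invoking the energy inequality then delivers \eqref{eq5.15}, \eqref{eq5.19}, and the gradient weak-type estimates.

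\textbf{Passage to the limit and identification.} The sharpened estimate \eqref{eqP.24} applied to $\vec{\Gamma}_\epsilon(\cdot,Y)$ on cylinders disjoint from $Y$ supplies a uniform H\"older modulus on compact subsets of $\U\setminus\{Y\}$; consequently a diagonal subsequence $\epsilon_j\to 0$ converges locally uniformly off the pole and ``very weakly'' in $V_2$ on exterior regions, defining $\vec{\Gamma}(\cdot,Y)$. The same procedure applied to $\cLt$ produces ${}^t\vec{\Gamma}(X,\cdot)$, and passing to the limit in the duality identity above gives the pointwise adjoint relation $\Gamma_{ij}(X,Y)={}^t\Gamma_{ji}(Y,X)$; this instantly transfers every ``$Y$-pole'' estimate to its ``$X$-pole'' counterpart. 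The identities \eqref{eqn:E-70}, \eqref{eq5.22}, \eqref{eq5.21}, the Cauchy representation \eqref{eq5.24} solving \eqref{eq5.88} with initial condition \eqref{eq5.23}, and causality $\vec{\Gamma}\equiv 0$ for $t<s$ all follow by passing to the limit in the corresponding identities for $\vec{\Gamma}_\epsilon$, with the uniqueness part of Lemma~\ref{lem13.1} pinning down the Cauchy solution throughout.

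\textbf{Pointwise bound and main obstacle.} The bound \eqref{eq365} is obtained by local bootstrap: for $R=\tfrac14|X_0-Y_0|_p<\tfrac14\bar{d}_{X_0}$, the cylinder $Q_R^-(X_0)$ avoids $Y_0$, so Lemma~\ref{lem3} applied to $\vec{\Gamma}(\cdot,Y_0)$ yields
\begin{equation*}
|\vec{\Gamma}(X_0,Y_0)|\le CR^{-(n+2)/p}\|\vec{\Gamma}(\cdot,Y_0)\|_{L^p(Q_R^-(X_0))}\quad\text{for any }p<\tfrac{n+2}{n},
\end{equation*}
and feeding in the already-established weak-$L^{(n+2)/n}$ estimate, whose $L^p$ consequence on a region of measure $\sim R^{n+2}$ is of order $R^{(n+2)/p-n}$, gives $|\vec{\Gamma}(X_0,Y_0)|\le CR^{-n}$. \emph{The main obstacle} is making the duality argument airtight: because $\cL$ is not self-adjoint and $\Omega$ may be unbounded with irregular boundary, one must construct and estimate Green's matrices for $\cL$ and $\cLt$ in parallel, verify that every integration by parts and every exchange of limit and integral is legitimate without a priori decay at infinity, and manage supports carefully so that each pairing avoids the singularity of whichever approximant it is tested against.
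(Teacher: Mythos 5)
Your approximant is genuinely different from the paper's: you average a Dirac mass \emph{in space} at the initial time (solving a Cauchy problem with data $|B_\epsilon(y)|^{-1}\chi_{B_\epsilon(y)}\vec e_k$), whereas the paper averages a Dirac mass \emph{in spacetime} (solving $\cL\vec v = |Q^-_\rho|^{-1}\chi_{Q^-_\rho(Y)}\vec e_k$ with zero initial data). Both can work, but the spacetime version has a practical advantage: its duality identity $\int \vec v_\rho\cdot\vec f = \dashint_{Q^-_\rho(Y)}u^k$ involves an interior spacetime average of the dual solution, whereas yours, $\int\vec\Gamma_\epsilon\cdot\vec f=\dashint_{B_\epsilon(y)}w^k(s,\cdot)$, involves a trace at a terminal time slice, and you must justify that the interior $L^\infty$ bound of Lemma~\ref{lem3} extends to that slice rather than just to the open cylinder. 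This is true (the Morrey--Campanato H\"older estimate reaches the parabolic boundary $\{t=s\}$), but it is an unacknowledged subtlety.

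The one step I find genuinely wanting is your treatment of the key $L^\infty$ estimate for the dual problem, which is the engine of the whole argument. You write that ``Lemma~\ref{lem3} combined with the standard energy estimate bounds $|\vec w(Y)|$ in terms of a suitable $L^{p'}$ norm of $\vec f$, with the correct scaling in $r$.'' But Lemma~\ref{lem3} applies only to \emph{homogeneous} solutions of $\cLt\vec w=0$. To get the bound $\norm{\vec w}_{L^\infty(Q^+_{R/2})}\le CR^{2-(n+2)/p}\norm{\vec f}_{L^p}$ for $\vec f$ supported in $Q^+_R$ --- the estimate \eqref{eq2.19} that drives Lemma~\ref{lem3.2.5} --- one needs the Campanato iteration with the local Dirichlet decomposition $\vec u=\vec u_1+\vec u_2$ carried out in Section~\ref{sec3.2} of the paper; it does not follow from Lemma~\ref{lem3} by itself. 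If you instead take $\vec f$ supported \emph{outside} $Q_r(Y)$ (as you state), Lemma~\ref{lem3} does apply, and the duality then yields the exterior $L^{2+4/n}$ bound \eqref{eq5.15}, not the interior bound \eqref{eq5.18}; the path from the exterior bound to \eqref{eq5.18} goes through the weak-type estimate, and your write-up conflates these. Either way, a nontrivial argument is missing or misattributed. Similarly, you claim that the initial-value property \eqref{eq5.23} ``follows by passing to the limit in the corresponding identities for $\vec\Gamma_\epsilon$''; in fact it is proved in the paper through the off-diagonal Gaffney-type decay of Lemma~\ref{lem4.1} together with the approximate-identity Lemmas~\ref{lem4.2} and~\ref{lem4.3}, none of which appear in your outline, and passing to the limit alone does not reproduce them. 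Finally, the membership assertion \eqref{eq5.21}, that $(1-\eta)\vec\Gamma(\cdot,Y)\in\rV^{1,0}_2(\U)^{N\times N}$, is not a mere weak limit statement --- weak limits live a priori only in $\rV_2$ --- but requires identifying $(1-\eta)\vec\Gamma(\cdot,Y)$ as the solution of a localized Cauchy problem and invoking the $\rV^{1,0}_2$-regularity theory, as the paper does at the end of Section~\ref{sec0303}.
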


\begin{corollary} \label{cor1}
If $N=1$ or if $N>1$ and the coefficients of $\cL$ satisfies the assumption
of Lemma~\ref{lem1}, then there exists a unique Green's function
in a cylindrical domain satisfying the properties in
Theorem~\ref{thm1} with $R_c=\infty$.
\end{corollary}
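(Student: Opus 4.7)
The plan is straightforward: verify that property $\PH$ holds with $R_c=\infty$ for both $\cL$ and $\cLt$ in each of the two cases, and then quote Theorem~\ref{thm1} directly. Once $\PH$ is established globally in scale, the existence, uniqueness, and full list of growth estimates in Theorem~\ref{thm1} become the content of the corollary verbatim; the restriction $r<\bar d_X=\min(d_X,R_c)$ in \eqref{eq5.15}--\eqref{eq365} simply collapses to $r<d_X$ because $R_c=\infty$.

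For $N>1$ with the almost-diagonal hypothesis, nothing beyond Lemma~\ref{lem1} is required: that lemma states precisely that $\cL$ and $\cLt$ satisfy $\PH$ with $R_c=\infty$ and constants depending only on $n,N,\lambda_0,\Lambda_0$. So this case is immediate.

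For the scalar case $N=1$, the verification of $\PH$ with $R_c=\infty$ is classical and the steps I would follow are these. Nash's theorem provides an interior H\"older estimate for every weak solution $u$ of $\cL u=0$ in $Q_R^-(X_0)$, with an exponent $\mu_0=\mu_0(n,\lambda,\Lambda)\in(0,1)$ and a scale-invariant constant. By Campanato's characterization, this is equivalent to the decay
\begin{equation*}
\int_{Q_\rho^-(X_0)} |u-\bar u_{X_0,\rho}|^2
\le C\Bigl(\frac{\rho}{r}\Bigr)^{n+2+2\mu_0}\int_{Q_r^-(X_0)} |u-\bar u_{X_0,r}|^2
\quad \forall 0<\rho<r.
\end{equation*}
The Caccioppoli inequality, applied to the shifted solution $u-\bar u_{X_0,r}$ (still a solution since the equation is linear with no zero-order term), bounds $\int_{Q_{\rho/2}^-}|Du|^2$ by $C\rho^{-2}\int_{Q_\rho^-}|u-\bar u_{X_0,\rho}|^2$; and the parabolic Poincar\'e inequality for weak solutions controls $\int_{Q_r^-}|u-\bar u_{X_0,r}|^2$ by $Cr^2\int_{Q_r^-}|Du|^2$. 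Chaining these three ingredients and relabeling $\rho/2\to\rho$ produces \eqref{eq3.43} with $C_0=C_0(n,\lambda,\Lambda)$ and no upper bound on $R$, so $R_c=\infty$. The operator $\cLt$ is of the same scalar divergence-form type and is handled either by repeating the argument on forward cylinders or, more economically, by the time reversal $(t,x)\mapsto(-t,x)$, which carries $\cLt$-solutions on $Q_r^+$ into solutions of an operator of $\cL$-type on $Q_r^-$.

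The main obstacle, such as it is, lies in step~(iii) above: one must invoke the correct scale-invariant forms of Nash's H\"older estimate and the parabolic Poincar\'e inequality for solutions. Both are completely standard and commonly cited from \cite{LSU}, so this is really a bookkeeping matter. No additional difficulty is introduced by $\Omega$ being unbounded or having an irregular boundary, since the entire verification of $\PH$ is purely interior and the remaining work is delegated to Theorem~\ref{thm1}.
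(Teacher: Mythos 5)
Your proposal is correct, and the $N>1$ half is exactly what the paper does: quote Lemma~\ref{lem1} and then invoke Theorem~\ref{thm1}. For the scalar case $N=1$ you take a genuinely different route. The paper's entire proof is ``See Lemma~\ref{lem1},'' and the natural reading is that $N=1$ is already covered by that lemma: simply take the reference coefficients $a^{\alpha\beta}:=A^{\alpha\beta}_{11}$, which satisfy the ellipticity and boundedness hypotheses of Lemma~\ref{lem1} by \eqref{eqn:P-02}--\eqref{eqn:P-03}, and then $\epsilon(X)\equiv 0<\epsilon_0$, so the almost-diagonal condition holds trivially and $\PH$ with $R_c=\infty$ follows at once. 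You instead re-derive $\PH$ for $N=1$ from scratch via Nash's interior H\"older estimate, a Campanato characterization, Caccioppoli applied to the shifted solution $u-\bar u_{X_0,r}$, and the parabolic Poincar\'e inequality (Lemma~\ref{lem3.1} with $\vec f=0$); the chaining you describe does yield \eqref{eq3.43} with $C_0=C_0(n,\lambda,\Lambda)$ and no scale restriction, and the time-reversal remark disposes of $\cLt$. Both arguments are valid. The paper's is shorter and folds both cases into a single lemma citation; yours is more self-contained for $N=1$ and makes explicit the mechanism by which Nash's theorem delivers the Morrey-type gradient decay, which the paper leaves implicit inside the reference to \cite{HofKim2}. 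The only fussy point in your chain — needing $\rho\le r/2$ or so for the intermediate Caccioppoli step before relabeling, and replacing $\int_{Q_r^-}|u|^2$ by $\int_{Q_r^-}|u-\bar u_{X_0,r}|^2$ before applying the H\"older estimate — is a routine normalization and you acknowledge it.
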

\begin{proof}
See Lemma~\ref{lem1}.
\end{proof}

\begin{corollary} \label{cor2}
If $n=2$ and the coefficients of $\cL$ are independent of $t$, then
there exists a unique Green's matrix
in a cylindrical domain satisfying the properties in
Theorem~\ref{thm1} with $R_c=\infty$.
\end{corollary}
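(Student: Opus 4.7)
The plan is to derive Corollary~\ref{cor2} as an immediate consequence of Theorem~\ref{thm1}: the only thing that actually needs to be checked is the standing hypothesis of that theorem, namely that in the special case $n=2$ with time-independent coefficients $A^{\alpha\beta}_{ij}=A^{\alpha\beta}_{ij}(x)$, both $\cL$ and $\cLt$ satisfy the property (PH) with $R_c=\infty$. Once this is in hand, Theorem~\ref{thm1} applies verbatim in any cylindrical domain $\U=\bR\times\Omega$, producing the Green's matrix together with all the claimed growth, integrability, weak-type, and pointwise estimates \eqref{eq5.15}--\eqref{eq365}. Moreover, since $\bar d_X=\min(d_X,R_c)$ collapses to $d_X$ when $R_c=\infty$, the estimates take their sharpest form, and uniqueness is contained in the conclusion of Theorem~\ref{thm1}.

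The verification of (PH) is the one substantive input, and it is exactly the result established in \cite{Kim} that is flagged just after the definition of (PH) in Section~\ref{sec:PH}. The heuristic reason such a result holds in two spatial dimensions is that when the coefficients do not depend on $t$, the parabolic interior estimate for a weak solution $\vec u$ of $\cL\vec u=0$ on a backward cylinder $Q_R^-(X_0)$ can be reduced, by slicing in time and using a Caccioppoli inequality, to a Morrey--Campanato estimate for the associated stationary system $L\vec v=D_\alpha(\vec A^{\alpha\beta}(x)D_\beta \vec v)=0$ on balls in $\bR^2$. In two dimensions, even systems with only bounded measurable coefficients admit an interior Hölder estimate (classically due to Morrey), which, coupled with a Meyers-type higher integrability of the gradient and a standard hole-filling / dyadic iteration, yields \eqref{eq3.43} with exponent $n+2\mu_0=2+2\mu_0$ and no restriction on the scale, i.e.\ $R_c=\infty$. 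Because the transpose $\cLt$ has precisely the same time-independent coefficient structure (after reflecting $t\mapsto -t$, which sends $Q^+$ to $Q^-$ and preserves the strong parabolicity constants), the same argument yields \eqref{eq3.43b} for $\cLt$ with $R_c=\infty$.

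The main obstacle is therefore not the corollary itself, which is formal once (PH) is available, but this 2D-specific verification of (PH) at all scales. In dimensions $n\ge 3$ with merely bounded measurable coefficients, one cannot expect such Hölder regularity for systems (by De Giorgi/Giusti--Miranda type counterexamples), so the restriction $n=2$ is essential to the argument in \cite{Kim}. Granting that ingredient, the proof of Corollary~\ref{cor2} reduces to the single line: by \cite{Kim} both $\cL$ and $\cLt$ satisfy (PH) with $R_c=\infty$, so Theorem~\ref{thm1} yields the claimed Green's matrix.
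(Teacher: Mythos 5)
Your proposal is correct and takes exactly the same route as the paper: Corollary~\ref{cor2} is reduced to verifying that $\cL$ and $\cLt$ satisfy property (PH) with $R_c=\infty$, which is delegated to \cite{Kim}, after which Theorem~\ref{thm1} applies directly. The extra heuristic discussion of why the 2D time-independent case gives (PH) at all scales is accurate but not needed, since the paper (like you) simply cites \cite{Kim} for that step.
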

\begin{proof}
See \cite{Kim} for the proof that $\cL$ and $\cLt$ satisfy the property $\PH$.
\end{proof}

\begin{corollary} \label{cor3}
If $N>1$ and the coefficients of $\cL$ belong to $\VMO_x$
(e.g. the coefficients are uniformly continuous in $x$ and
measurable in $t$ or belong to $\VMO$),
then there exists a unique Green's matrix
in a cylindrical domain satisfying the properties in
Theorem~\ref{thm1} with $R_c>0$.
\begin{proof}
See Lemma~\ref{lem:V-03}.
\end{proof}

\end{corollary}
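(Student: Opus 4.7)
The plan is to recognize that this is purely a matter of verifying the hypothesis of Theorem~\ref{thm1}: once we know that both $\cL$ and $\cLt$ satisfy property (PH) with some finite $R_c>0$, the existence, uniqueness, and all quantitative properties of the Green's matrix follow directly from Theorem~\ref{thm1} with no additional work.

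So the entire task reduces to invoking Lemma~\ref{lem:V-03}. That lemma, whose hypotheses require precisely \eqref{eqn:P-02}, \eqref{eqn:P-03}, and membership of the coefficients $A^{\alpha\beta}_{ij}$ in $\VMO_x$, concludes that both $\cL$ and $\cLt$ enjoy the property (PH) for some $\mu_0\in(0,1]$, $C_0>0$, and some finite $R_c>0$ (the threshold $R_c$ being chosen so that $C\,\omega_{R_c}(\vec A)^{2/q}$ is small enough to run the iteration in the proof of Lemma~\ref{lem:V-03}). In our setting the standing strong parabolicity and boundedness conditions \eqref{eqn:P-02}--\eqref{eqn:P-03} are in force throughout the paper, and by hypothesis $A^{\alpha\beta}_{ij}\in\VMO_x$, so Lemma~\ref{lem:V-03} applies verbatim.

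Once (PH) is in hand for both $\cL$ and $\cLt$ with $R_c>0$, Theorem~\ref{thm1} gives the Green's matrix $\vec\Gamma(X,Y)$ on $\U\times\U$ together with the full catalog of properties \eqref{eqn:E-70}--\eqref{eq365}. The only nuance worth flagging is that $R_c$ here is finite, not $+\infty$ as in Corollaries~\ref{cor1} and \ref{cor2}; but Theorem~\ref{thm1} is stated precisely to accommodate a finite $R_c$ through the quantity $\bar d_X=\min(d_X,R_c)$ appearing in all the growth bounds, so no adjustment is needed. The remark that $\VMO_x$ contains both functions uniformly continuous in $x$ (measurable in $t$) and the classical $\VMO$ class, quoted right before Lemma~\ref{lem:V-03}, justifies the parenthetical examples in the statement of the corollary.

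Since there is no genuine obstacle here — the corollary is a one-line deduction combining Lemma~\ref{lem:V-03} and Theorem~\ref{thm1} — the only thing to be careful about is to state explicitly that the constants $\mu_0,C_0,R_c$ in (PH), and hence the constants $C,C_p$ in the estimates of Theorem~\ref{thm1}, now depend additionally on the $\VMO_x$ modulus $\omega_\delta(\vec A)$ of the coefficients (through the choice of $R_c$). Apart from this bookkeeping remark, the proof is simply: apply Lemma~\ref{lem:V-03}, then apply Theorem~\ref{thm1}.
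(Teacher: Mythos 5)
Your proposal is correct and follows exactly the paper's route: the corollary is proved by citing Lemma~\ref{lem:V-03} to obtain property (PH) for $\cL$ and $\cLt$ with some finite $R_c>0$, and then invoking Theorem~\ref{thm1}. The extra remarks about $\bar d_X=\min(d_X,R_c)$ and the dependence of the constants on the $\VMO_x$ modulus are accurate but not needed beyond the paper's one-line argument.
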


\begin{theorem} \label{thm2}
Assume that operators $\cL$ and $\cLt$ satisfy the property $\PH$
and let $\vec\Gamma(t,x,s,y)$ be the fundamental matrix for
$\cL$ in $\U=\bR^{n+1}$
as given in Theorem~\ref{thm1}.
Then, we have for all $t>s$ and $x,y\in\bR^n$
\begin{equation} \label{eq5.27}
\abs{\vec{\Gamma}(t,x,s,y)}_{op}\leq
C(t-s)^{-n/2}\exp\{-\kappa|x-y|^2/(t-s)+\gamma(t-s)/R_c^2\},
\end{equation}
where $C=C(n,N,\lambda,\Lambda,\mu_0,C_0), \kappa=\kappa(\lambda,\Lambda),
\gamma=\gamma(n,N,\lambda,\Lambda,\mu_0,C_0)>0$,
and $\abs{\,\cdot\,}_{op}$ denotes the operator norm.
In particular, when $R_c=\infty$, we have the following usual Gaussian bound
for all $t>s$ and $x,y\in\bR^n$:
\begin{equation} \label{eq5.27b}
\abs{\vec{\Gamma}(t,x,s,y)}_{op}\leq
C(t-s)^{-n/2}\exp\{-\kappa|x-y|^2/(t-s)\}.
\end{equation}
Moreover, for $t>s$ the following identities hold:
\begin{gather} 
\label{eq5.14z}
\vec\Gamma(t,x,s,y)=\int_{\bR^n} \vec\Gamma(t,x,r,z)\vec\Gamma(r,z,s,y)\,dz
\quad\forall x,y\in\bR^n \quad\forall r\in (s,t),\\
\label{eq5.99}
\int_{\bR^n}\vec{\Gamma}(t,x,s,y)\,dy=\vec{I}
\quad\forall x\in\bR^n,
\end{gather}
where $\vec{I}$ is the $N$ by $N$ identity matrix.
\end{theorem}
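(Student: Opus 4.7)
The plan is to first prove the Gaussian upper bound \eqref{eq5.27} via Davies' exponential perturbation method, and then to deduce \eqref{eq5.14z} and \eqref{eq5.99} from the uniqueness of the Cauchy problem already established in Theorem~\ref{thm1} and Lemma~\ref{lem13.1}.

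For \eqref{eq5.27}, fix a bounded Lipschitz $\psi\colon\bR^n\to\bR$ with $\|\nabla\psi\|_\infty\le\alpha$. Given $\vec g\in L^2(\bR^n)^N$, let $\vec u(t,x)=\int\vec\Gamma(t,x,s,y)\vec g(y)\,dy$, which by \eqref{eq5.24} is the unique $\rV^{1,0}_2((s,\infty)\times\bR^n)^N$ solution of $\cL\vec u=0$, $\vec u(s,\cdot)=\vec g$. Testing \eqref{eq:c01} against $e^{2\psi}\vec u$ and using the strong parabolicity \eqref{eqn:P-02}, the bound \eqref{eqn:P-03}, and Young's inequality produces
\begin{equation*}
\tfrac{d}{dt}\int_{\bR^n}e^{2\psi}|\vec u(t,\cdot)|^2\,dx\le C\alpha^2\int_{\bR^n}e^{2\psi}|\vec u(t,\cdot)|^2\,dx,
\end{equation*}
so Gronwall gives $\|e^\psi\vec u(t,\cdot)\|_{L^2}\le e^{C\alpha^2(t-s)}\|e^\psi\vec g\|_{L^2}$. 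Setting $R=\tfrac12\min(\sqrt{t-s},R_c)$, one applies Lemma~\ref{lem3} on $Q_R^-(t,x)$ and uses $e^{\psi(x)}\le e^{\alpha R}e^{\psi(z)}$ for $z\in B_R(x)$ to convert the weighted $L^2$ bound into
\begin{equation*}
e^{\psi(x)}|\vec u(t,x)|\le CR^{-n/2}e^{\alpha R}e^{C\alpha^2(t-s)}\|e^\psi\vec g\|_{L^2}.
\end{equation*}
A dual argument on $\cLt$ (which also satisfies (PH)) combined with the operator-level composition $T_{t,s}=T_{t,(t+s)/2}\circ T_{(t+s)/2,s}$ (valid by uniqueness of the Cauchy problem) then promotes this to a pointwise kernel estimate
\begin{equation*}
e^{\psi(x)-\psi(y)}|\vec\Gamma(t,x,s,y)|\le CR^{-n}e^{2\alpha R}e^{C\alpha^2(t-s)}.
\end{equation*}
Choosing $\psi(z)=\alpha(z-y)\cdot(x-y)/|x-y|$, so that $\psi(x)-\psi(y)=\alpha|x-y|$, and optimizing in $\alpha>0$ yields the Gaussian factor $\exp\{-\kappa|x-y|^2/(t-s)\}$. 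For $t-s\le R_c^2$, the prefactor is simply $(t-s)^{-n/2}$; for $t-s>R_c^2$, one rewrites $R^{-n}=R_c^{-n}=(t-s)^{-n/2}\bigl((t-s)/R_c^2\bigr)^{n/2}$ and absorbs the polynomial into $e^{\gamma(t-s)/R_c^2}$, yielding \eqref{eq5.27} (and \eqref{eq5.27b} when $R_c=\infty$).

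Turning to \eqref{eq5.14z}, the Gaussian bound shows $\vec\Gamma(r,\cdot,s,y)\in L^2(\bR^n)^{N\times N}$ for every $r>s$. Applying \eqref{eq5.24} with initial data $\vec\Gamma(r,\cdot,s,y)$ at time $r$, the map $(t,x)\mapsto\int\vec\Gamma(t,x,r,z)\vec\Gamma(r,z,s,y)\,dz$ is the unique $\rV^{1,0}_2((r,\infty)\times\bR^n)$ solution of $\cL\vec u=0$ with those data; the map $(t,x)\mapsto\vec\Gamma(t,x,s,y)$ lies in the same space by \eqref{eq5.15} applied on a cylinder starting at $r$, and also solves the same problem, so uniqueness gives \eqref{eq5.14z}. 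For \eqref{eq5.99}, the Gaussian bound yields $\int|\vec\Gamma(t,x,s,y)|\,dy\le C$, so $\vec W(t,x):=\int\vec\Gamma(t,x,s,y)\,dy$ is bounded. Approximating the constant $\vec e_k$ by truncations $\eta_R\vec e_k$ and passing to the limit using dominated convergence together with \eqref{eq5.23}, one sees that $\vec W$ is a bounded weak solution of $\cL\vec W=0$ satisfying $\vec W(t,\cdot)\to\vec I$ as $t\to s^+$. Since the constant $\vec I$ is another such solution, Tikhonov-type uniqueness among bounded Cauchy solutions (see e.g.\ \cite{LSU}) forces $\vec W\equiv\vec I$.

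The main obstacle is the passage from the weighted $L^2$ estimate (a purely energy-based statement available without (PH)) to the pointwise kernel bound, into which (PH) enters only through the local $L^\infty$ estimate of Lemma~\ref{lem3}. The restriction $R<R_c$ there forces a careful treatment of the two regimes $t-s\le R_c^2$ and $t-s>R_c^2$: in the latter, one must work at the capped scale $R=R_c$ and trade the resulting discrepancy for the exponential correction $e^{\gamma(t-s)/R_c^2}$. A secondary difficulty is the symmetric duality with $\cLt$ needed to pass from operator bounds on $T_{t,s}$ to pointwise control on the kernel $\vec\Gamma$, which requires handling the backward semigroup of $\cLt$ with matching weighted estimates.
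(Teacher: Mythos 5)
Your argument for the Gaussian bound \eqref{eq5.27b} is essentially the paper's: the Davies--Fabes--Stroock scheme of a weighted $L^2\to L^2$ estimate from the energy inequality, a weighted local $L^2\to L^\infty$ estimate from Lemma~\ref{lem3}, the dual weighted estimate via $\cLt$, and the two-step semigroup composition plus duality to extract the kernel bound. Likewise your proof of \eqref{eq5.14z} is the paper's. For \eqref{eq5.27} with $R_c<\infty$ you do depart from the paper: you cap the scale at $R=\tfrac12\min(\sqrt{t-s},R_c)$ throughout and absorb the ensuing polynomial factor and the $e^{2\alpha R}$ term into $e^{\gamma(t-s)/R_c^2}$ at the end, whereas the paper first establishes the short-time bound \eqref{eq5.13z} for $0<t-s\le R_c^2$ and then propagates it to all times by iterating the semigroup identity \eqref{eq5.14z} against the Chapman--Kolmogorov identity for the Gaussian heat kernel. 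Both routes are viable; the paper's iteration is cleaner because it never has to trade $e^{2\alpha R_c}$ against the Gaussian exponent, while your direct argument must use a Young inequality of the form $\alpha R_c\le\tfrac12\alpha^2(t-s)+\tfrac12 R_c^2/(t-s)$ and verify that this only shrinks $\kappa$ by a controlled factor. (Also minor: the weight $\psi(z)=\alpha(z-y)\cdot(x-y)/|x-y|$ you optimize over is unbounded, and the weighted energy estimate needs a bounded Lipschitz $\psi$; you must cap it at level $\alpha|x-y|$, as the paper does with $\psi_0$, before taking the limit.)

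The genuine gap is the proof of the conservation law \eqref{eq5.99}. You propose to show that $\vec W(t,x):=\int_{\bR^n}\vec\Gamma(t,x,s,y)\,dy$ is a bounded weak solution of $\cL\vec W=0$ with $\vec W(t,\cdot)\to\vec I$ as $t\to s^+$, and then invoke ``Tikhonov-type uniqueness among bounded Cauchy solutions.'' This does not go through in the paper's framework. Tikhonov uniqueness in the bounded class is a Phragm\'en--Lindel\"of/maximum-principle phenomenon and is not available for genuine \emph{systems}; the whole point of the paper, stated in the Introduction, is to avoid the maximum principle so that the argument applies in the vectorial case. The $\cite{LSU}$ reference you point to gives uniqueness in the energy class $V_2$, but $\vec W-\vec I$ is not in $V_2(\bR^{n+1})$ (the constant $\vec I$ has no $L^2$-in-$x$ decay), so none of the uniqueness statements in the paper apply. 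You also do not actually verify that $\vec W$ is a weak solution: $\vec\Gamma(\cdot,Y)$ carries a $\delta_Y$-source in the weak formulation \eqref{eq5.22}, and showing that this source disappears after integrating in $y$ requires a Fubini argument across a nonintegrable singularity, which is exactly where the work lies. The paper instead proves \eqref{eq5.99} directly by inserting the test function $\zeta((\cdot-x)/R)\vec e_l$ into the integrated weak formulation \eqref{eq3.431} of ${}^t\!\vec\Gamma(\cdot,X)$, identifying the principal term as $\int_{\bR^n}\Gamma_{kl}(t,x,s,y)\zeta((y-x)/R)\,dy$, and showing via a Caccioppoli estimate on the annulus $B_{2R}(x)\setminus B_R(x)$ together with the Gaussian bound \eqref{eq5.27} that the remaining term is $O(R^{-1}(t-s)^{1/2})\to 0$ as $R\to\infty$. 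To repair your proposal you would either have to replicate this direct computation, or establish a uniqueness theorem for bounded solutions of strongly parabolic systems by exponentially weighted energy estimates (a Gaffney-type argument, in the spirit of Lemma~\ref{lem4.1}) --- neither of which you supply.
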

\begin{remark}
In fact, \eqref{eq5.14z} also holds for $\Omega\neq \bR^n$;
see Section~\ref{sec:5-2}.
\end{remark}

\mysection{Proof of Theorem \ref{thm1}: when $\Omega=\bR^n$ and $R_c=\infty$}
\label{sec3}
In this section we shall prove Theorem~\ref{thm1} under
the assumption that $\Omega=\bR^n$ and that the operators $\cL$ and $\cLt$
satisfy the property (PH) with $R_c=\infty$.
Consequently, we have $\U=\bR^{n+1}$ and $d_X=\bar{d}_X=\infty$
for all $X\in \bR^{n+1}$.
Throughout this section, we employ the letter $C$ to denote a constant
depending on $n$, $N$, $\lambda$, $\Lambda$, $\mu_0$, $C_0$, and sometimes 
on an exponent $p$ characterizing Lebesgue classes.
\subsection{Averaged fundamental matrix}  \label{sec0301}
Our approach here is an adaptation of that in Hofmann-Kim \cite{HofKim1}, which
in turn is partly based on the method by Gr\"uter and Widman \cite{GW}.
Let $Y=(s,y)\in \bR^{n+1}$ and $1\le k \le N$ be fixed.
For each $\rho>0$, fix $s_0\in (-\infty,s-\rho^2)$.
We consider the problem
\begin{equation} \label{eq1.39}
\left\{\begin{array}{l l}
\cL \vec{u}=\frac{1}{\abs{Q^{-}_\rho}}1_{Q^-_\rho(Y)} \vec{e}_k\\
\vec{u}(s_0,\cdot)= 0,\end{array}\right.
\end{equation}
where $\vec{e}_k$ is the $k$-th unit vector.
By Lemma~\ref{lem13.1}, we find that the problem \eqref{eq1.39}
has a unique weak solution $\vec{v}_\rho=\vec{v}_{\rho;Y,k}$
in $\rV^{1,0}_2((s_0,\infty)\times\bR^n)^N$.
Moreover, by the uniqueness, we find that $\vec{v}_\rho$ does not depend
on the particular choice of $s_0$ and we may extend $\vec{v}_\rho$ to
the entire $\bR^{n+1}$ by setting
\begin{equation} \label{eq11.54}
\vec{v}_\rho\equiv 0\quad\text{on}\quad(-\infty,s-\rho^2)\times \bR^n.
\end{equation}
Then $\vec{v}_\rho \in \rV^{1,0}_2(\bR^{n+1})^N$ and satisfies for
all $t_1>s$ the identity
\begin{equation} \label{eq2.24}
\int_{\bR^n}v_\rho^i \phi^i(t_1,\cdot)
-\int_{-\infty}^{t_1}\int_{\bR^n}v_\rho^i \phi^i_t
+\int_{-\infty}^{t_1}\int_{\bR^n}
A^{\alpha\beta}_{ij}D_\beta v_\rho^j D_\alpha \phi^i
= \dashint_{Q^-_\rho(Y)}\phi^k
\end{equation}
for all $\vec{\phi}\in C^\infty_{c,p}(\bR^{n+1})^N$.
We define the \textit{averaged fundamental matrix}
$\vec{\Gamma}^\rho(\cdot,Y)=(\Gamma^\rho_{jk}(\cdot,Y))_{j,k=1}^N$
for $\cL$ by
\begin{equation*}    
\Gamma^\rho_{jk}(\cdot,Y)=v_\rho^j=v^j_{\rho;Y,k}.
\end{equation*}

Next, for each $\vec{f}\in L^\infty_c(\bR^{n+1})^N$,
let us fix $t_0$ such that $\vec{f}\equiv 0$ on $[t_0,\infty)\times \bR^n$.
We consider the backward problem
\begin{equation} \label{eq2.05}
\left\{\begin{array}{l l}
\cLt \vec{u}=\vec{f}\\
\vec{u}(t_0,\cdot)= 0.\end{array}\right.
\end{equation}
Again, by Lemma~\ref{lem13.1} we obtain a unique weak solution $\vec{u}$
in $\rV^{1,0}_2((-\infty,t_0)\times\bR^n)^N$ of the problem \eqref{eq2.05} and
we may extend $\vec{u}$ to the entire $\bR^{n+1}$ by setting
$\vec{u}\equiv 0$ on $(t_0,\infty)\times\bR^n$.
Then, $\vec{u}\in \rV^{1,0}_2(\bR^{n+1})^N$ and satisfies for
all $t_1$ the identity
\begin{equation} \label{eq2.35}
\int_{\bR^n}u^i \phi^i(t_1,\cdot)
+\int_{t_1}^\infty\!\int_{\bR^{n}}u^i \phi^i_t
+\int_{t_1}^\infty\!\int_{\bR^n}
{}^t\!A^{\alpha\beta}_{ij}D_\beta u^j D_\alpha \phi^i
=\int_{t_1}^\infty\!\int_{\bR^n} f^i \phi^i
\end{equation}
for all $\vec{\phi}\in C^\infty_{c,p}(\bR^{n+1})$.

\begin{lemma} \label{lem3.1.1}
For $\vec{v}_\rho$ and $\vec{u}$ constructed above, we have
\begin{gather} \label{eq5.58}
\tri{\vec{v}_\rho}_{\bR^{n+1}}\leq C\abs{Q_\rho^-(Y)}^{-\frac{n}{2n+4}},\\
\label{eq2.00}
\tri{\vec{u}}_{\bR^{n+1}}
\leq C \norm{\vec{f}}_{L^{\frac{2n+4}{n+4}}(\bR^{n+1})},\\
\label{eq2.17}
\int_{\bR^{n+1}} \vec{v}_\rho\cdot\vec{f}
=\dashint_{Q^-_\rho(Y)} u^k.
\end{gather}
\end{lemma}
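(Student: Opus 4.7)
The first two bounds are energy inequalities; the third is a duality identity. For \eqref{eq5.58}, substitute $\vec\phi=\vec v_\rho$ into the weak formulation \eqref{eq2.24} for its own equation. Because $\vec v_\rho\in\rV^{1,0}_2(\bR^{n+1})^N$ does not a priori possess an $L^2$ time derivative while \eqref{eq2.24} is stated for $\vec\phi\in C^\infty_{c,p}$, this substitution must be justified by a Steklov average in $t$ (as in \cite[\S II.4]{LSU}) and a standard limit. Using $v^i_\rho(v^i_\rho)_t=\tfrac12\partial_t\abs{\vec v_\rho}^2$ together with the strong parabolicity \eqref{eqn:P-02} gives, for every $t_1\in\bR$,
\begin{equation*}
\tfrac12\norm{\vec v_\rho(t_1,\cdot)}_{L^2(\bR^n)}^2+\lambda\int_{-\infty}^{t_1}\!\!\int_{\bR^n}\abs{D\vec v_\rho}^2\leq\Bigl|\dashint_{Q_\rho^-(Y)}v_\rho^k\Bigr|.
\end{equation*}
Taking the supremum in $t_1$ and adding the two terms yields $\tri{\vec v_\rho}_{\bR^{n+1}}^2\leq C\dashint_{Q_\rho^-(Y)}\abs{v_\rho^k}$. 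Then H\"older's inequality with conjugate exponents $(2n+4)/n$ and $(2n+4)/(n+4)$, followed by the embedding \eqref{eqn:2.2}, gives
\begin{equation*}
\dashint_{Q_\rho^-(Y)}\abs{v_\rho^k}\leq\abs{Q_\rho^-(Y)}^{-n/(2n+4)}\norm{v_\rho^k}_{L^{(2n+4)/n}(\bR^{n+1})}\leq C\abs{Q_\rho^-(Y)}^{-n/(2n+4)}\tri{\vec v_\rho}_{\bR^{n+1}},
\end{equation*}
and dividing through proves \eqref{eq5.58}.

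For \eqref{eq2.00}, apply the same scheme to \eqref{eq2.35}: testing $\vec\phi=\vec u$ (again after Steklov regularization) and letting $t_1\to-\infty$, using that $\vec u$ vanishes on $[t_0,\infty)\times\bR^n$, produces
\begin{equation*}
\tri{\vec u}_{\bR^{n+1}}^2\leq C\int_{\bR^{n+1}}\abs{\vec u}\abs{\vec f}\leq C\norm{\vec u}_{L^{(2n+4)/n}(\bR^{n+1})}\norm{\vec f}_{L^{(2n+4)/(n+4)}(\bR^{n+1})}\leq C\norm{\vec f}_{L^{(2n+4)/(n+4)}}\tri{\vec u}_{\bR^{n+1}}
\end{equation*}
by H\"older and \eqref{eqn:2.2}, which gives \eqref{eq2.00}.

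For the duality \eqref{eq2.17}, cross-test: substitute $\vec\phi=\vec u$ into \eqref{eq2.24} at any $t_1\geq t_0$ (so the boundary term $\int v_\rho^i u^i(t_1,\cdot)$ vanishes), and $\vec\phi=\vec v_\rho$ into \eqref{eq2.35} at any $t_1\leq s-\rho^2$ (so the corresponding boundary term vanishes). Since ${}^t\!A^{\alpha\beta}_{ij}=A^{\beta\alpha}_{ji}$, a relabeling of indices shows that the two spatial bilinear forms are identical. Subtracting one identity from the other leaves exactly
\begin{equation*}
-\int_{\bR^{n+1}}\partial_t(v_\rho^i u^i)=\dashint_{Q_\rho^-(Y)}u^k-\int_{\bR^{n+1}}f^i v_\rho^i,
\end{equation*}
and the left-hand side is zero by the support properties of $\vec u$ and $\vec v_\rho$ (both products vanish at $t=\pm\infty$). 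This yields \eqref{eq2.17}.

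\textbf{Main obstacle.} The only nontrivial point is the test-function justification: \eqref{eq2.24} and \eqref{eq2.35} admit only $C^\infty_{c,p}$ test functions, while all three steps above require substituting $\vec u$ or $\vec v_\rho$ (mutually or themselves). This is handled by a Steklov average in the time variable, which produces smooth-in-$t$ approximations with integrable time derivatives; the algebra above then passes cleanly to the limit, since the support properties of $\vec u$ and $\vec v_\rho$ in $t$ are preserved by the regularization and all spatial derivatives are already in $L^2$.
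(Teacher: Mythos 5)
Your proposal is correct and follows the same route as the paper's own (very terse) proof: the paper simply invokes the energy inequality plus the embedding \eqref{eqn:2.2} for \eqref{eq5.58}--\eqref{eq2.00} and ``standard approximation techniques'' applied to \eqref{eq2.24} and \eqref{eq2.35} for \eqref{eq2.17}, citing \cite{LSU} and \cite{Lieberman}; your write-up merely fills in the details those references contain --- testing each function against itself via Steklov averages and applying H\"older with exponents $(2n+4)/n$, $(2n+4)/(n+4)$, and cross-testing the two weak formulations with the index relabeling ${}^t\!A^{\alpha\beta}_{ij}=A^{\beta\alpha}_{ji}$ so the bilinear forms cancel, leaving the total-time-derivative term that vanishes by the supports of $\vec u$ and $\vec v_\rho$.
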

\begin{proof}
The energy inequality (see e.g., \cite[\S III.2]{LSU})
together with \eqref{eqn:2.2} yields
\eqref{eq5.58} and \eqref{eq2.00}.
The identity \eqref{eq2.17} follows from \eqref{eq2.24} and \eqref{eq2.35}
accompanied by standard approximation techniques
(see e.g., \cite[\S III.2]{LSU} or \cite[\S VI.1]{Lieberman} for details).
\end{proof}

\subsection{$L^\infty$ estimate for averaged fundamental matrix}
\label{sec3.2}
Let $\vec{u}\in\rV^{1,0}_2(\bR^{n+1})^N$ be constructed as above
with $\vec f\in L^\infty_c(\bR^{n+1})^N$.
Fix $X_0\in \bR^{n+1}$, $R>0$, $X\in Q^+_R(X_0)$, and $r\in (0,R]$.
We decompose $\vec{u}=\vec{u}_1+\vec{u}_2$, where $\vec{u}_2$
is the unique weak solution in $V^{1,0}_2(Q^{+}_r(X))$ of
$\cLt \vec{u}_2=\vec{f}$ in $Q^{+}_r(X)$
with zero boundary condition on $\partial_p Q^{+}_r(X)$.
Then, $\vec{u}_1=\vec{u}-\vec{u}_2$ satisfies
$\cLt \vec{u}_1=0$ in $Q^{+}_r(X)$, and thus, for $0<\delta<r$,
\begin{align}
\int_{Q^+_\delta(X)}\abs{D\vec{u}}^2&\leq
2\int_{Q^+_\delta(X)}\abs{D\vec{u}_1}^2
+2\int_{Q^+_\delta(X)}\abs{D\vec{u}_2}^2\nonumber\\
&\leq C(\delta/r)^{n+2\mu_0}\int_{Q^+_r(X)}\abs{D\vec{u}_1}^2
+2\int_{Q^+_r(X)}\abs{D\vec{u}_2}^2\nonumber\\
\label{eq7.32}
&\leq C(\delta/r)^{n+2\mu_0}\int_{Q^+_r(X)}\abs{D\vec{u}}^2
+C\int_{Q^+_r(X)}\abs{D\vec{u}_2}^2.
\end{align}
For a given $p>(n+2)/2$, choose $p_0\in ((n+2)/2,p)$ such that
\begin{equation*}    
\mu_1:=2-(n+2)/p_0<\mu_0.
\end{equation*}
As in \eqref{eq2.00}, we have
\begin{equation} \label{eq7.33}
\int_{Q^+_r(X)}\abs{D\vec{u}_2}^2\leq
C\norm{\vec f}^2_{L^{\frac{2n+4}{n+4}}(Q^+_r(X))}
\leq C r^{n+2\mu_1} \norm{\vec f}^2_{L^{p_0}(Q^+_r(X))}.
\end{equation}
Combining \eqref{eq7.32} with \eqref{eq7.33}, we get for all $\delta<r\le R$,
\begin{equation*}
\int_{Q^+_\delta(X)}\abs{D\vec{u}}^2\leq
C(\delta/r)^{n+2\mu_0}\int_{Q^+_r(X)}\abs{D\vec{u}}^2+
Cr^{n+2\mu_1}\norm{\vec f}^2_{L^{p_0}(Q^+_r(X))}.
\end{equation*}
Then, by a well known iteration argument
(see e.g., \cite[Lemma 2.1, p. 86]{Giaq83}),
\begin{equation} \label{eq3.30}
\int_{Q^+_r(X)}\abs{D\vec{u}}^2\leq
C(r/R)^{n+2\mu_1}\int_{Q^+_R(X)}\abs{D\vec{u}}^2+
Cr^{n+2\mu_1}\norm{\vec f}^2_{L^{p_0}(Q^+_R(X))}.
\end{equation}
By Lemma \ref{lem3.1}, \eqref{eq3.30}, and H\"older's inequality, we get
\begin{equation*}
\int_{Q_r^+(X)}\abs{\vec{u}-\bar{\vec{u}}_r}^2\leq
Cr^{2+n+2\mu_1}\left(R^{-n-2\mu_1}\norm{D\vec u}^2_{L^2(Q_R^+(X))}
+\norm{\vec f}^2_{L^{p_0}(Q_R^+(X))}\right).
\end{equation*}
Then, from Lemma \ref{lem4} and \eqref{eq2.00}, it follows that
\begin{align}
[\vec{u}]^2_{C^{\mu_1,\mu_1/2}(Q^+_R(X_0))}&\leq
C\left(R^{-n-2\mu_1}\norm{D\vec u}^2_{L^2(\bR^{n+1})}
+\norm{\vec f}^2_{L^{p_0}(\bR^{n+1})}\right)\nonumber \\
\label{eq10.191}
&\leq C\left(R^{-n-2\mu_1}\norm{\vec f}^2_{L^{\frac{2n+4}{n+4}}(\bR^{n+1})}
+\norm{\vec f}^2_{L^{p_0}(\bR^{n+1})}\right).
\end{align}
By H\"older's inequality, \eqref{eqn:2.2}, and \eqref{eq2.00},
\begin{equation} \label{eq10.28}
\norm{\vec u}^2_{L^2(Q^+_R(X_0))}\leq
C R^2\norm{\vec f}^2_{L^{\frac{2n+4}{n+4}}(\bR^{n+1})}.
\end{equation}
By \eqref{eq10.191}, \eqref{eq10.28}, and a standard averaging method
(see e.g., \cite{HofKim2}), we obtain
\begin{align}
\nonumber
\norm{\vec u}_{L^\infty(Q_{R/2}^+(X_0))}^2&\leq
CR^{2\mu_1}[\vec{u}]^2_{C^{\mu_1,\mu_1/2}(Q^+_R(X_0))}+CR^{-n-2}
\norm{\vec u}_{L^2(Q^+_R(X_0))}^2\\
\label{eq07zz}
&\leq C R^{-n}\norm{\vec f}^2_{L^{\frac{2n+4}{n+4}}(\bR^{n+1})}
+CR^{2\mu_1}\norm{\vec f}^2_{L^{p_0}(\bR^{n+1})}.
\end{align}
In the remaining part of this subsection, we shall assume that $\vec{f}$
is supported in $Q^+_R(X_0)$.
Recall that $p>(n+2)/2$.
We apply H\"older's inequality in \eqref{eq07zz} to get
\begin{equation} \label{eq2.19}
\norm{\vec u}_{L^\infty(Q_{R/2}^+(X_0))}
\leq C R^{2-(n+2)/p}\norm{\vec f}_{L^p(Q_R^+(X_0))}.
\end{equation}

If $Q^-_\rho(Y)\subset Q^+_{R/2}(X_0)$, then \eqref{eq2.17} together with
\eqref{eq2.19} yields
\begin{equation*}  
\abs{\int_{Q^+_R(X_0)}\vec{v}^\rho \cdot \vec{f}} \leq
\dashint_{Q^-_\rho(Y)}\abs{\vec u}\leq
CR^{2-(n+2)/p}\norm{\vec f}_{L^{p}(Q^+_R(X_0))}
\end{equation*}
for any $p>(n+2)/2$.
By duality, it follows that if $Q^-_\rho(Y)\subset Q^+_{R/2}(X_0)$, then
\begin{equation} \label{eq11.09}
\norm{\vec v_\rho}_{L^q(Q^+_R(X_0))}\leq CR^{-n+(n+2)/q}
\quad \forall q\in [1, (n+2)/n).
\end{equation}
Consequently, we obtain the following pointwise estimate for
the averaged fundamental matrices, which is the main result of this
subsection.

\begin{lemma} \label{lem3.2.5}
Let $X=(t,x)$, $Y=(s,y)$, and assume $X\neq Y$.
Then
\begin{equation} \label{eq11.16}
\abs{\vec{\Gamma}^\rho(X,Y)}\leq C \abs{X-Y}_p^{-n}
\quad \forall\rho\leq \abs{X-Y}_p/3.
\end{equation}
\end{lemma}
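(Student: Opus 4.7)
The plan is to combine the local sup-bound \eqref{eqn:P-11} from Lemma~\ref{lem3}, applied on a backward cylinder at $X$ of radius comparable to $d:=|X-Y|_p$ on which $\cL\vec{v}_\rho=0$, with the $L^q$ estimate \eqref{eq11.09} applied on a forward cylinder whose lower half swallows the support $Q^-_\rho(Y)$ of the averaged source. First dispose of the trivial case $t\le s-\rho^2$: by construction \eqref{eq11.54} we have $\vec{v}_\rho(X)=0$. So from now on assume $t>s-\rho^2$.

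Choose $\tilde R:=d/4$ and verify that $Q^-_{\tilde R}(X)\cap Q^-_\rho(Y)=\emptyset$, which ensures $\cL\vec{v}_\rho=0$ on $Q^-_{\tilde R}(X)$. There are two cases. If $d=|x-y|$, then $\tilde R+\rho\le d/4+d/3<d=|x-y|$, giving spatial disjointness. If instead $d=\sqrt{|t-s|}>|x-y|$, then $t<s$ is impossible (it would force $t\le s-9\rho^2<s-\rho^2$, contradicting our standing hypothesis), so $t-s=d^2$ and $t-\tilde R^2=t-d^2/16>s$, giving temporal disjointness.

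Next, set $X_0:=(s-\rho^2,y)$ and $R:=2d$. A routine check shows $Q^-_\rho(Y)\subset Q^+_{R/2}(X_0)$ (space: $\rho\le d$; time: $s\le s-\rho^2+d^2$), so \eqref{eq11.09} with $q=1$ gives
\begin{equation*}
\|\vec{v}_\rho\|_{L^1(Q^+_R(X_0))}\le C R^{2}\le C d^{2}.
\end{equation*}
Because $\vec{v}_\rho\equiv 0$ on $(-\infty,s-\rho^2)\times\bR^n$ and because $Q^-_{\tilde R}(X)\cap\{t>s-\rho^2\}\subset Q^+_R(X_0)$ (space: $\tilde R+|x-y|\le d/4+d<2d=R$; time: $t\le s+d^2\le s-\rho^2+4d^2$), this gives $\|\vec{v}_\rho\|_{L^1(Q^-_{\tilde R}(X))}\le Cd^{2}$. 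Now invoke \eqref{eqn:P-11} with $p=1$ and $r=\tilde R/2$, so that $R-r=\tilde R/2\simeq d$:
\begin{equation*}
\|\vec{v}_\rho\|_{L^\infty(Q^-_{\tilde R/2}(X))}\le \frac{C}{(\tilde R/2)^{n+2}}\,\|\vec{v}_\rho\|_{L^1(Q^-_{\tilde R}(X))}\le C d^{-n-2}\cdot d^{2}=C d^{-n}.
\end{equation*}
Finally, \eqref{eqP.24} applied on $Q^-_{\tilde R}(X)$ gives H\"older continuity of $\vec{v}_\rho$ up to the top vertex, so the sup-norm bound transfers to the pointwise value $|\vec{v}_\rho(X)|=|\vec\Gamma^\rho(X,Y)|\le Cd^{-n}$, as desired.

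The only delicate point is the geometric bookkeeping in step two: producing a single radius $\tilde R\simeq d$ for which $Q^-_{\tilde R}(X)$ is guaranteed to miss $Q^-_\rho(Y)$ in \emph{either} the space or the time variable (never both simultaneously), while still sitting, modulo the region $\{t<s-\rho^2\}$ where $\vec{v}_\rho$ vanishes, inside a forward cylinder $Q^+_R(X_0)$ to which \eqref{eq11.09} applies. Everything else is a direct application of the already-proved estimates \eqref{eq11.09} and \eqref{eqn:P-11}.
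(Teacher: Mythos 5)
Your proof is correct, and it takes essentially the same route as the paper's: apply the interior $L^1$-to-$L^\infty$ estimate from Lemma~\ref{lem3} on a backward cylinder at $X$ of radius $\simeq d$ on which $\vec{v}_\rho$ solves the homogeneous system, then control the $L^1$ norm on that cylinder via \eqref{eq11.09} on a forward cylinder $Q^+_R(X_0)$ whose lower half contains $Q^-_\rho(Y)$. The only differences are cosmetic choices of radii and of $X_0$: the paper takes $X_0=(s-4d^2,y)$, $r=d/3$, $R=20d/3$, which yields the clean inclusion $Q^-_r(X)\subset Q^+_R(X_0)$ outright, whereas your $X_0=(s-\rho^2,y)$, $R=2d$ only gives $Q^-_{\tilde R}(X)\cap\{\tau>s-\rho^2\}\subset Q^+_R(X_0)$, which you correctly repair by invoking the vanishing of $\vec{v}_\rho$ for $\tau\le s-\rho^2$. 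You are also slightly more explicit than the paper in invoking \eqref{eqP.24} to justify reading off the pointwise value at the top vertex $X$; the paper silently applies the $L^\infty$ bound directly at $X$.
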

\begin{proof}
Denote $d=\abs{X-Y}_p$ and let $X_0=(s-4d^2,y)$, $r=d/3$, and $R=20r$.
It is easy to see that (recall $\rho\leq r$)
\begin{equation*}
Q^{-}_\rho(Y)\subset Q^{+}_{R/2}(X_0),\quad Q^{-}_r(X)\subset Q^{+}_{R}(X_0).
\end{equation*}
Moreover, by \eqref{eq2.24}, $\vec{v}_\rho=\vec{v}_{\rho;Y,k}$ is a weak
solution of
$\cL \vec u=0$ in $Q^-_r(X)$.
Therefore, by Lemma \ref{lem3} and \eqref{eq11.09}, we have
\begin{equation*}
\abs{\vec{v}_\rho(X)}\leq Cr^{-n-2}\norm{\vec{v}_\rho}_{L^1(Q_r^{-}(X))}
\leq Cr^{-n-2}\norm{\vec{v}_\rho}_{L^1(Q_R^{+}(X_0))}\leq Cr^{-n}.
\end{equation*}
The lemma is proved.
\end{proof}

\subsection{Construction of the fundamental matrix}	\label{sec0303}
In next two lemmas, we derive $L^p$ estimates uniform in $\rho>0$
for averaged fundamental matrices $\vec\Gamma^\rho(\cdot,Y)$
and their spatial derivatives $D\vec\Gamma^\rho(\cdot,Y)$.
\begin{lemma} \label{lem3.3.2}
For any $Y\in\bR^{n+1}$ and any $\rho>0$, we have
\begin{equation} \label{eq11.17}
\abs{\set{X\in \bR^{n+1}:\abs{D_x\vec{\Gamma}^\rho(X,Y)}>\tau}}
\leq C \tau^{-\frac{n+2}{n+1}} \quad \forall \tau>0,
\end{equation} i.e., $\abs{D\vec{\Gamma}^\rho(\cdot,Y)}$
has a weak-$L^{(n+2)/(n+1)}$ estimate in $\bR^{n+1}$.
Moreover, for any $R>0$,  the following uniform $L^p$ estimate
for $D\vec\Gamma^\rho(\cdot,Y)$ holds:
\begin{equation} \label{eq11.29}
\norm{D\vec{\Gamma}^\rho(\cdot,Y)}_{L^p(Q_R(Y))} \leq C_p R^{-n-1+(n+2)/p}
\quad\forall\rho>0 \quad \forall p\in [1,\tfrac{n+2}{n+1}).
\end{equation}
\end{lemma}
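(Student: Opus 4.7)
The idea is to deduce both the weak-type bound \eqref{eq11.17} and the $L^p$ bound \eqref{eq11.29} from a single ``tail'' energy estimate of the form
\begin{equation*}
\int_{\bR^{n+1}\setminus Q_r(Y)}\abs{D\vec{\Gamma}^\rho(\cdot,Y)}^2 \leq C r^{-n}\quad\forall r>0,
\end{equation*}
combined with the trivial volume bound $\abs{Q_r(Y)}\leq Cr^{n+2}$.

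First I would establish the tail estimate. When $r$ is comparable to or smaller than $\rho$, the global energy inequality \eqref{eq5.58} already suffices, since it gives $\int\abs{D\vec{\Gamma}^\rho}^2\leq \tri{\vec v_\rho}^2\leq C\abs{Q_\rho^-}^{-n/(n+2)}=C\rho^{-n}\leq Cr^{-n}$ in that regime. The substantive case is $r\gtrsim\rho$, say $r\geq 6\rho$. There I would use that $\cL\vec{\Gamma}^\rho(\cdot,Y)\equiv 0$ outside $Q^-_\rho(Y)$, so on each parabolic dyadic annulus $A_j=Q_{2^{j+1}r}(Y)\setminus Q_{2^j r}(Y)$ ($j\geq 0$) a standard parabolic Caccioppoli inequality (obtained by testing the equation against $\eta_j^2\vec{\Gamma}^\rho$, where $\eta_j$ is a cutoff supported in a slightly enlarged annulus $A_j^*$ with $\abs{D\eta_j}\lesssim(2^jr)^{-1}$ and $\abs{\partial_t\eta_j}\lesssim(2^jr)^{-2}$) gives
\begin{equation*}
\int_{A_j}\abs{D\vec{\Gamma}^\rho}^2\leq C(2^jr)^{-2}\int_{A_j^*}\abs{\vec{\Gamma}^\rho}^2.
\end{equation*}
Since $\abs{X-Y}_p\geq r/2\geq 3\rho$ on $A_j^*$, Lemma~\ref{lem3.2.5} gives the pointwise bound $\abs{\vec{\Gamma}^\rho}\leq C(2^jr)^{-n}$, and combining with $\abs{A_j^*}\leq C(2^jr)^{n+2}$ turns the right-hand side into $C(2^jr)^{-n}$. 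Summing the geometric series in $j\geq 0$ yields the tail estimate.

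To obtain \eqref{eq11.17}, fix $\tau>0$ and set $r=\tau^{-1/(n+1)}$. Writing $E_\tau=\{X:\abs{D_x\vec{\Gamma}^\rho(X,Y)}>\tau\}$, I split
\begin{equation*}
\abs{E_\tau}\leq \abs{Q_r(Y)}+\tau^{-2}\int_{\bR^{n+1}\setminus Q_r(Y)}\abs{D\vec{\Gamma}^\rho}^2 \leq Cr^{n+2}+C\tau^{-2}r^{-n},
\end{equation*}
and the choice of $r$ balances the two terms at $C\tau^{-(n+2)/(n+1)}$, as required.

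Finally, \eqref{eq11.29} follows by a standard layer-cake argument: for $p\in[1,(n+2)/(n+1))$,
\begin{equation*}
\int_{Q_R(Y)}\abs{D\vec{\Gamma}^\rho}^p=p\int_0^\infty\tau^{p-1}\abs{E_\tau\cap Q_R(Y)}\,d\tau,
\end{equation*}
and using $\abs{E_\tau\cap Q_R(Y)}\leq\min(\abs{Q_R(Y)},C\tau^{-(n+2)/(n+1)})$, splitting the integral at an optimal $\tau_0$, and inserting $\abs{Q_R(Y)}\leq CR^{n+2}$ yields the stated bound. The main technical point is the Caccioppoli step in the tail estimate; the subsequent manipulations are bookkeeping.
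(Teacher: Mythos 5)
Your proposal follows essentially the same approach as the paper: both establish the tail energy estimate $\int_{\bR^{n+1}\setminus Q_r(Y)}\abs{D\vec\Gamma^\rho(\cdot,Y)}^2\leq Cr^{-n}$ by combining a Caccioppoli-type estimate with the pointwise bound of Lemma~\ref{lem3.2.5} (using \eqref{eq5.58} for the small-$r$ regime), then derive \eqref{eq11.17} by splitting the super-level set inside and outside $Q_r(Y)$ with $r=\tau^{-1/(n+1)}$, and \eqref{eq11.29} by a layer-cake argument. The only cosmetic difference is that you carry out the Caccioppoli step on a dyadic family of annuli and sum a geometric series, whereas the paper does it in one shot with a single cutoff $\zeta\in C^\infty_c(Q_R(Y))$ with $\zeta\equiv 1$ on $Q_{R/2}(Y)$, so that $D\zeta$ and $\zeta_t$ live in the single intermediate annulus where the pointwise estimate applies.
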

\begin{proof}
We claim that for all $R>0$, we have
\begin{equation} \label{eq11.299}
\tri{\vec{v}_\rho}_{\bR^{n+1}\setminus \overline Q_R(Y)}^2 \leq C R^{-n}
\quad \forall \rho>0.
\end{equation}
To prove the claim \eqref{eq11.299}, we only need to consider the case
$R>6\rho$.
Indeed, if $R\leq 6\rho$, then \eqref{eq5.58} yields
\begin{equation*}
\tri{\vec{v}_\rho}_{\bR^{n+1}\setminus \overline Q_R(Y)}^2
\leq \tri{\vec{v}_\rho}_{\bR^{n+1}}^2
\leq C\rho^{-n} \leq CR^{-n}.
\end{equation*}
Fix a cut-off function $\zeta\in C^\infty_c(Q_R(Y))$ such that
\begin{equation*}
\zeta\equiv 1\text{ on } Q_{R/2}(Y),\quad
0\leq \zeta \leq 1,\quad \abs{D\zeta}\leq C R^{-1},
\quad \abs{\zeta_t}\leq CR^{-2}.
\end{equation*}
By following a standard proof of the energy inequality
(see e.g., \cite[\S III.2]{LSU}), we derive from \eqref{eq1.39}
and \eqref{eq11.16} that
\begin{align} \nonumber
\sup_{t\in \bR}\int_{\bR^n}(1-\zeta)^2\abs{\vec{v}_\rho(t,\cdot)}^2
&+\int_{\bR^{n+1}}(1-\zeta)^2\abs{D\vec{v}_\rho}^2\\
\nonumber
&\leq C\int_{\bR^{n+1}}\left(\abs{D\zeta}^2+\abs{(1-\zeta)\zeta_t}\right)
\abs{\vec{v}_\rho}^2 \\
\label{eq5.59}
&\leq C R^{-2} \int_{\set{R/2<|X-Y|_p<R}}|X-Y|_p^{-2n}\,dX \leq CR^{-n}.
\end{align}
So, we have proved the claim \eqref{eq11.299}.
Note that we also obtain from \eqref{eq5.59}
\begin{equation}  \label{eq5.60}
\tri{(1-\zeta)\vec{v}_\rho}_{\bR^{n+1}}^2 \leq C R^{-n}
\quad\forall\rho>0.
\end{equation}
Now, let $A_\tau=\set{X\in\bR^{n+1}: \abs{D\vec{v}^\rho(X)}>\tau}$
and choose $R=\tau^{-1/(n+1)}$.
Then,
\begin{equation*}
\abs{A_\tau\setminus Q_R(Y)}\le \tau^{-2}
\int_{A_\tau\setminus Q_R(Y)}\abs{D\vec{v}^\rho}^2
\le C \tau^{-\frac{n+2}{n+1}}
\end{equation*}
and $\abs{A_\tau\cap Q_R(Y)}\le C R^{n+2}=C \tau^{-\frac{n+2}{n+1}}$.
We have thus proved \eqref{eq11.17}.
To show \eqref{eq11.29}, we first note that, for any $\tau>0$, we have
\begin{align}
\nonumber
\int_{Q_R(Y)} \abs{D\vec{v}_\rho}^p&=
\int_{Q_R(Y)\cap{\set{\abs{D\vec{v}^\rho}\le \tau}}}\abs{D\vec{v}_\rho}^p+
\int_{Q_R(Y)\cap{\set{\abs{D\vec{v}^\rho}> \tau}}}\abs{D\vec{v}_\rho}^p\\
\label{eqn:E326}
&\le  \tau^p\abs{Q_R}+
\int_{\set{\abs{D\vec{v}^\rho}> \tau}}\abs{D\vec{v}_\rho}^p.
\end{align}
By using \eqref{eq11.17} and the assumption $p\in [1,\tfrac{n+2}{n+1})$,
we estimate
\begin{align}
\nonumber
\int_{\set{\abs{D\vec{v}^\rho}> \tau}}\abs{D\vec{v}_\rho}^p
&=\int_0^\infty p t^{p-1}\abs{\set{\abs{D\vec{v}^\rho}> \max(t,\tau)}}\,dt\\
\nonumber
&\le C \tau^{-\frac{n+2}{n+1}}\int_0^\tau p t^{p-1}\,dt
+ C \int_\tau^\infty p t^{p-1-(n+2)/(n+1)}\,dt\\
\label{eqn:E327}
&=C\left(1-p/(p-\tfrac{n+2}{n+1})\right) \tau^{p-(n+2)/(n+1)}.
\end{align}
By setting $\tau=R^{-(n+1)}$ in \eqref{eqn:E326} and \eqref{eqn:E327}, we get
\begin{equation*}  
\int_{Q_R(Y)} \abs{D\vec{v}_\rho}^p\le
C R^{-(n+1)p+n+2},
\end{equation*}
from which \eqref{eq11.29} follows.
The lemma is proved.
\end{proof}

\begin{lemma} \label{lem3.3.1}
For any $Y\in\bR^{n+1}$ and any $\rho>0$, we have
\begin{equation} \label{eq11.07}
\abs{\set{X\in \bR^{n+1}:\abs{\vec{\Gamma}^\rho(X,Y)}>\tau}}
\leq C \tau^{-(n+2)/n} \quad \forall \tau>0,
\end{equation}
i.e., $\abs{\vec{\Gamma}^\rho(\cdot,Y)}$ has a weak-$L^{(n+2)/n}$ estimate.
Moreover, for any $R>0$,  the following uniform $L^p$ estimate
for $\vec\Gamma^\rho(\cdot,Y)$ holds:
\begin{equation} \label{eq11.27}
\norm{\vec{\Gamma}^\rho(\cdot,Y)}_{L^p(Q_R(Y))}\leq C_p R^{-n+(n+2)/p}
\quad\forall \rho>0 \quad \forall p\in [1,\tfrac{n+2}{n}).
\end{equation}
\end{lemma}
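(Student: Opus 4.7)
The plan is to follow the same template as the proof of Lemma~\ref{lem3.3.2}, replacing the role of the energy estimate for $D\vec{v}_\rho$ by the parabolic Sobolev embedding estimate \eqref{eqn:2.2} applied to $\vec{v}_\rho$ itself. The key preliminary step will be to show that for every $R>0$,
\begin{equation*}
\norm{\vec{\Gamma}^\rho(\cdot,Y)}_{L^{2+4/n}(\bR^{n+1}\setminus \overline Q_R(Y))}^{2+4/n}\leq C R^{-n-2} \quad\forall \rho>0.
\end{equation*}
This is the analogue of the estimate $\tri{\vec{v}_\rho}_{\bR^{n+1}\setminus \overline Q_R(Y)}^2\leq CR^{-n}$ proved in \eqref{eq11.299}, promoted to an $L^{2+4/n}$ bound via the embedding \eqref{eqn:2.2}.

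To establish this auxiliary bound, I split into the cases $R\leq 6\rho$ and $R>6\rho$, just as in the proof of Lemma~\ref{lem3.3.2}. For $R\leq 6\rho$, I apply \eqref{eqn:2.2} to $\vec{v}_\rho\in\rV^{1,0}_2(\bR^{n+1})^N$ directly and invoke \eqref{eq5.58} to obtain $\norm{\vec{v}_\rho}_{L^{2+4/n}(\bR^{n+1})}^{2+4/n}\leq C\tri{\vec{v}_\rho}_{\bR^{n+1}}^{2+4/n}\leq C\rho^{-(n+2)}\leq CR^{-n-2}$. For $R>6\rho$, I take the same cutoff $\zeta$ as in the proof of Lemma~\ref{lem3.3.2}; since $(1-\zeta)\vec{v}_\rho$ lies in $\rV^{1,0}_2(\bR^{n+1})^N$, the embedding \eqref{eqn:2.2} together with \eqref{eq5.60} yields $\norm{(1-\zeta)\vec{v}_\rho}_{L^{2+4/n}(\bR^{n+1})}^{2+4/n}\leq C\tri{(1-\zeta)\vec{v}_\rho}_{\bR^{n+1}}^{2+4/n}\leq CR^{-n-2}$, and since $1-\zeta\equiv 1$ off $Q_R(Y)$ this gives the desired bound.

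With the $L^{2+4/n}$ estimate in hand, the weak-type bound \eqref{eq11.07} follows exactly as in Lemma~\ref{lem3.3.2}. Setting $B_\tau=\set{X\in\bR^{n+1}:\abs{\vec{\Gamma}^\rho(X,Y)}>\tau}$ and choosing $R=\tau^{-1/n}$, I split
\begin{equation*}
\abs{B_\tau}\leq \abs{B_\tau\cap Q_R(Y)}+\abs{B_\tau\setminus Q_R(Y)};
\end{equation*}
the first piece is bounded by $\abs{Q_R(Y)}\leq CR^{n+2}=C\tau^{-(n+2)/n}$, while Chebyshev's inequality applied in $L^{2+4/n}$ gives $\abs{B_\tau\setminus Q_R(Y)}\leq \tau^{-(2n+4)/n}\cdot CR^{-n-2}=C\tau^{-(n+2)/n}$. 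Adding these proves \eqref{eq11.07}.

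Finally, \eqref{eq11.27} is obtained from \eqref{eq11.07} by the same layer-cake argument used to pass from \eqref{eq11.17} to \eqref{eq11.29}. Writing
\begin{equation*}
\int_{Q_R(Y)}\abs{\vec{\Gamma}^\rho(\cdot,Y)}^p\leq \tau^p\abs{Q_R(Y)}+\int_0^\infty p t^{p-1}\abs{\set{\abs{\vec{\Gamma}^\rho(\cdot,Y)}>\max(t,\tau)}}\,dt,
\end{equation*}
applying \eqref{eq11.07} to the tail integral (which converges precisely because $p<(n+2)/n$), and optimizing by choosing $\tau=R^{-n}$, the two terms balance and yield $\|\vec{\Gamma}^\rho(\cdot,Y)\|_{L^p(Q_R(Y))}^p\leq CR^{n+2-np}$, which is \eqref{eq11.27}. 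The only genuinely new ingredient beyond the pattern of Lemma~\ref{lem3.3.2} is the use of \eqref{eqn:2.2} to upgrade the $\tri{\,\cdot\,}$-norm control of $\vec{v}_\rho$ into an $L^{2+4/n}$ control; everything else is a direct transcription, so I do not expect any serious obstacle.
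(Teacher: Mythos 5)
Your proposal is correct and follows essentially the same route as the paper: it promotes the energy bound \eqref{eq5.60} to an $L^{2+4/n}$ estimate via the embedding \eqref{eqn:2.2} to get \eqref{eq11.277}, then uses Chebyshev with $R=\tau^{-1/n}$ for the weak-type bound and the layer-cake argument with $\tau=R^{-n}$ for the $L^p$ bound. The only slight difference is that you make the case split $R\le 6\rho$ versus $R>6\rho$ explicit again in this lemma (falling back on \eqref{eq5.58} directly when $R\le 6\rho$), whereas the paper simply cites \eqref{eq5.60} as if it were already established for all $\rho,R$; this is a minor point of bookkeeping in your favor, not a different method.
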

\begin{proof}
From \eqref{eq5.60} and \eqref{eqn:2.2} it follows that
\begin{equation*}
\norm{(1-\zeta)\vec{v}_\rho}_{L^{2+4/n}(\bR^{n+1})}\leq
\tri{(1-\zeta)\vec{v}_\rho}_{\bR^{n+1}} \leq C R^{-n/2}.
\end{equation*}
Since $\zeta\in C^\infty_c(Q_R(Y))$, we then find that for all $R>0$, we have
\begin{equation} \label{eq11.277}
\int_{\bR^{n+1}\setminus \overline Q_R(Y)}\abs{\vec{v}_\rho}{}^{2+4/n}
\leq C R^{-n-2} \quad \forall \rho>0.
\end{equation}
Let $A_\tau=\set{X\in\bR^{n+1}: \abs{\vec{v}^\rho(X)}>\tau}$
and set $R=\tau^{-1/n}$.
Then
\begin{equation*}
\abs{A_\tau\setminus Q_R(Y)}\le \tau^{-\frac{2n+4}{n}}
\int_{A_\tau\setminus Q_R(Y)}\abs{\vec{v}^\rho}{}^{2+4/n}
\le C \tau^{-\frac{2n+4}{n}} \tau^{\frac{n+2}{n}}
=C \tau^{-\frac{n+2}{n}}
\end{equation*}
and $\abs{A_\tau\cap Q_R(Y)}\le C R^{n+2}=C \tau^{-\frac{n+2}{n}}$.
We have proved \eqref{eq11.07}.
Next, by utilizing \eqref{eq11.07} instead of \eqref{eq11.17}
and proceeding as in the proof of Lemma~\ref{lem3.3.2}, we obtain
\begin{equation*}  
\int_{Q_R(Y)} \abs{\vec{v}_\rho}^p\leq C R^{-np+n+2},
\end{equation*}
from which \eqref{eq11.27} follows.
The lemma is proved.
\end{proof}

Let us fix $q\in (1,\tfrac{n+2}{n+1})$.
By \eqref{eq11.27} and \eqref{eq11.29}, we have
\begin{equation*}
\max_{1\leq i,j\leq N}
\|\Gamma^\rho_{ij}(\cdot,Y)\|_{W^{1,q}_x(Q_R^+(Y))}\leq C(R)<\infty
\quad \forall\rho>0.
\end{equation*}
By a diagonalization process, we obtain a sequence
$\{\rho_\mu\}_{\mu=1}^\infty$ tending to zero and a function
$\vec{\Gamma}(\cdot,Y)$,
which we shall call a fundamental matrix for $\cL$,
such that
\begin{equation} \label{eq11.46}
\vec{\Gamma}^{\rho_\mu}(\cdot,Y)\wto
\vec{\Gamma}(\cdot,Y)\quad\text{weakly in } W^{1,q}_x(Q_R(Y))^{N\times N}
\quad \forall R>0.
\end{equation}
Then, for any $\vec{\phi}\in C^\infty_c(\bR^{n+1})^N$, it follows from
\eqref{eq2.24}
\begin{align}
\nonumber
-\int_{\bR^{n+1}}&\Gamma_{ik}(\cdot,Y)\phi^i_t+\int_{\bR^{n+1}}
A^{\alpha\beta}_{ij}D_\beta\Gamma_{jk}(\cdot,Y)D_\alpha\phi^i\\
\nonumber
&\qquad=-\lim_{\mu\to\infty}\int_{\bR^{n+1}}
\Gamma^{\rho_\mu}_{ik}(\cdot,Y)\phi^i_t+
\lim_{\mu\to\infty}\int_{\bR^{n+1}}
A^{\alpha\beta}_{ij}D_\beta\Gamma^{\rho_\mu}_{jk}(\cdot,Y) D_\alpha \phi^i\\
\label{eqn:E-44}
&\qquad=\lim_{\mu\to\infty} \dashint_{Q^-_{\rho_\mu}(Y)}\phi^k = \phi^k(Y).
\end{align}
Let $\vec{v}_\rho$ be the $k$-th column of $\vec{\Gamma}^\rho(\cdot,y)$
as before, and let $\vec{v}$ be the corresponding $k$-th column
of $\vec{\Gamma}(\cdot,y)$.
Then, for any $\vec{g}\in L^\infty_{c}(Q_R(Y))^N$, \eqref{eq11.27} yields
\begin{equation}
\label{eqn:E-45}
\abs{\int_{Q_R(Y)}\vec{v}\cdot\vec{g}}
=\lim_{\mu\to\infty}\abs{\int_{Q_R(Y)}\vec{v}_{\rho_\mu}\cdot\vec{g}}
\le C_pR^{-n+(n+2)/p}\norm{\vec{g}}_{L^{p'}(Q_R(Y))},
\end{equation}
where $p'$ denotes the conjugate exponent of $p\in [1,\tfrac{n+2}{n})$.
Therefore, we obtain
\begin{equation}
\label{eqn:E-46}
\norm{\vec{\Gamma}(\cdot,Y)}_{L^p(Q_R(Y))}\le C_p\,R^{-n+(n+2)/p}
\quad \forall p\in [1,\tfrac{n+2}{n}).
\end{equation}
By a similar reasoning, we also have by \eqref{eq11.29}
\begin{equation}
\label{eqn:E-47}
\norm{D\vec{\Gamma}(\cdot,Y)}_{L^p(Q_R(Y))}\le C_p\,R^{-n-1+(n+2)/p}
\quad \forall p\in [1,\tfrac{n+2}{n+1}).
\end{equation}
With the aid of \eqref{eq11.277}, we also obtain
\begin{equation}
\label{eqn:E-48}
\norm{\vec{\Gamma}(\cdot,Y)}_{L^{2+4/n}(\bR^{n+1}\setminus \overline Q_R(Y))}^2
\leq C R^{-n}
\end{equation}
Also, by \eqref{eq5.60}, Lemma~\ref{lem:Ap01} in Appendix,
and the assumption $\zeta\in C^\infty_c(Q_R(Y))$, we find
(by passing to a subsequence in \eqref{eq11.46} if necessary)
\begin{equation}
\label{eqn:E-49}
\tri{\vec{\Gamma}(\cdot,Y)}^2_{\bR^{n+1}\setminus \overline Q_R(Y)}
\leq C R^{-n}.
\end{equation}
Moreover, arguing as in the proof of Lemma~\ref{lem3.3.1}
and Lemma~\ref{lem3.3.2}, we see that
the estimates \eqref{eqn:E-48} and \eqref{eqn:E-49} imply that
for all $Y\in \bR^{n+1}$, we have
\begin{gather}
\label{eqn:E-51}
\abs{\set{X\in\bR^{n+1}:\abs{\vec{\Gamma}(X,Y)}>\tau}}\le
C \tau^{-\frac{n+2}{n}},\\
\label{eqn:E-52}
\abs{\set{X\in\bR^{n+1}:\abs{D_x\vec{\Gamma}(X,Y)}>\tau}}\le
C \tau^{-\frac{n+2}{n+1}}.
\end{gather}
Next, we prove \eqref{eq5.21} in Theorem~\ref{thm1}.
Let $\eta\in C^\infty_c(\bR^{n+1})$ be such that
$\eta\equiv 1$ on $Q_r(Y)$ for some $r>0$.
Following the derivation of \eqref{eq5.60}, we find
\begin{equation}  \label{eq7.00}
\tri{(1-\eta)\vec{\Gamma}^\rho(\cdot,Y)}_{\bR^{n+1}}\le C(\eta)<\infty
\quad\text{uniformly in } \rho>0.
\end{equation}
Therefore, by passing to a subsequence if necessary, we may assume that
there is a function $\tilde{\vec\Gamma}(\cdot,Y)$ in
$\rV_2(\bR^{n+1})^{N\times N}$ such that
\begin{equation}  \label{eq7.01}
(1-\eta)\vec{\Gamma}^{\rho_\mu}(\cdot,Y)\wto
\tilde{\vec\Gamma}(\cdot,Y)
\quad\text{``very weakly'' in } V_2(\bR^{n+1})^{N\times N}.
\end{equation}
Note that \eqref{eq7.01} in particular implies that
\begin{equation*}
\lim_{\mu\to\infty}
\int_{\bR^{n+1}}(1-\eta)\vec{\Gamma}^{\rho_\mu}(\cdot,Y)\vec{\phi}
=\int_{\bR^{n+1}} \tilde{\vec\Gamma}(\cdot,Y) \vec{\phi}\quad
\forall \vec{\phi}\in C^\infty_c(\bR^{n+1})^N.
\end{equation*}
On the other hand, by \eqref{eq11.46} we find
for all $\vec{\phi}\in C^\infty_c(\bR^{n+1})^N$,
\begin{align*}
\lim_{\mu\to\infty}
\int_{\bR^{n+1}}(1-\eta)\vec{\Gamma}^{\rho_\mu}(\cdot,Y)\vec{\phi}
&=\lim_{\mu\to\infty}
\int_{\bR^{n+1}}\vec{\Gamma}^{\rho_\mu}(\cdot,Y)(1-\eta)\vec{\phi}\\
&=\int_{\bR^{n+1}} \vec\Gamma(\cdot,Y)(1-\eta)\vec{\phi}
=\int_{\bR^{n+1}}(1-\eta)\vec\Gamma(\cdot,Y)\vec{\phi}.
\end{align*}
Therefore,
$\tilde{\vec\Gamma}(\cdot,Y)\equiv(1-\eta)\vec{\Gamma}(\cdot,Y)$,
and thus $(1-\eta)\vec{\Gamma}(\cdot,Y)\in \rV_2(\bR^{n+1})^{N\times N}$.
Now, we will show
$(1-\eta)\vec{\Gamma}(\cdot,Y)\in \rV^{1,0}_2(\bR^{n+1})^{N\times N}$.
To see this, let $\vec v$ be the $k$-th column of
$\vec\Gamma(\cdot,Y)$ and set $\vec u=(1-\eta)\vec v$.
Fix $t_0$ such that $\eta\equiv 0$ on $(-\infty,t_0]\times \bR^n$.
Observe that for all $T>t_0$,
$\vec u$ is a weak solution in $\rV_2((t_0,T)\times\bR^n)^N$ of the problem
\begin{equation*}
\left\{\begin{array}{l l}
\cL \vec u=\vec f + D_\alpha \vec g_\alpha\\
\vec{u}(t_0,\cdot)= 0,\end{array}\right.
\end{equation*}
where $\vec f:=-\eta_t\vec v+D_\alpha\eta\vec{A}^{\alpha\beta}D_\beta\vec v$
and $\vec g_\alpha:=D_\beta\eta\vec{A}^{\alpha\beta}\vec v$.
Since $\eta\equiv 1$ on $Q_r(Y)$ and $\eta$ is compactly supported,
\eqref{eqn:E-49} implies that
$\vec f, \vec g_\alpha \in L^2((t_0,T)\times\Omega)^N$.
Then, by Theorem~4.2 in \cite[\S III.4]{LSU},
we have $\vec u \in \rV^{1,0}_2((t_0,T)\times\Omega)^N$.
Note that \eqref{eq7.00} implies $\tri{\vec u}_{(t_0,T)\times\bR^n}\le C(\eta)$.
Since $T>t_0$ is arbitrary, we have
$\vec u \in \rV^{1,0}_2((t_0,\infty)\times\Omega)^N$.
Moreover, as in Section~\ref{sec0301}, if we extend $\vec u$ to the
entire $\bR^{n+1}$ by setting $\vec u\equiv 0$ on $(-\infty,t_0)\times \bR^n$,
then we have $\vec u\in \rV^{1,0}_2(\bR^{n+1})^N$.
We have proved \eqref{eq5.21}.

\subsection{Continuity of the fundamental matrix}  \label{sec:3.4}
With the aid of \eqref{eqP.24}, it follows from \eqref{eqn:E-44}
and \eqref{eqn:E-49} that $\vec{\Gamma}(\cdot,Y)$
is H\"older continuous in $\bR^{n+1}\setminus\set{Y}$.
In fact, by the same reasoning, it follows
from \eqref{eq2.24} and \eqref{eq11.299} that
on any compact set $K\Subset\bR^{n+1}\setminus\set{Y}$, the sequence
$\set{\vec{\Gamma}^{\rho_\mu}(\cdot,Y)}_{\mu=1}^\infty$
is equicontinuous on $K$.
Also, by Lemma~\ref{lem3} and \eqref{eq11.277}, we find that
there are $C_K<\infty$ and $\rho_K>0$ such that
\begin{equation*}  
\|\vec{\Gamma}^{\rho}(\cdot,Y)\|_{L^\infty(K)} \leq C_K
\quad \forall \rho<\rho_K \quad
\text{for any compact } K\Subset \bR^{n+1}\setminus\set{Y}.
\end{equation*}
Therefore, we may assume, by passing to a subsequence if necessary, that
\begin{equation} \label{eq3.420}
\vec{\Gamma}^{\rho_\mu}(\cdot,Y) \rightarrow\vec{\Gamma}(\cdot,Y)
\quad\text{uniformly on } K \text{ for any compact }
K\Subset \bR^{n+1}\setminus\set{Y}.
\end{equation}

We will now show that $\vec{\Gamma}(X,\cdot)$ is also H\"older continuous
in $\bR^{n+1}\setminus\set{X}$.
Denote by ${}^t\!\vec{\Gamma}^{\sigma}(\cdot,X)$ the averaged
fundamental matrix associated to $\cLt$.
More precisely, for $1\le k \le N$ and $X=(t,x)\in \bR^{n+1}$ fixed,
let $\vec{w}_\sigma=\vec{w}_{\sigma;X,k}$
be the unique weak solution in $\rV^{1,0}_2((-\infty,t_0)\times\bR^n)^N$
of the backward problem
\begin{equation*}  
\left\{\begin{array}{l l}
\cLt \vec{u}=\frac{1}{\abs{Q^{+}_\sigma}}1_{Q^+_\sigma(X)} \vec{e}_k\\
\vec{u}(t_0,\cdot)= 0,\end{array}\right.
\end{equation*}
where $t_0>t+\sigma^2$ is fixed but arbitrary.
Then, as before, we may extend $\vec{w}_\sigma$ to the entire
$\bR^{n+1}$ by setting
$\vec{w}_\sigma\equiv 0$ on $(t+\sigma^2,\infty)\times \bR^n$
so that $\vec{w}_\sigma$ belongs to $\rV^{1,0}_2(\bR^{n+1})$ and satisfies for
all $s_1<t$ the identity
\begin{equation} \label{eq2.241}
\int_{\bR^n}w_\sigma^i \phi^i(s_1,\cdot)
+\int_{s_1}^\infty\!\!\!\int_{\bR^n}w_\sigma^i \phi^i_t
+\int_{s_1}^\infty\!\!\!\int_{\bR^n}
{}^t\!A^{\alpha\beta}_{ij}D_\beta w_\sigma^j D_\alpha \phi^i
= \dashint_{Q^+_\sigma(X)}\phi^k
\end{equation}
for all $\vec{\phi}\in C^\infty_{c,p}(\bR^{n+1})^N$.
We define the averaged fundamental matrix
${}^t\!\vec{\Gamma}^\sigma(\cdot,X)
=({}^t\!\Gamma^\sigma_{jk}(\cdot,X))_{j,k=1}^N$
for $\cLt$ by
\begin{equation*}
{}^t\!\Gamma^\sigma_{jk}(\cdot,X)=w_\sigma^j=w^j_{\sigma;Y,k}.
\end{equation*}
By a similar argument as appears in the previous subsection,
we obtain a sequence $\set{\sigma_\nu}_{\nu=1}^\infty$ tending to $0$
and a function ${}^t\!\vec{\Gamma}(\cdot,X)$,
which we shall call a fundamental matrix for $\cLt$, such that
for some $q>1$,
\begin{equation} \label{eq11.46p}
{}^t\!\vec{\Gamma}^{\sigma_\nu}(\cdot,X)\wto
{}^t\!\vec{\Gamma}(\cdot,X)\quad\text{weakly in } W^{1,q}_x(Q_R(X))^{N\times N}
\quad \forall R>0.
\end{equation}
Moreover, as in \eqref{eqn:E-44}, ${}^t\!\vec{\Gamma}(\cdot,X)$ satisfies
\begin{equation*}
\int_{\bR^{n+1}}{}^t\!\Gamma_{ik}(\cdot,X)\phi^i_t+
\int_{\bR^{n+1}}
{}^t\!A^{\alpha\beta}_{ij}D_\beta{}^t\!\Gamma_{jk}(\cdot,X)D_\alpha \phi^i
=\phi^k(X)
\end{equation*}
for all $\vec{\phi}\in C^\infty_c(\bR^{n+1})^N$.
Also, by a similar reasoning, we find that
${}^t\!\vec{\Gamma}(\cdot,X)$ satisfies all the properties
corresponding to \eqref{eqn:E-46}--\eqref{eqn:E-52} and is H\"older continuous
in $\bR^{n+1}\setminus\set{X}$.
Therefore, the following lemma will prove the claim that
$\vec{\Gamma}(X,\cdot)$ is H\"older continuous in $\bR^{n+1}\setminus\set{X}$
as well as the estimates \eqref{eq5.15}--\eqref{eq5.20}.

\begin{lemma}  \label{lem:3.5}
Let $\vec{\Gamma}(\cdot,Y)$ and ${}^t\!\vec{\Gamma}(\cdot,X)$ be fundamental
matrices of $\cL$ and $\cLt$, respectively, as constructed above. Then
\begin{equation}
\label{eqn:E-66}
\vec{\Gamma}(X,Y)={}^t\!\vec{\Gamma}(Y,X)^T \quad \forall X\neq Y.
\end{equation}
\end{lemma}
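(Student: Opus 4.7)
The plan is to start from the pairing identity \eqref{eq2.17} and pass to the limit first in $\rho$, then in $\sigma$, using the uniform convergence \eqref{eq3.420} on compact sets away from the pole, together with the H\"older continuity established in Section~\ref{sec:3.4}. Fix indices $k,l\in\{1,\dots,N\}$, fix $X\neq Y$, and let $\vec v_\rho$ be the $k$-th column of $\vec\Gamma^\rho(\cdot,Y)$ and $\vec w_\sigma$ the $l$-th column of ${}^t\!\vec\Gamma^\sigma(\cdot,X)$, as in Sections~\ref{sec0301} and \ref{sec:3.4}. Recall that $\vec w_\sigma$ is precisely the solution $\vec u$ in \eqref{eq2.05} produced by choosing $\vec f=|Q_\sigma^+|^{-1}1_{Q_\sigma^+(X)}\vec e_l$ (with $t_0>t+\sigma^2$). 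Substituting this $\vec f$ into identity \eqref{eq2.17} gives the starting pairing
\begin{equation*}
\dashint_{Q_\sigma^+(X)}\Gamma^\rho_{lk}(\cdot,Y)
=\dashint_{Q_\sigma^+(X)} v_\rho^l
=\dashint_{Q_\rho^-(Y)} w_\sigma^k
=\dashint_{Q_\rho^-(Y)}{}^t\!\Gamma^\sigma_{kl}(\cdot,X),
\end{equation*}
valid for every $\rho,\sigma>0$.

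Now I would let $\rho=\rho_\mu\to 0$ with $\sigma$ fixed and small enough that $Y\notin \overline{Q_\sigma^+(X)}$. On the left-hand side, the compact set $\overline{Q_\sigma^+(X)}$ lies in $\bR^{n+1}\setminus\{Y\}$, so \eqref{eq3.420} gives uniform convergence $\Gamma^{\rho_\mu}_{lk}(\cdot,Y)\to\Gamma_{lk}(\cdot,Y)$ there, hence
\begin{equation*}
\lim_{\mu\to\infty}\dashint_{Q_\sigma^+(X)}\Gamma^{\rho_\mu}_{lk}(\cdot,Y)
=\dashint_{Q_\sigma^+(X)}\Gamma_{lk}(\cdot,Y).
\end{equation*}
On the right-hand side, the integrand is independent of $\rho$; moreover, since $Y\notin\overline{Q_\sigma^+(X)}$, the function ${}^t\!\Gamma^\sigma_{kl}(\cdot,X)$ is a weak solution of $\cLt\vec u=0$ in a neighborhood of $Y$, so by Lemma~\ref{lem3} and the H\"older estimate of Lemma~\ref{lem4} (applied to $\cLt$) it is continuous at $Y$. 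Therefore the Lebesgue averages tend to the pointwise value and we obtain
\begin{equation*}
{}^t\!\Gamma^\sigma_{kl}(Y,X)=\dashint_{Q_\sigma^+(X)}\Gamma_{lk}(\cdot,Y).
\end{equation*}

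Next I would take $\sigma=\sigma_\nu\to 0$. The left-hand side converges to ${}^t\!\Gamma_{kl}(Y,X)$ by the analogue of \eqref{eq3.420} for ${}^t\!\vec\Gamma$ noted after \eqref{eq11.46p} (uniform convergence on compact subsets of $\bR^{n+1}\setminus\{X\}$, and $Y$ is such a point). The right-hand side converges to $\Gamma_{lk}(X,Y)$ because $\Gamma_{lk}(\cdot,Y)$ is continuous at $X$, again by the results of Section~\ref{sec:3.4}. Combining,
\begin{equation*}
{}^t\!\Gamma_{kl}(Y,X)=\Gamma_{lk}(X,Y) \quad\forall\,k,l,
\end{equation*}
which is exactly the matrix identity \eqref{eqn:E-66}.

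The only delicate point is the starting identity itself: strictly speaking \eqref{eq2.24} and \eqref{eq2.241} are formulated for smooth compactly supported test functions, while we wish to test against $\vec w_\sigma$ and $\vec v_\rho$ which lie only in $\rV^{1,0}_2$. However, this is precisely the difficulty already resolved in Lemma~\ref{lem3.1.1}, so no additional work is needed; the remaining content is just identifying the right test function and organizing a clean double passage to the limit.
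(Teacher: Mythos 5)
Your proof is correct and follows essentially the same strategy as the paper: establish the symmetric pairing identity
$\dashint_{Q_\sigma^+(X)}\Gamma^\rho_{lk}(\cdot,Y)=\dashint_{Q_\rho^-(Y)}{}^t\!\Gamma^\sigma_{kl}(\cdot,X)$
(this is precisely the paper's \eqref{eqn:E-59}, obtained by testing \eqref{eq2.17} with $\vec f=|Q_\sigma^+|^{-1}1_{Q_\sigma^+(X)}\vec e_l$) and then pass to a double limit along the approximating sequences $\rho_\mu\to 0$, $\sigma_\nu\to 0$. The only difference is the order in which you take the two iterated limits and, consequently, which convergences you invoke: you take $\rho_\mu\to 0$ first (using the locally uniform convergence \eqref{eq3.420} on the compact set $\overline{Q_\sigma^+(X)}$ together with continuity of ${}^t\!\Gamma^\sigma_{kl}(\cdot,X)$ at $Y$), then $\sigma_\nu\to 0$ (using the ${}^t$-analogue of \eqref{eq3.420} and continuity of $\Gamma_{lk}(\cdot,Y)$ at $X$); the paper takes $\sigma_\nu\to 0$ first (using continuity of $\Gamma^{\rho_\mu}_{lk}(\cdot,Y)$ at $X$ and the weak $W^{1,q}_x$ convergence \eqref{eq11.46p}), then $\rho_\mu\to 0$. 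Your version thus relies only on the locally uniform convergence via Arzel\`a--Ascoli and sidesteps the $W^{1,q}_x$ weak convergence, which is a slight simplification; both versions are valid since the needed continuity of the (averaged and limit) fundamental matrices away from the pole is established in Section~\ref{sec:3.4}.
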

\begin{proof}
Fix $X, Y\in\bR^{n+1}$ with $X\neq Y$.
From the constructions of
$\vec{\Gamma}^\rho(\cdot,Y)$ and ${}^t\!\vec{\Gamma}^\sigma(\cdot,X)$,
it is not hard to verify that
\begin{equation}
\label{eqn:E-59}
\dashint_{Q_{\rho}^{-}(Y)}{}^t\!\Gamma_{kl}^{\sigma}(\cdot,X)
=\dashint_{Q_{\sigma}^{+}(X)}\Gamma_{lk}^{\rho}(\cdot,Y).
\end{equation}
Let $\set{\rho_\mu}_{\mu=1}^\infty$ and $\set{\sigma_\nu}_{\nu=1}^\infty$ be
approximating sequences for $\vec{\Gamma}(\cdot,Y)$ and
${}^t\!\vec{\Gamma}(\cdot,X)$, respectively, and denote
\begin{equation*}     
g^{kl}_{\mu\nu}:=
\dashint_{Q_{\rho_\mu}^{-}(Y)}{}^t\!\Gamma_{kl}^{\sigma_\nu}(\cdot,X)
=\dashint_{Q_{\sigma_\nu}^{+}(X)}\Gamma_{lk}^{\rho_\mu}(\cdot,Y).
\end{equation*}
From the continuity of $\Gamma_{lk}^{\rho_\mu}(\cdot,Y)$, it follows that
\begin{equation*}
\lim_{\nu\to\infty} g^{kl}_{\mu\nu}=
\lim_{\nu\to\infty}
\dashint_{Q_{\sigma_\nu}^{+}(X)}\Gamma_{lk}^{\rho_\mu}(\cdot,Y)=
\Gamma_{lk}^{\rho_\mu}(X,Y),
\end{equation*}
and thus, by \eqref{eq3.420}, we obtain
\begin{equation*}
\lim_{\mu\to\infty} \lim_{\nu\to\infty} g^{kl}_{\mu\nu}=
\lim_{\mu\to\infty} \Gamma_{lk}^{\rho_\mu}(X,Y)=\Gamma_{lk}(X,Y).
\end{equation*}
On the other hand, \eqref{eq11.46p} yields that
\begin{equation*}
\lim_{\nu\to\infty} g^{kl}_{\mu\nu}=
\lim_{\nu\to\infty}
\dashint_{Q_{\rho_\mu}^{-}(Y)}{}^t\!\Gamma_{kl}^{\sigma_\nu}(\cdot,X)=
\dashint_{Q_{\rho_\mu}^{-}(Y)}{}^t\!\Gamma_{kl}(\cdot,X),
\end{equation*}
and thus, it follows from the continuity of ${}^t\!\Gamma_{kl}(\cdot,X)$ that
\begin{equation*}
\lim_{\mu\to\infty} \lim_{\nu\to\infty} g^{kl}_{\mu\nu}=
\lim_{\mu\to\infty} \dashint_{Q_{\rho_\mu}^{-}(Y)}{}^t\!\Gamma_{kl}(\cdot,X)
={}^t\!\Gamma_{kl}(Y,X).
\end{equation*}
We have thus shown that
$\Gamma_{lk}(X,Y)={}^t\!\Gamma_{kl}(Y,X)$ for all $X\neq Y$.
\end{proof}

So far, we have seen that there is a sequence
$\set{\rho_\mu}_{\mu=1}^\infty$ tending to $0$
such that $\vec{\Gamma}^{\rho_\mu}(\cdot,Y) \to \vec{\Gamma}(\cdot,Y)$
in $\bR^{n+1}\setminus\set{Y}$.
However, by \eqref{eqn:E-59}, we obtain for $X\neq Y$,
\begin{align*}
\Gamma_{lk}^{\rho}(X,Y)&=
\lim_{\nu\to\infty}\dashint_{Q_{\sigma_\nu}^{+}(X)}\Gamma_{lk}^{\rho}(\cdot,Y)
=\lim_{\nu\to\infty}
\dashint_{Q_{\rho}^{-}(Y)}{}^t\!\Gamma_{kl}^{\sigma_\nu}(\cdot,X)\\
&=\dashint_{Q_{\rho}^-(Y)}{}^t\!\Gamma_{kl}(\cdot,X)
=\dashint_{Q_{\rho}^{-}(Y)}\Gamma_{lk}(X,\cdot).
\end{align*}
Therefore, we have the following representation of
the averaged fundamental matrix:
\begin{equation}
\label{eqn:E-68}
\vec{\Gamma}^\rho(X,Y)=\dashint_{Q_\rho^-(Y)} \vec{\Gamma}(X,Z)\,dZ
\quad \forall X\neq Y.
\end{equation}
In particular, by using the continuity of $\vec{\Gamma}(X,\cdot)$, we obtain
\begin{equation}
\label{eq3.786}
\lim_{\rho\to 0} \vec{\Gamma}^\rho(X,Y)=\vec{\Gamma}(X,Y)
\quad \forall X\neq Y,
\end{equation}
and thus, from \eqref{eq11.16} that
\begin{equation} \label{eq3.6.0}
\abs{\vec{\Gamma}(X,Y)}\leq C |X-Y|_p^{-n} \quad\forall X\neq Y.
\end{equation}
Furthermore, due to \eqref{eq11.54}, it also follows that
\begin{equation}  \label{eq11.55}
\vec{\Gamma}(\cdot,\cdot,s,y)\equiv 0\quad\text{in}\quad(-\infty,s)\times \bR^n.
\end{equation}

\subsection{Representation formulas}   \label{sec3.5}
We shall prove the identity \eqref{eqn:E-70}.
For a given $\vec{f}\in C^\infty_c(\bR^{n+1})^N$,
fix $t_0$ such that $\vec{f}\equiv 0$ on $(-\infty,t_0]\times\bR^n$.
Let $\vec{u}$ be the unique weak solution in
$\rV^{1,0}_2((t_0,\infty)\times\bR^n)^N$ of the problem
\begin{equation*}
\left\{\begin{array}{l l}
\cL \vec{u}=\vec{f}\\
\vec{u}(t_0,\cdot)= 0.\end{array}\right.
\end{equation*}
By the uniqueness, $\vec{u}$ does not depend on the choice of $t_0$
and we may extend $\vec{u}$ to $\bR^{n+1}$ by setting
$\vec{u}\equiv 0$ on $(-\infty,t_0)\times \bR^n$ so that
$\vec{u}\in \rV^{1,0}_2(\bR^{n+1})^N$.
Let ${}^t\!\vec{\Gamma}^\sigma(\cdot,X)$ be the averaged fundamental matrix
for $\cLt$.
Then, as in \eqref{eq2.17}, we have
\begin{equation}  \label{eq:z-001}
\int_{\bR^{n+1}}{}^t\!\Gamma^{\sigma_\nu}_{ik}(\cdot,X)f^i=
\dashint_{Q_{\sigma_\nu}^+(X)}u^k.
\end{equation}
As in \eqref{eq10.191}, the property (PH) implies that
$\vec{u}$ is continuous.
Therefore, by taking the limit $\nu\to\infty$ in \eqref{eq:z-001} and
then using \eqref{eqn:E-66},
we find that
\begin{equation*}
\int_{\bR^{n+1}}\Gamma_{ki}(X,\cdot)f^i= u^k(X)
\end{equation*}
or equivalently, in terms of matrix multiplication
\begin{equation*}     
\vec{u}(X)= \int_{\bR^{n+1}}\vec{\Gamma}(X,Y)\vec{f}(Y)\,dY.
\end{equation*}
We have thus proved \eqref{eqn:E-70}.

Next, we shall prove the identity \eqref{eq5.24}.
Let $\vec{g}\in L^2(\bR^n)^N$ be given and
let $\vec{u}$ be the weak solution in $\rV^{1,0}_2((s,\infty)\times\bR^n)^N$
of the problem \eqref{eq5.88}.
Fix $X=(t,x)\in\bR^{n+1}$ with $t>s$ and let
$\vec{w}_\sigma=\vec{w}_{\sigma;X,k}$ be
the $k$-th column of the averaged fundamental matrix
of the transpose operator $\cLt$ with a pole at $X$.
By using \eqref{eq2.241} and \eqref{eq:c01} together with standard
approximation techniques (see e.g., \cite[\S III.2]{LSU}),
we find that for $\sigma_\nu$ sufficiently small,
\begin{equation}  \label{eqn:3.56}
\dashint_{Q^+_{\sigma_\nu}(X)} u^k= \int_{\bR^n} \vec{w}_{\sigma_\nu}(s,y)\cdot
\vec{g}(y)\,dy.
\end{equation}
Let us assume for the moment that $\vec g$ is compactly supported.
Then, by estimates similar to \eqref{eq3.420} and \eqref{eq3.6.0},
\begin{equation*} 
\lim_{\nu\to \infty}\int_{\bR^n}\vec{w}_{\sigma_\nu}(s,y)\cdot \vec{g}(y)\,dy
=\int_{\bR^n} \vec{w}(s,y)\cdot \vec{g}(y)\,dy,
\end{equation*}
where $\vec{w}$ is the $k$-th column of ${}^t\!\vec{\Gamma}(\cdot,X)$.
Also, by the property (PH), we find that $\vec{u}$ is continuous at $X$.
Therefore, by taking the limit $\nu\to\infty$ in \eqref{eqn:3.56}, we obtain
\begin{equation}  \label{eqn:3.57}
u^k(X)= \int_{\bR^n} {}^t\!\Gamma_{ik}(s,y,t,x)g^i(y)\,dy
=\int_{\bR^n} \Gamma_{ki}(t,x,s,y)g^i(y)\,dy.
\end{equation}
We have thus derived \eqref{eq5.24} under the additional assumption
that $\vec g$ is compactly supported.
For general $\vec g\in L^2(\bR^n)^N$, let $\{\vec g_m\}_{m=1}^\infty$ be
a sequence in $L^\infty_c(\bR^n)^N$ such that
$\lim_{m\to\infty}\norm{\vec g_m-\vec g}_{L^2(\bR^n)}=0$.
Let $\vec{u}_m$ be the unique weak solution in
$\rV^{1,0}_2((s,\infty)\times\bR^n)^N$
of the problem \eqref{eq5.88} with $\vec g$ replaced by $\vec g_m$.
Then, by the energy inequality, we find that
$\lim_{m\to\infty}\tri{\vec u_m-\vec u}_{(s,t)\times\bR^n}=0$.
Therefore, by Lemma~\ref{lem3}, we conclude
$\lim_{m\to\infty}\abs{\vec u_m(X)-\vec u(X)}=0$.
On the other hand, by \eqref{eq5.21} and \eqref{eqn:E-49}, we have 
$\norm{\vec\Gamma(t,x,s,\cdot)}_{L^2(\bR^n)}\leq C(t-s)^{-n/2}$, and thus
\begin{equation*}
\lim_{m\to\infty}
\int_{\bR^n}\vec\Gamma(t,x,s,y)\vec{g}_m(y)\,dy=
\int_{\bR^n}\vec\Gamma(t,x,s,y)\vec{g}(y)\,dy.
\end{equation*}
This completes the proof of the representation formula \eqref{eq5.24}.

Next, for $\vec{g}\in L^2(\bR^n)^N$,
let $\vec{u}$ be the unique weak solution in
$\rV^{1,0}_2((-\infty,t)\times\bR^n)^N$ of the backward problem
\begin{equation} \label{eq3.15}
\left\{\begin{array}{l l}
\cLt \vec{u}=0\\
\vec{u}(t,\cdot)= \vec{g}.\end{array}\right.
\end{equation}
Then, by a similar reasoning as above,
$\vec{u}$ has the following representation
\begin{equation} \label{eq3.16}
\vec{u}(s,y)=
\int_{\bR^n} {}^t\!\vec{\Gamma}(s,y,t,x)\vec{g}(x)\,dx.
\end{equation}

\subsection{Uniqueness of fundamental matrix}   \label{sec:3.6}
We have already shown that $\vec{\Gamma}(X,Y)$
is continuous in $\set{(X,Y)\in\bR^{n+1}\times\bR^{n+1}:X\neq Y}$ and
satisfies $\vec{\Gamma}(t,x,s,y)\equiv 0$ for $t<s$.
Also, we have seen that
$\vec\Gamma(X,\cdot)$ is locally integrable in $\bR^{n+1}$
for all $X\in\bR^{n+1}$ and that for all
$\vec{f} \in C^\infty_c(\bR^{n+1})^N$, the function $\vec{u}$ given by
\eqref{eqn:E-70} belongs to the space $\rV^{1,0}_2(\bR^{n+1})^N$ and
satisfies $\cL\vec{u}=\vec{f}$ in the sense of \eqref{eqn:E-71}.
Suppose $\tilde{\vec\Gamma}(X,Y)$ is another fundamental matrix satisfying
all the properties stated above.
Then, for a given $\vec{f}\in C^\infty_c(\bR^{n+1})^N$,
let $\vec{u}$ be defined as in \eqref{eqn:E-70} and set
\begin{equation*}
\tilde{\vec{u}}(X):=\int_{\bR^{n+1}} \tilde{\vec\Gamma}(X,Y)\vec{f}(Y)\,dY.
\end{equation*}
Since $\vec\Gamma(t,x,s,y)=\tilde{\vec\Gamma}(t,x,s,y)\equiv 0$
for $t<s$ and
$\vec{f}$ is compactly supported in $\bR^{n+1}$,
we find $\vec{u}=\tilde{\vec{u}}\equiv 0$ on
$(-\infty,t_0]\times\bR^n$ for some $t_0$.
Therefore, $\vec{v}:=\vec{u}-\tilde{\vec{u}}$ is in the space
$\rV^{1,0}_2((t_0,\infty)\times\bR^n)^N$ and satisfies $\cL \vec{v}=0$ in
$(t_0,\infty)\times\bR^n$ and $\vec{v}(t_0,\cdot)=0$.
By the uniqueness (see \cite[\S III.3]{LSU}), we must have $\vec{v}\equiv 0$.
Therefore, we find
\begin{equation*}
\int_{\bR^{n+1}}(\vec{\Gamma}-\tilde{\vec\Gamma})(X,Y)\vec{f}(Y)\,dY=0
\quad \forall \vec{f}\in C^\infty_c(\bR^{n+1})^N,
\end{equation*}
and thus it follows that
$\vec{\Gamma}(X,\cdot)\equiv\tilde{\vec\Gamma}(X,\cdot)$
in $\bR^{n+1}\setminus\set{X}$.
Since $X\in\bR^{n+1}$ is arbitrary, we have proved that
$\vec{\Gamma}\equiv\tilde{\vec\Gamma}$ in
$\set{(X,Y)\in\bR^{n+1}\times\bR^{n+1}:X\neq Y}$.
\subsection{Conclusion}
We completed the proof of Theorem~\ref{thm1} in the case when
$\Omega=\bR^n$ and $R_c=\infty$, modulo the identity \eqref{eq5.23}.
We defer the proof of \eqref{eq5.23} to Section~\ref{sec:4.4}.

\mysection{Proof of Theorem~\ref{thm1}: General cases}   \label{sec4}
In this section, we prove Theorem \ref{thm1} under a general assumption
that $\Omega$ is an arbitrary open connected set in $\bR^n$
and $R_c\in (0,\infty]$.
Since we allow the case $\Omega\neq\bR^n$, we shall write $\vec{G}(X,Y)$
instead of $\vec \Gamma(X,Y)$ for the Green's matrix in $\U=\bR\times\Omega$
to avoid confusion.
To construct Green's matrix in $\U$,
we need to adjust arguments from the previous section a little bit.

Throughout this section, we shall use the notations
$\U_\rho^-(X)=Q_\rho^-(X)\cap \U$, $\U_\rho^+(X)=Q_\rho^+(X)\cap \U$, and
$\U_\rho(X)=Q_\rho(X)\cap \U$.
Also, recall that we have defined $d_X=\dist(X,\partial\U)=\dist(x,\partial\Omega)$
and $\bar{d}_X=\min(d_X,R_c)$.
As in the previous section, we employ the letter $C$ to denote a constant
depending on $n$, $N$, $\lambda$, $\Lambda$, $\mu_0$, $C_0$, and sometimes 
on an exponent $p$ characterizing Lebesgue classes.

\subsection{Averaged Green's matrix}
Let $Y=(s,y)\in \U$ and $1\le k \le N$ be fixed.
For each $\rho>0$, fix $s_0\in (-\infty,s-\rho^2)$.
We consider the problem
\begin{equation} \label{eq1.39p}
\left\{\begin{array}{l l}
\cL \vec{u}=\frac{1}{\abs{\U^-_\rho(Y)}}1_{\U^-_\rho(Y)} \vec{e}_k\\
\vec{u}(s_0,\cdot)= 0.\end{array}\right.
\end{equation}
By Lemma~\ref{lem13.1}, there is a unique weak solution
$\vec{v}_\rho=\vec{v}_{\rho;Y,k}$ in $\rV^{1,0}_2((s_0,\infty)\times\Omega)^N$
of the problem \eqref{eq1.39p}.
As in Section~\ref{sec3.2}, we may extend $\vec{v}_\rho$ to entire $\U$
by setting
\begin{equation} \label{eq11.54p}
\vec{v}_\rho\equiv 0 \text{ on } (-\infty,s-\rho^2)\times \Omega
\end{equation}
so that $\vec{v}_\rho\in \rV^{1,0}_2(\U)^N$ and satisfies for
all $t_1>s$ the identity
\begin{equation} \label{eq2.24p}
\int_{\Omega}v_\rho^i \phi^i(t_1,\cdot)
-\int_{-\infty}^{t_1}\int_{\Omega}v_\rho^i \phi^i_t
+\int_{-\infty}^{t_1}\int_{\Omega}
A^{\alpha\beta}_{ij}D_\beta v_\rho^j D_\alpha \phi^i
= \dashint_{\U^-_\rho(Y)}\phi^k
\end{equation}
for all $\vec{\phi}\in C^\infty_{c,p}(\U)^N$.
We then define the \textit{averaged Green's matrix}
$\vec{G}^\rho(\cdot,Y)=(G^\rho_{jk}(\cdot,Y))_{j,k=1}^N$ for $\cL$ in $\U$
as
\begin{equation*}  
G^\rho_{jk}(\cdot,Y)=v_\rho^j=v^j_{\rho;Y,k}.
\end{equation*}

Next, for each $\vec{f}\in L^\infty_c(\U)^N$,
fix $t_0$ such that $\vec{f}\equiv 0$ on $[t_0,\infty)\times \Omega$
and let $\vec{u}$ be the unique weak solution in
$\rV^{1,0}_2((-\infty,t_0)\times\Omega)^N$ of the problem \eqref{eq2.05}.
Again, we may extend $\vec{u}$ to $\U$ by setting
$\vec{u}\equiv 0$ on $(t_0,\infty)\times\Omega$ so that
$\vec{u}\in \rV^{1,0}_2(\U)^N$ and satisfies for all $t_1$ the identity
\begin{equation} \label{eq2.35p}
\int_{\Omega}u^i \phi^i(t_1,\cdot)
+\int_{t_1}^\infty\!\!\int_{\Omega}u^i \phi^i_t
+\int_{t_1}^\infty\!\!\int_{\Omega}
{}^t\!A^{\alpha\beta}_{ij}D_\beta u^j D_\alpha \phi^i
=\int_{t_1}^\infty\!\!\int_{\Omega} f^i \phi^i
\end{equation}
for all $\vec{\phi}\in C^\infty_{c,p}(\U)$.

We note that Lemma \ref{lem3.1.1} remains valid with
$\Omega$, $\U$, and $\U_\rho^-$ in place of $\bR^n$, $\bR^{n+1}$,
and $Q_\rho^-$, respectively.
By following a similar argument as in Section~\ref{sec3.2}, we find that
if $\vec{f}$ is supported in $Q^+_R(X_0)$,
where $X_0\in \U$ and $R< \bar{d}_{X_0}$, then we have
\begin{equation*}
\|\vec{u}\|_{L^\infty(Q_{R/4}^+(X_0))}
\leq C R^{2-(n+2)/p}\|\vec{f}\|_{L^p(Q_R^+(X_0))}
\quad \forall p>(n+2)/2.
\end{equation*}
Consequently, as in \eqref{eq11.09}, if we have $\rho<R/4$
and $Q^-_\rho(Y)\subset Q^+_{R/4}(X_0)$, then
\begin{equation*}
\|\vec{v}_\rho\|_{L^q(Q^+_R(X_0))}\leq CR^{-n+(n+2)/q}
\quad \forall q\in [1, (n+2)/n).
\end{equation*}
Therefore, by following the proof of Lemma \ref{lem3.2.5},
we conclude that
\begin{equation} \label{eq11.16p}
\abs{\vec{G}^\rho(X,Y)}\leq C |X-Y|_p^{-n}
\quad \forall\rho\leq |X-Y|_p/3
\quad\text{if }|X-Y|_p<\bar{d}_Y/2.
\end{equation}

\subsection{Construction of the Green's matrix} \label{sec4.2}
As in \eqref{eq11.299}, the estimate \eqref{eq11.16p} yields
\begin{equation} \label{eq11.299p}
\tri{\vec{G}^\rho(\cdot,Y)}_{\U\setminus \overline Q_R(Y)}^2 \leq C R^{-n}
\quad \forall \rho>0 \quad \forall R<\bar{d}_Y/2,
\end{equation}
which in turn gives the following weak-$L^{(n+2)/(n+1)}$ estimate
as in \eqref{eq11.17}:
\begin{equation} \label{eq11.17p}
\abs{\set{X\in \U:\abs{D_x\vec{G}^\rho(X,Y)}>\tau}} \leq C
\tau^{-\frac{n+2}{n+1}} \quad \forall \tau> (\bar{d}_Y/2)^{-n}.
\end{equation}
Then, as in \eqref{eq11.29}, we have for any $\rho>0$,
\begin{equation} \label{eq11.29p}
\norm{D\vec{G}^\rho(\cdot,Y)}_{L^p(Q_R(Y))} \leq C_p R^{-n-1+(n+2)/p}
\quad \forall R < \bar{d}_Y \quad \forall p\in [1,\tfrac{n+2}{n+1}).
\end{equation}
Also, as in \eqref{eq11.277} we have
\begin{equation}
\label{eq11.277p}
\int_{\U\setminus \overline Q_R(Y)}\abs{\vec{G}^\rho(\cdot,Y)}^{2+4/n}\leq
C R^{-n-2}
\quad \forall \rho>0 \quad \forall R<\bar{d}_Y/2,
\end{equation}
which gives the following weak-$L^{(n+2)/n}$ estimate as in \eqref{eq11.07}:
\begin{equation} \label{eq11.07p}
\abs{\set{X\in \U:\abs{\vec{G}^\rho(X,Y)}>\tau}} \leq C \tau^{-(n+2)/n}
\quad \forall \tau> (\bar{d}_Y/2)^{-n}.
\end{equation}
Then, as in \eqref{eq11.27}, we have for any $\rho>0$
\begin{equation} \label{eq11.27p}
\norm{\vec{G}^\rho(\cdot,Y)}_{L^p(Q_R(Y))}\leq C_p R^{-n+(n+2)/p}
\quad\forall R< \bar{d}_Y \quad \forall p\in [1,(n+2)/n).
\end{equation}

Fix $q\in (1,\frac{n+2}{n+1})$ and observe that \eqref{eq11.29p}
and \eqref{eq11.27p} imply
\begin{equation}
\label{eqn:G-23}
\norm{\vec{G}^\rho(\cdot\,Y)}_{W^{1,q}_x(Q_{\bar{d}_Y}(Y))}\leq
C(\bar{d}_Y)<\infty \quad\text{uniformly in }\rho>0.
\end{equation}
Also, fix a cut-off function
\begin{equation}  \label{eq8.01}
\zeta_Y\in C^\infty_c(Q_{\bar{d}_Y/2}(Y))\quad
\text{satisfying}\quad\zeta_Y\equiv 1\text{ on }Q_{\bar{d}_Y/4}(Y).
\end{equation}
Then, as in \eqref{eq7.00}, we find
\begin{equation}  \label{eq8.00}
\tri{(1-\zeta_Y)\vec G^\rho(\cdot,Y)}_{\U}\le C(\zeta_Y)<\infty
\quad\text{uniformly in } \rho>0.
\end{equation}
Therefore, we conclude from \eqref{eqn:G-23} and \eqref{eq8.00}
that there exist a sequence $\set{\rho_\mu}_{\mu=1}^\infty$ tending to $0$ and
functions $\vec{G}(\cdot,Y)\in W^{1,q}_x(Q_{\bar{d}_Y}(Y))^{N\times N}$
and $\tilde{\vec{G}}(\cdot,Y)\in \rV_2(\U)^{N\times N}$
such that as $\mu\to \infty$,
\begin{align}
\label{eqn:G-24}
\vec{G}^{\rho_\mu}(\cdot,Y)\wto \vec{G}(\cdot,Y)
\quad&\text{weakly in } W^{1,q}_x(Q_{\bar{d}_Y}(Y))^{N\times N}\\
\label{eqn:G-25} (1-\zeta_Y)\vec{G}^{\rho_\mu}(\cdot,Y)\wto
\tilde{\vec{G}}(\cdot,Y) \quad&\text{``very weakly'' in } V_2(\U)^{N\times N}.
\end{align}
Since we must have $\vec{G}(\cdot,Y)\equiv\tilde{\vec{G}}(\cdot,Y)$
on $Q_{\bar{d}_Y}(Y)\setminus Q_{\bar{d}_Y/2}(Y)$,
we may extend $\vec{G}(\cdot,Y)$ to the entire $\U$
by setting $\vec{G}(\cdot,Y)=\tilde{\vec{G}}(\cdot,Y)$
on $\U\setminus Q_{\bar{d}_Y}(Y)$.
We shall call this extended function a Green's matrix for $\cL$ in
$\U$ with a pole at $Y$ and still denote it by $\vec{G}(\cdot,Y)$.

Now, we prove the identity \eqref{eq5.22} in Theorem~\ref{thm1}.
Write $\vec\phi=\eta\vec\phi+(1-\eta)\vec\phi$, where
$\eta\in C^\infty_c(Q_{\bar{d}_Y}(Y))$ satisfying
$\eta\equiv 1$ on $Q_{\bar{d}_Y/2}(Y)$.
Then, by \eqref{eq2.24p}, \eqref{eqn:G-24}, and \eqref{eqn:G-25}, we obtain
\begin{align*}
\phi^k(Y)&=\lim_{\mu\to\infty} \dashint_{\U_{\rho_\mu}(Y)}\eta\phi^k +
\lim_{\mu\to\infty} \dashint_{\U_{\rho_\mu}(Y)}(1-\eta)\phi^k\\
&=\lim_{\mu\to\infty}\int_{\U}\left(
-G_{ik}^{\rho_\mu}(\cdot,Y) D_t(\eta\phi^i)+A^{\alpha\beta}_{ij}
D_\beta G^{\rho_\mu}_{jk}(\cdot,Y) D_\alpha (\eta \phi^i)\right)\\
&\qquad+\lim_{\mu\to\infty}\int_{\U}\left(
-G_{ik}^{\rho_\mu}(\cdot,Y)D_t((1-\eta)\phi^i)+A^{\alpha\beta}_{ij}
D_\beta G^{\rho_\mu}_{jk}(\cdot,Y) D_\alpha ((1-\eta) \phi^i)\right)\\
&=\int_{\U}\left(-G_{ik}(\cdot,Y) D_t(\eta\phi^i)+A^{\alpha\beta}_{ij}
D_\beta G_{jk}(\cdot,Y) D_\alpha (\eta \phi^i)\right)\\
&\qquad+\int_{\U}\left(-G_{ik}(\cdot,Y)D_t((1-\eta)\phi^i)
+A^{\alpha\beta}_{ij}D_\beta G_{jk}(\cdot,Y) D_\alpha((1-\eta)\phi^i)\right)\\
&=\int_{\U}\left(-G_{ik}(\cdot,Y) D_t\phi^i+A^{\alpha\beta}_{ij}
D_\beta G_{jk}(\cdot,Y) D_\alpha \phi^i\right)\quad \text{as desired.}
\end{align*}
By proceeding similarly as in Section~\ref{sec0303},
we derive the following estimates
which correspond to \eqref{eqn:E-46}--\eqref{eqn:E-52}:
\begin{gather}
\label{eqn:G-32}
\norm{\vec{G}(\cdot,Y)}_{L^p(Q_R(Y))}\le C_p\,R^{-n+(n+2)/p}
\quad \forall R<\bar{d}_Y \quad \forall p\in [1,\tfrac{n+2}{n}),\\
\label{eqn:G-32b}
\norm{D\vec{G}(\cdot,Y)}_{L^p(Q_R(Y))}\le C_p\,R^{-n-1+(n+2)/p}
\quad \forall R<\bar{d}_Y \quad \forall p\in [1,\tfrac{n+2}{n+1}),\\
\label{eqn:G-33}
\norm{\vec{G}(\cdot,Y)}_{L^{2+4/n}(\U\setminus \overline Q_R(Y))}^2
\le C R^{-n} \quad \forall R<\bar{d}_Y/2,\\
\label{eqn:G-34}
\tri{\vec{G}(\cdot,Y)}_{\U\setminus \overline Q_R(Y)}^2
\le C R^{-n} \quad \forall R<\bar{d}_Y/2,\\
\label{eqn:G-35}
\abs{\set{X\in\U:\abs{\vec{G}(X,Y)}>\tau}}\le C \tau^{-\frac{n+2}{n}}
\quad\forall \tau>(\bar{d}_Y/2)^{-n},\\
\label{eqn:G-36}
\abs{\set{X\in\U:\abs{D_x\vec{G}(X,Y)}>\tau}}\le C \tau^{-\frac{n+2}{n+1}}
\quad\forall \tau> (\bar{d}_Y/2)^{-n}.
\end{gather}
Next, we prove \eqref{eq5.21} in Theorem~\ref{thm1}.
Let $\eta\in C^\infty_c(\U)$ such that
$\eta\equiv 1$ on $Q_r(Y)$ for some $r<d_Y$.
Since $\bar{d}_Y\leq d_Y$, we may assume that $r<\bar{d}_Y$.
Then, as in \eqref{eq7.01}, we may assume that
there is a function $\hat{\vec G}(\cdot,Y)$ in $\rV_2(\U)^{N\times N}$
such that
\begin{equation}  \label{eq9.01}
(1-\eta)\vec{G}^{\rho_\mu}(\cdot,Y)\wto
\hat{\vec G}(\cdot,Y)
\quad\text{``very weakly'' in } V_2(\U)^{N\times N}.
\end{equation}
Note that \eqref{eq9.01} in particular implies that
\begin{equation*}
\lim_{\mu\to\infty}
\int_{\U}(1-\eta)\vec{G}^{\rho_\mu}(\cdot,Y)\vec{\phi}
=\int_{\U} \hat{\vec G}(\cdot,Y) \vec{\phi}\quad
\forall \vec{\phi}\in C^\infty_c(\U).
\end{equation*}
On the other hand, by \eqref{eqn:G-24} and \eqref{eqn:G-25}, we find that
for all $\vec{\phi}\in C^\infty_c(\U)$,
\begin{align*}
&\lim_{\mu\to\infty}
\int_{\U}(1-\eta)\vec{G}^{\rho_\mu}(\cdot,Y)\vec{\phi}\\
&\quad=\lim_{\mu\to\infty}\left(
\int_{Q_{\bar{d}_Y}(Y)}\vec{G}^{\rho_\mu}(\cdot,Y)(1-\eta)\vec{\phi}
+\int_{\U\setminus Q_{\bar{d}_Y}(Y)}
(1-\zeta_Y)\vec{G}^{\rho_\mu}(\cdot,Y)(1-\eta)\vec{\phi}\right)\\
&\quad =\int_{Q_{\bar{d}_Y}(Y)} \vec G(\cdot,Y)(1-\eta)\vec{\phi}
+\int_{\U\setminus Q_{\bar{d}_Y}(Y)}\vec{G}(\cdot,Y)(1-\eta)\vec{\phi}
=\int_{\U}(1-\eta)\vec G(\cdot,Y)\vec{\phi}.
\end{align*}
Therefore, we must have
$\hat{\vec G}(\cdot,Y)\equiv(1-\eta)\vec G(\cdot,Y)$,
and thus $(1-\eta)\vec G(\cdot,Y) \in \rV_2(\U)$.
Then, by following a similar argument as in Section~\ref{sec0303}, we find
that $(1-\eta)\vec G(\cdot,Y)$ belongs to $\rV^{1,0}_2(\U)$, which
proves \eqref{eq5.21}.

\subsection{Continuity of the Green's matrix}
Proceeding as in Section~\ref{sec:3.4},
we find that $\vec{G}(\cdot,Y)$ is H\"older continuous in $\U\setminus\set{Y}$.
Also, we may assume that
\begin{equation}  \label{eq3.420p}
\vec{G}^{\rho_\mu}(\cdot,Y) \rightarrow\vec{G}(\cdot,Y)
\text{ uniformly on $K$, for any compact } K\Subset \U\setminus\set{Y}.
\end{equation}
Denote by ${}^t\!\vec{G}^{\sigma}(\cdot,X)$ the averaged
Green's matrix associated to $\cLt$ with a pole at $X\in\U$.
More precisely, for $1\le k \le N$ and $X=(t,x)\in \U$ fixed,
let $\vec{w}_\sigma=\vec{w}_{\sigma;X,k}$
be the unique weak solution in $\rV^{1,0}_2((-\infty,t_0)\times\Omega)^N$
of the backward problem
\begin{equation*}  
\left\{\begin{array}{l l}
\cLt \vec{u}=\frac{1}{\abs{\U^+_\sigma(X)}}1_{\U^+_\sigma(X)} \vec{e}_k\\
\vec{u}(t_0,\cdot)= 0,\end{array}\right.
\end{equation*}
where $t_0>t+\sigma^2$ is fixed but arbitrary.
Then, as before, we may extend $\vec{w}_\sigma$ to the entire $\U$
by setting
$\vec{w}_\sigma\equiv 0$  on $(t+\sigma^2,\infty)\times \Omega$
so that $\vec{w}_\sigma$ belongs to $\rV^{1,0}_2(\U)$ and satisfies for
all $s_1<t$ the identity
\begin{equation} \label{eq2.241p}
\int_{\Omega}w_\sigma^i \phi^i(s_1,\cdot)
+\int_{s_1}^\infty\!\!\!\int_{\Omega}w_\sigma^i \phi^i_t
+\int_{s_1}^\infty\!\!\!\int_{\Omega}
{}^t\!A^{\alpha\beta}_{ij}D_\beta w_\sigma^j D_\alpha \phi^i
= \dashint_{\U^+_\sigma(X)}\phi^k
\end{equation}
for all $\vec{\phi}\in C^\infty_{c,p}(\U)^N$.
The averaged Green's matrix
${}^t\!\vec{G}^\sigma(\cdot,X)
=({}^t\!G^\sigma_{jk}(\cdot,X))_{j,k=1}^N$ for $\cLt$ in $\U$ is
then defined by
\begin{equation*}
{}^t\!G^\sigma_{jk}(\cdot,X)=w_\sigma^j=w^j_{\sigma;Y,k}.
\end{equation*}
By a similar argument as appears in the previous subsection,
we obtain a sequence $\set{\sigma_\nu}_{\nu=1}^\infty$ tending to $0$
and functions
${}^t\!\vec{G}(\cdot,X)\in W^{1,q}_x(Q_{\bar{d}_X}(X))^{N\times N}$
for some $q>1$ and 
${}^t\!\tilde{\vec G}(\cdot,X)\in \rV_2(\U)^{N\times N}$
such that as $\nu\to\infty$
\begin{align}
\label{eqn:G-24p}
{}^t\!\vec{G}^{\sigma_\nu}(\cdot,X)&\wto {}^t\!\vec{G}(\cdot,X)
\text{ weakly in }W^{1,q}_x(Q_{\bar{d}_X}(X))^{N\times N},\\
\label{eqn:G-25p}
(1-\zeta_X){}^t\!\vec{G}^{\sigma_\nu}(\cdot,X)&\wto
{}^t\!\tilde{\vec G}(\cdot,X) \text{ ``very weakly'' in } V_2(\U)^{N\times N},
\end{align}
where $\zeta_X$ is given similarly as in \eqref{eq8.01}.
Since we must have
${}^t\!\vec{G}(\cdot,X)\equiv {}^t\!\tilde{\vec{G}}(\cdot,X)$
on $Q_{\bar{d}_X}(X)\setminus Q_{\bar{d}_X/2}(X)$,
we may extend ${}^t\!\vec{G}(\cdot,X)$ to the entire $\U$
by setting ${}^t\!\vec{G}(\cdot,X)=\tilde{\vec{G}}(\cdot,X)$
on $\U\setminus Q_{\bar{d}_X}(X)$.
We shall call this extended function a Green's matrix for $\cLt$ in
$\U$ with a pole at $X$ and still denote it by ${}^t\!\vec{G}(\cdot,X)$.
Then, by the same reasoning as in the previous subsection,
we find that ${}^t\!\vec{G}(\cdot,X)$ is
H\"older continuous in $\U\setminus\set{X}$ and it satisfies
\begin{equation*}
\int_{\U}\left({}^t\!G_{ik}(\cdot,X)\phi^i_t+
{}^t\!A^{\alpha\beta}_{ij}D_\beta{}^t\!G_{jk}(\cdot,X)D_\alpha \phi^i \right)
=\phi^k(X) \quad\forall\vec\phi\in C^\infty_c(\U)^N
\end{equation*}
and all the properties corresponding to \eqref{eqn:G-32}--\eqref{eqn:G-36}.
Therefore, the estimates \eqref{eq5.15}--\eqref{eq5.20} will follow once
we establish the following identity, the proof of which is essentially given
in Lemma~\ref{lem:3.5}:
\begin{equation}
\label{eqn:E-66p}
\vec{G}(X,Y)={}^t\!\vec{G}(Y,X)^T \quad \forall X,Y\in \U,\quad X\neq Y.
\end{equation}
Moreover, \eqref{eqn:E-66p} also implies that
$\vec{G}(X,\cdot)$ is H\"older continuous in $\U\setminus\set{X}$.
Furthermore, as in \eqref{eqn:E-68} and \eqref{eq3.786} we have
\begin{equation*}  
\vec{G}^\rho(X,Y)=\dashint_{\U_\rho^-(Y)} \vec{G}(X,Z)\,dZ
\quad \forall X,Y\in\U,\quad X\neq Y\quad\text{and}
\end{equation*}
\begin{equation*}  
\lim_{\rho\to 0} \vec{G}^\rho(X,Y)=\vec{G}(X,Y)
\quad \forall X,Y\in \U,\quad X\neq Y.
\end{equation*}
Then, from \eqref{eq11.16p} and \eqref{eqn:E-66p}, we find
\begin{equation} \label{eq3.6.0p}
\abs{\vec{G}(X,Y)}\leq C |X-Y|_p^{-n}\quad
\text{if}\quad 0<|X-Y|_p<\tfrac{1}{2}\max(\bar{d}_X,\bar{d}_Y)
\end{equation}
and also by \eqref{eq11.54p}, we obtain
\begin{equation}  \label{eq11.55p}
\vec{G}(\cdot,\cdot,s,y)\equiv 0\quad\text{in}\quad(-\infty,s)\times \Omega.
\end{equation}

\subsection{Proof of the identity \eqref{eq5.23}}  \label{sec:4.4}
Let $\vec{g}\in L^2(\Omega)^N$ be given and
let $\vec{u}$ be the unique weak solution in
$\rV^{1,0}_2((s,\infty)\times\Omega)^N$ of the problem \eqref{eq5.88}.
By proceeding similarly as in Section~\ref{sec3.5} we derive
the following representation for $\vec{u}(t,x)$:
\begin{equation}  \label{eqn:3.58p}
\vec{u}(t,x)=\int_{\Omega} \vec{G}(t,x,s,y)\vec{g}(y)\,dy.
\end{equation}
Similarly, for $\vec{g}\in L^2(\Omega)^N$, let $\vec{u}$
be the unique weak solution in $\rV^{1,0}_2((-\infty,t)\times\Omega)^N$ of
the backward problem \eqref{eq3.15}.
As in \eqref{eq3.16}, $\vec{u}$ has the following representation:
\begin{equation} \label{eq3.16p}
\vec{u}(s,y)=
\int_{\Omega} {}^t\!\vec{G}(s,y,t,x)\vec{g}(x)\,dx.
\end{equation}

We need the following lemmas to prove \eqref{eq5.23} of Theorem~\ref{thm1}.
\begin{lemma}  \label{lem4.1}
Assume that $\vec{g}\in L^2(\Omega)^N$ is supported in a closed set
$F\subset\Omega$.
Let $\vec{u}$ be the weak solution in
$\rV^{1,0}_2((s,\infty)\times\Omega)^N$ of the problem \eqref{eq5.88}.
Then,
\begin{equation}  \label{gaffney}
\int_E \abs{\vec{u}(t,\cdot)}^2 \leq e^{-c\dist(E,F)^2/(t-s)}
\int_F \abs{\vec{g}}^2\quad\forall E\subseteq \Omega,
\end{equation}
where $\dist(E,F)=\inf\set{|x-y|: x\in E, y\in F}$ and
$c=\lambda/(2\Lambda^2)$.
\end{lemma}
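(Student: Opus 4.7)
The plan is to prove this Davies--Gaffney-type off-diagonal estimate by the classical exponential weight method. Let $d=\dist(E,F)$ and define the truncated distance function $\eta(x):=\min(\dist(x,F),d)$, which is Lipschitz with $\abs{D\eta}\le 1$, vanishes on $F$, and equals $d$ on $E$. For a parameter $\alpha>0$ to be optimized later, I would use the weight $e^{2\alpha\eta(x)}\vec{u}(t,x)$ as a test function in the weak formulation \eqref{eq:c01} of the problem \eqref{eq5.88}. Since $\eta$ is bounded and Lipschitz, and $\vec{u}\in\rV^{1,0}_2((s,\infty)\times\Omega)^N$, the product $e^{2\alpha\eta}\vec{u}$ lies in the correct space after a standard density and Steklov-averaging argument (cf.\ the justifications in \cite[\S III.2]{LSU}), so this substitution is legitimate.

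Setting $I(t):=\int_\Omega e^{2\alpha\eta(x)}\abs{\vec{u}(t,x)}^2\,dx$, the resulting identity combined with the strong parabolicity \eqref{eqn:P-02} and the coefficient bound \eqref{eqn:P-03} yields
\begin{equation*}
\tfrac{1}{2}\,I'(t)+\lambda\int_\Omega e^{2\alpha\eta}\abs{D\vec u}^2
\le 2\alpha\Lambda\int_\Omega e^{2\alpha\eta}\abs{D\vec u}\,\abs{\vec u},
\end{equation*}
where the right-hand side comes from the $D_\alpha(e^{2\alpha\eta})=2\alpha D_\alpha\eta\, e^{2\alpha\eta}$ factor together with $\abs{D\eta}\le 1$. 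Applying Cauchy--Schwarz with parameter to absorb the gradient term into the left,
\begin{equation*}
2\alpha\Lambda\int e^{2\alpha\eta}\abs{D\vec u}\abs{\vec u}\le \lambda\int e^{2\alpha\eta}\abs{D\vec u}^2+\frac{\alpha^2\Lambda^2}{\lambda}\int e^{2\alpha\eta}\abs{\vec u}^2,
\end{equation*}
which leaves $I'(t)\le (2\alpha^2\Lambda^2/\lambda)\,I(t)$.

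Gronwall then gives $I(t)\le e^{2(t-s)\alpha^2\Lambda^2/\lambda}\,I(s)$. Since $\vec{g}$ is supported in $F$ and $\eta\equiv 0$ on $F$, we have $I(s)=\int_F\abs{\vec g}^2$; since $\eta\ge d$ on $E$, we have $I(t)\ge e^{2\alpha d}\int_E\abs{\vec u(t,\cdot)}^2$. Therefore
\begin{equation*}
\int_E\abs{\vec u(t,\cdot)}^2\le \exp\!\Big(-2\alpha d+\tfrac{2(t-s)\alpha^2\Lambda^2}{\lambda}\Big)\int_F\abs{\vec g}^2,
\end{equation*}
and optimizing over $\alpha>0$ with the choice $\alpha=\lambda d/(2(t-s)\Lambda^2)$ produces the exponent $-\lambda d^2/(2\Lambda^2(t-s))$, which is the asserted constant $c=\lambda/(2\Lambda^2)$.

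The only subtle point is the justification that $e^{2\alpha\eta}\vec{u}$ is an admissible test function and that the energy identity for $I(t)$ holds in the absolutely continuous sense. This is routine but needs care because $\vec{u}\in\rV^{1,0}_2$ has no a priori time derivative; the standard remedy is to use Steklov averages $\vec{u}_h(t,x):=h^{-1}\int_t^{t+h}\vec{u}(\tau,x)\,d\tau$, test the averaged equation with $e^{2\alpha\eta}\vec{u}_h$, and pass to the limit $h\to 0$ using the continuity in $t$ with values in $L^2(\Omega)^N$ afforded by the $\rV^{1,0}_2$ space. This is exactly the kind of approximation carried out throughout \cite[\S III.2]{LSU}, so I would simply invoke it rather than reproducing it in detail.
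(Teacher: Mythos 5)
Your proof is correct and follows essentially the same route as the paper: you take the exponential weight $e^{2\alpha\eta}$ with $\eta=\min(\dist(\cdot,F),d)$ (the paper writes $\psi=\gamma\min\{\dist(\cdot,F),\dist(E,F)\}$, i.e.\ your $\alpha\eta$), differentiate $I(t)=\int_\Omega e^{2\alpha\eta}\abs{\vec u}^2$ via the energy identity, absorb the cross term with Cauchy--Schwarz using \eqref{eqn:P-02}--\eqref{eqn:P-03}, apply Gronwall, and optimize the free parameter. The choice $\alpha=\lambda d/(2\Lambda^2(t-s))$ is exactly the paper's $\gamma$, and the resulting constant $c=\lambda/(2\Lambda^2)$ matches.
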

\begin{proof}
We may assume that $\dist(E,F)>0$; otherwise \eqref{gaffney} is an
immediate consequence of the energy inequality.
Let $\psi$ be a bounded Lipschitz function on $\Omega$ satisfying
$\abs{D\psi}\le \gamma$ a.e. for some $\gamma>0$ to be fixed later.
Denote
\begin{equation*}
I(t):=\int_{\Omega} e^{2\psi}\abs{\vec{u}(t,\cdot)}^2\quad\forall t\ge s.
\end{equation*}
By following \cite[\S III.4]{LSU}, it is not hard to see that $I$
is absolutely continuous on $[s,\infty)$
and that $I'(t)$ satisfies for a.e. $t>s$
\begin{align*}
I'(t) &=-2\int_\Omega
A^{\alpha\beta}_{ij}D_\beta u^j(t,\cdot) D_\alpha(e^{2\psi}u^i(t,\cdot))\\
&=-2\int_\Omega
e^{2\psi}A^{\alpha\beta}_{ij}D_\beta u^j(t,\cdot) D_\alpha u^i(t,\cdot)
-4\int_\Omega
e^{2\psi}A^{\alpha\beta}_{ij}D_\beta u^j(t,\cdot) D_\alpha\psi\,u^i(t,\cdot)\\
&\leq -2\lambda\int_{\Omega} e^{2\psi}\abs{D\vec{u}(t,\cdot)}^2+4\Lambda\gamma
\int_\Omega e^\psi\abs{D\vec{u}(t,\cdot)} e^\psi\abs{\vec{u}(t,\cdot)}\\
&\leq(2\Lambda^2/\lambda)\gamma^2
\int_\Omega e^{2\psi}\abs{\vec{u}(t,\cdot)}^2
=(2\Lambda^2/\lambda)\gamma^2 I(t).
\end{align*}
The above differential inequality yields
\begin{equation}  \label{eqx001}
I(t)\leq e^{(2\Lambda^2/\lambda)\gamma^2(t-s)}
\|e^\psi\vec{g}\|_{L^2(F)}^2.
\end{equation}
Note that since $F$ is closed, the function
$\dist(x,F)=\min\set{|x-y|:y\in F}$ is a Lipschitz function with
Lipschitz constant $1$ and $\dist(E,F)=\inf_{x\in E} \dist(x,F)$.
Therefore, if we set $\psi(x)=\gamma\min\{\dist(x,F),\dist(E,F)\}$,
then by \eqref{eqx001}, we find
\begin{equation*}
\int_E \abs{\vec{u}(t,\cdot)}^2
\leq \exp\{(2\Lambda^2/\lambda)\gamma^2(t-s)-2\gamma\dist(E,F)\}
\int_F \abs{\vec{g}}^2.
\end{equation*}
The lemma follows if we set
$\gamma=\dist(E,F)/\{(2\Lambda^2/\lambda)(t-s)\}$.
\end{proof}

\begin{lemma}  \label{lem4.2}
Assume that the operator $\cL$ satisfies the property (PH).
Let $\vec{u}$ be the weak solution in
$\rV^{1,0}_2((s,\infty)\times\Omega)^N$ of the problem \eqref{eq5.88}
with $\vec{g}\in L^\infty_c(\Omega)^N$.
Then,
\begin{equation*}    
\abs{\vec{u}(t,x)}\leq C \norm{\vec{g}}_{L^\infty(\Omega)}
\quad\text{if } \sqrt{t-s}<\bar{d}_X,
\end{equation*}
where $C=C(n,N,\lambda,\Lambda,\mu_0,C_0)>0$.
\end{lemma}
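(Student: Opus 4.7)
The plan is to reduce the pointwise estimate to an $L^2$ average via the interior $L^\infty$ bound supplied by property (PH), and then to control that $L^2$ average using the Gaffney-type decay in Lemma~\ref{lem4.1} together with a dyadic decomposition of the initial datum $\vec{g}$.

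More precisely, set $r=\sqrt{t-s}/2$. Since $\sqrt{t-s}<\bar{d}_X=\min(d_X,R_c)$, we have $r<R_c$ and $B_r(x)\subset\Omega$, so $Q^-_r(X)\subset(s,\infty)\times\Omega$. Because $\vec u$ is a weak solution of $\cL\vec u=0$ on $(s,\infty)\times\Omega$, Lemma~\ref{lem3} applies on $Q^-_r(X)$ and yields
\begin{equation*}
\|\vec u\|_{L^\infty(Q^-_{r/4}(X))}\le C\left(\dashint_{Q^-_r(X)}|\vec u|^2\right)^{1/2}.
\end{equation*}
Property (PH) also forces $\vec u$ to be H\"older continuous up to the top face of $Q^-_r(X)$, so this bound extends to $X$ itself. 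It therefore suffices to prove $\dashint_{Q^-_r(X)}|\vec u|^2\le C\|\vec g\|_{L^\infty(\Omega)}^2$.

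For this, I decompose $\vec g=\sum_{k\ge 0}\vec g_k$, where $\vec g_0=\vec g\,1_{B_{2r}(x)}$ and $\vec g_k=\vec g\,1_{B_{2^{k+1}r}(x)\setminus B_{2^k r}(x)}$ for $k\ge 1$, and let $\vec u_k$ be the corresponding weak solution of \eqref{eq5.88} with datum $\vec g_k$, so that $\vec u=\sum_k\vec u_k$ by linearity and uniqueness (Lemma~\ref{lem13.1}). For any $\tau\in(t-r^2,t)$ we have $0<\tau-s<4r^2$. The energy inequality gives
\begin{equation*}
\int_{B_r(x)}|\vec u_0(\tau,\cdot)|^2\le\|\vec g_0\|_{L^2(\Omega)}^2\le C r^n\|\vec g\|_{L^\infty(\Omega)}^2,
\end{equation*}
while for $k\ge 1$ the distance from $B_r(x)$ to $\mathrm{supp}\,\vec g_k$ is at least $(2^k-1)r\ge 2^{k-1}r$, so Lemma~\ref{lem4.1} yields
\begin{equation*}
\int_{B_r(x)}|\vec u_k(\tau,\cdot)|^2\le e^{-c\,2^{2k-4}}\|\vec g_k\|_{L^2(\Omega)}^2\le C\,2^{(k+1)n}r^n e^{-c\,2^{2k-4}}\|\vec g\|_{L^\infty(\Omega)}^2.
\end{equation*}
Summing in $k$ the geometric decay beats the polynomial growth, giving $\int_{B_r(x)}|\vec u(\tau,\cdot)|^2\le Cr^n\|\vec g\|_{L^\infty(\Omega)}^2$ uniformly in $\tau\in(t-r^2,t)$. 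Integrating in $\tau$ and dividing by $|Q^-_r(X)|\asymp r^{n+2}$ yields the desired $L^2$ average bound, which combined with the first step proves the lemma.

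The main technical point is the Gaffney dyadic sum: one has to verify that the factor $e^{-c\,2^{2k-4}}$ indeed dominates the volume factor $2^{(k+1)n}$, and that the choice $r=\sqrt{t-s}/2$ keeps $\tau-s$ comparable to $r^2$ so that the exponential in Lemma~\ref{lem4.1} produces constant-order decay per dyadic scale. The use of continuity of $\vec u$ at $X$ (needed to evaluate the $L^\infty$ bound of Lemma~\ref{lem3} at a point on the top face of $Q^-_r(X)$) is automatic from (PH) via Lemmas~\ref{lem3.1} and~\ref{lem4}, so no additional argument is required there.
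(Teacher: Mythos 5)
Your proof is correct and follows essentially the same route as the paper: decompose $\vec g$ into dyadic annular pieces around $x$, apply the Gaffney-type $L^2$-decay of Lemma~\ref{lem4.1} to each piece, sum the geometric series, and convert the resulting $L^2$ average bound over a parabolic cylinder into a pointwise bound at $X$ via the local $L^\infty$ estimate of Lemma~\ref{lem3}. The only cosmetic difference is that the paper applies Lemma~\ref{lem3} to each summand $\vec u_k$ and then sums the pointwise bounds, whereas you first sum (via Minkowski) to control $\dashint_{Q^-_r(X)}|\vec u|^2$ and then apply Lemma~\ref{lem3} once; both orderings work.
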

\begin{proof}
Let $\rho:=\sqrt{t-s}$ and assume $\rho<\bar{d}_X$.
Denote $A_0=B_{\rho}(x)$ and
$A_k=\set{y\in\Omega: 2^{k-1}\rho\leq |y-x|<2^k\rho}$ for $k\ge 1$.
Since $\vec{g}$ is compactly supported in $\Omega$, we see that
$\vec{g}=\sum_{k=0}^{k_0}\vec{g} 1_{A_k}$ for some $k_0<\infty$.
Denote
\begin{equation*}
\vec{u}_k(t,x)=\int_{A_k} \vec{G}(t,x,s,y)\vec{g}(y)\,dy.
\end{equation*}
Then, it follows from \eqref{eqn:3.58p} that
$\vec{u}=\sum_{k=0}^{k_0} \vec{u}_k$ and that each $\vec{u}_k$ is
a unique weak solution in $\rV^{1,0}_2((s,\infty)\times\Omega)^N$
of the problem \eqref{eq5.88}
with $\vec{g}1_{A_k}$ in place of $\vec{g}$.
If we apply Lemma~\ref{lem4.1} to $\vec{u}_k$ with $E=B_\rho(x)$ and
$F=\overline A_k$, we obtain
\begin{equation*}
\int_{B_\rho(x)}\abs{\vec{u}_k(\tau,y)}^2\,dy \leq
Ce^{-c 2^k}2^{kn}\rho^n\norm{\vec{g}}_{L^\infty(\Omega)}^2
\quad\forall \tau\in (s,t).
\end{equation*}
Therefore, by Lemma~\ref{lem3} we estimate
\begin{equation*}
|\vec{u}(t,x)|\leq \sum_{k=0}^{k_0}|\vec{u}_k(t,x)|\leq
C \sum_{k=1}^\infty e^{-c 2^k}2^{kn/2} \norm{\vec{g}}_{L^\infty(\Omega)}
\leq C \norm{\vec{g}}_{L^\infty(\Omega)}.
\end{equation*}
The lemma is proved.
\end{proof}

\begin{lemma}  \label{lem4.3}
Let $x_0\in\Omega$ and $R<\frac{1}{2}\min(\dist(x_0,\partial\Omega),R_c)$
be given.
Assume that $\eta\in C^\infty_c(B_{2R}(x_0))$  satisfies
$0\leq \eta \leq 1$, $\eta\equiv 1$ in $B_R(x_0)$, and $\abs{D\eta}\leq C/R$.
Then,
\begin{equation*}
\lim_{t\to s} \int_\Omega \vec{G}(t,x,s,y) \eta(y)\,dy = \vec I
\quad \forall x\in B_R(x_0),
\end{equation*}
where $\vec G(t,x,s,y)$ is the Green's matrix of $\cL$ as constructed above
and $\vec I$ is the $N$ by $N$ identity matrix.
\end{lemma}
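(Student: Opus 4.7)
The plan is to work columnwise: for each $k=1,\ldots,N$, introduce
\begin{equation*}
\vec{u}_k(t,x) := \int_\Omega \vec{G}(t,x,s,y)\eta(y)\vec{e}_k\,dy,
\end{equation*}
which by the representation \eqref{eqn:3.58p} is the unique weak solution in $\rV^{1,0}_2((s,\infty)\times\Omega)^N$ of $\cL\vec{u}_k=0$ with $\vec{u}_k(s,\cdot)=\eta\vec{e}_k$. The lemma reduces to showing $\vec{u}_k(t,x)\to\vec{e}_k$ as $t\to s^+$ for each fixed $x\in B_R(x_0)$ and each $k$.

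The key idea is to subtract the constant $\vec{e}_k$ locally and then extend by zero across the initial time. Fix $x\in B_R(x_0)$ and pick $r_0\in(0,\min(R-|x-x_0|,R_c/2))$ so that $B_{r_0}(x)\subset B_R(x_0)\subset\Omega$; set $R_0:=r_0/2$. Choose a cutoff $\psi\in C^\infty_c(B_R(x_0))$ with $\psi\equiv 1$ on $B_{r_0}(x)$, and define $\vec{w}:=\vec{u}_k-\psi\vec{e}_k$. Since $D\psi\equiv 0$ on $B_{r_0}(x)$, a direct computation gives $\cL(\psi\vec{e}_k)=0$ distributionally on $(s,\infty)\times B_{r_0}(x)$, so $\cL\vec{w}=0$ there. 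Because $\eta\equiv\psi\equiv 1$ on $B_{r_0}(x)$, the difference $\vec{w}(s,\cdot)=(\eta-\psi)\vec{e}_k$ vanishes on $B_{r_0}(x)$ in the $L^2$ sense inherited from the $\rV^{1,0}_2$ framework. Lemma~\ref{lem4.2} provides a uniform bound $|\vec{w}|\le M$ on $(s,s+R_0^2)\times B_{R_0}(x)$.

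Next, extend $\vec{w}$ by zero to $t\le s$ to obtain $\tilde{\vec{w}}$. Multiplying test functions by smooth cutoffs $\chi_\epsilon(t)$ that interpolate from $0$ (for $t\le s$) to $1$ (for $t\ge s+\epsilon$) and sending $\epsilon\to 0$, one checks that $\tilde{\vec{w}}$ is a weak solution of $\cL\tilde{\vec{w}}=0$ on $(-\infty,\infty)\times B_{r_0}(x)$: the boundary term produced by $\chi'_\epsilon\rightharpoonup\delta_s$ vanishes precisely because $\vec{w}(s,\cdot)=0$ on $B_{r_0}(x)$. Applying the interior $L^\infty$ bound \eqref{eq4.03} of Lemma~\ref{lem3} on $Q^-_{R_0}(s+\epsilon,x)\subset\bR\times B_{r_0}(x)$ and exploiting $\tilde{\vec{w}}\equiv 0$ for $t\le s$, for $0<\epsilon<R_0^2$ we obtain
\begin{equation*}
|\vec{w}(s+\epsilon,x)|^2 \le C R_0^{-n-2}\int_{Q^-_{R_0}(s+\epsilon,x)}|\tilde{\vec{w}}|^2 \le C R_0^{-n-2}\,\epsilon\,|B_{R_0}|\,M^2 \longrightarrow 0
\end{equation*}
as $\epsilon\to 0^+$. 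Since $\psi(x)=1$, this is exactly $\vec{u}_k(s+\epsilon,x)\to\vec{e}_k$, and the lemma follows.

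The main obstacle is the extension step: justifying that the zero-extension $\tilde{\vec{w}}$ is a bona fide weak solution across the initial boundary $t=s$. This rests on two facts---that $\vec{w}$ attains its zero initial datum in $L^2(B_{r_0}(x))$, and that $\cL(\psi\vec{e}_k)=0$ on $B_{r_0}(x)$---both of which hinge on the careful choice $\psi\equiv 1$ on $B_{r_0}(x)$. Once the extension is in place, the remainder is a direct application of the interior regularity (property (PH)) packaged in Lemma~\ref{lem3}.
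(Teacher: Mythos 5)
Your proof is correct, and it takes a genuinely different route from the one in the paper.

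The paper's proof works directly with the variational identity for the transpose Green's matrix: it first establishes the identity \eqref{eq3.431p} (by passing to the limit in \eqref{eq2.241p}), then substitutes the time-independent test function $\vec\phi(Y)=\eta(y)\vec e_l$ with $s_1=s$, so that $\delta_{kl}$ splits into $I=\int_\Omega G_{kl}(t,x,s,\cdot)\eta$ plus an error term $II$ supported in the annulus where $D\eta\neq 0$. The error is then estimated via H\"older's inequality and the global energy bound \eqref{eqn:G-34}, yielding $\abs{II}\le C R^{-1}(t-s)^{1/2}\to 0$. The whole argument is driven by the identity $\vec G(X,Y)={}^t\!\vec G(Y,X)^T$ and the energy estimate for ${}^t\!\vec G$, and it converges \emph{uniformly} in $x\in B_R(x_0)$ with an explicit rate.

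Your approach instead works forward from the representation formula \eqref{eqn:3.58p}, realizing $\vec u_k(t,\cdot)=\int_\Omega\vec G(t,\cdot,s,y)\eta(y)\vec e_k\,dy$ as the solution to the Cauchy problem with initial datum $\eta\vec e_k$, then subtracting a locally-constant cutoff $\psi\vec e_k$, extending $\vec w=\vec u_k-\psi\vec e_k$ by zero to $t\le s$, and invoking the interior $L^\infty$ bound \eqref{eq4.03} on a parabolic cylinder straddling $\{t=s\}$. The geometric gain is that most of the cylinder $Q^-_{R_0}(s+\epsilon,x)$ lies in $\{t<s\}$, where $\tilde{\vec w}\equiv 0$, so the mean-square average is $O(\epsilon)$. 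The uniform bound $\abs{\vec w}\le M$ comes from Lemma~\ref{lem4.2}. This is a classical "compare to a constant, extend by zero, apply interior regularity'' strategy for initial-data attainment; it avoids the transpose Green's matrix entirely, at the cost of a slightly more delicate justification of the extension step (that $\tilde{\vec w}$ is a genuine weak solution across $t=s$), which you correctly identify as the crux. That step is sketched via $\chi_\epsilon$-cutoffs rather than carried out in full, but the argument is standard given the $V^{1,0}_2$ continuity of $\vec u_k$ at $t=s$ (inherited from \eqref{eq:c01}) and the fact that test functions are supported away from $\partial B_{r_0}(x)$. Note your argument establishes the convergence pointwise in $x$, whereas the paper's $O((t-s)^{1/2}/R)$ estimate is uniform over $x\in B_R(x_0)$; for the purpose of Lemma~\ref{lem4.3} itself pointwise suffices, but the uniformity is silently used later in the proof of \eqref{eq5.23}, so if one adopted your route one would want to track that $M$ and $R_0$ can be chosen uniformly for $x$ in compact subsets of $B_R(x_0)$ — which they can, but it requires a sentence.
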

\begin{proof}
First, we claim that the following identity holds for all $s_1<t$:
\begin{align}
\label{eq3.431p}
\phi^k(X)&=\int_{\Omega}{}^t\!G_{ik}(s_1,\cdot,t,x)\phi^i(s_1,\cdot)
+\int_{s_1}^t\!\int_{\Omega}{}^t\!G_{ik}(\cdot,X)\phi^i_t\\
\nonumber
&\qquad+\int_{s_1}^t\!\int_{\Omega}
{}^t\!A^{\alpha\beta}_{ij}D_\beta {}^t\!G_{jk}(\cdot,X) D_\alpha \phi^i
\quad \forall \vec{\phi}\in C^\infty_{c,p}(\U)^N.
\end{align}
Indeed, by \eqref{eq2.241p}, we have for all $s_1<t$,
\begin{align}
\label{eq3.440p}
\dashint_{\U_{\sigma_\nu}^+(X)}\phi^k&=
\int_\Omega{}^t\!G^{\sigma_\nu}_{ik}(s_1,\cdot,t,x)\phi^i(s_1,\cdot)
+\int_{s_1}^\infty\!\!\!\int_\Omega {}^t\!G^{\sigma_\nu}_{ik}(\cdot,X)\phi^i_t\\
\nonumber
&\qquad+\int_{s_1}^\infty\!\!\!\int_\Omega {}^t\!A^{\alpha\beta}_{ij}
D_\beta{}^t\!G^{\sigma_\nu}_{jk}(\cdot,X)D_\alpha\phi^i\quad
\forall \vec{\phi}\in C^\infty_{c,p}(\U)^N.
\end{align}
Note that by estimates similar to \eqref{eq3.420p} and \eqref{eq3.6.0p},
we have
\begin{equation*}
\lim_{\nu\to \infty}
\int_{\Omega}{}^t\!G^{\sigma_\nu}_{ik}(s_1,\cdot,t,x)\phi^i(s_1,\cdot)
=\int_{\Omega}{}^t\!G_{ik}(s_1,\cdot,t,x)\phi^i(s_1,\cdot).
\end{equation*}
Then, \eqref{eq3.431p} follows from a similar argument
as used in the  proof of \eqref{eq5.22} in Section~\ref{sec4.2} and 
\eqref{eq11.55p}.
If we set $\vec{\phi}(Y)=\eta(y)\vec{e}_l$
and $s_1=s<t$ in \eqref{eq3.431p},
then
\begin{align}
\nonumber
\delta_{kl}&=\int_{\Omega}{}^t\!G_{lk}(s,\cdot,t,x) \eta
+\int_s^t\!\!\int_\Omega {}^t\!A^{\alpha\beta}_{lj}D_\beta {}^t\!G_{jk}(\cdot,X)
D_\alpha\eta\\
\label{eq387p}
&=\int_{\Omega}G_{kl}(t,x,s,\cdot) \eta
+\int_s^t\!\!\int_\Omega {}^t\!A^{\alpha\beta}_{lj}D_\beta {}^t\!G_{jk}(\cdot,X)
D_\alpha\eta=:I+II.
\end{align}
By using H\"older's inequality and \eqref{eqn:G-34}, we estimate
\begin{align*}
\abs{II}&\le C R^{n/2-1}(t-s)^{1/2}
\left(\int_s^t\!\!\!\int_{B_R(x_0)\setminus B_{R/2}(x_0)}
|D{}^t\!\vec G(\cdot,X_0)|^2\right)^{1/2}\\
&\le C R^{-1}(t-s)^{1/2}\qquad\text{if } \sqrt{t-s}< R.
\end{align*}
Therefore, the lemma follows by taking the limit $t$ to $s$ in \eqref{eq387p}.
\end{proof}

Now, we shall prove \eqref{eq5.23} of Theorem~\ref{thm1}.
Assume that $\vec g\in L^2(\Omega)^N$ and $\vec g$ is continuous at $x_0$.
Let $\vec{u}$ be the weak solution in
$\rV^{1,0}_2((s,\infty)\times\Omega)^N$ of the problem \eqref{eq5.88}.
For a given $\epsilon>0$,
choose $R<\frac{1}{2}\min(\dist(x_0,\partial\Omega),R_c)$ such that
$\abs{\vec{g}(x)-\vec{g}(x_0)}< \epsilon/2C$
for all $x$ satisfying $\abs{x-x_0}<2R$, where $C$ is the constant
as appears in Lemma~\ref{lem4.2}.
Let $\eta$ be given as in Lemma~\ref{lem4.3} and
let $\vec{u}_0$, $\vec{u}_\epsilon$, and $\vec{u}_\infty$
be the weak solutions in
$\rV^{1,0}_2((s,\infty)\times\Omega)^N$ of the problem \eqref{eq5.88} with
$\eta\vec{g}(x_0)$, $\eta(\vec{g}-\vec{g}(x_0))$, and $(1-\eta)\vec{g}$
in place of $\vec{g}$, respectively.
Then, by the uniqueness, we have
$\vec{u}=\vec{u}_0+\vec{u}_\epsilon+\vec{u}_\infty$.
First, note that by \eqref{eqn:3.58p}, $\vec{u}_0$ has the representation
\begin{equation}  \label{eq002y}
\vec{u}_0(t,x)=\left(\int_{\Omega} \vec{G}(t,x,s,y)\eta(y)\,dy\right)
\vec{g}(x_0).
\end{equation}
Next, we apply Lemma~\ref{lem4.1} to $\vec{u}_\infty$
with $E=B_{R/2}(x_0)$ and $F=\Omega\setminus B_R(x_0)$ to find
\begin{equation*}
\int_{B_{R/2}(x_0)}\abs{\vec{u}_\infty(\tau,y)}^2\,dy \leq
e^{-cR^2/(\tau-s)}
\norm{\vec{g}}_{L^2(\Omega)}^2\quad \forall \tau>s.
\end{equation*}
Therefore, if $\rho:=\sqrt{t-s}/2<R/4$, then by Lemma~\ref{lem3},
we find
\begin{equation}  \label{eq001y}
\abs{\vec{u}_\infty(t,x)}
\leq C \rho^{-n/2} e^{-c(R/\rho)^2} \norm{\vec{g}}_{L^2(\Omega)}
\quad\forall x\in B_\rho(x_0).
\end{equation}
Finally, we estimate $\vec{u}_\epsilon$ by using Lemma~\ref{lem4.2}:
\begin{equation}  \label{eq003y}
\abs{\vec{u}_\epsilon(t,x)}\leq  \epsilon/2
\quad\text{if } \sqrt{t-s}<\bar{d}_X.
\end{equation}
Combining \eqref{eq002y}, \eqref{eq001y}, and \eqref{eq003y}, we
see that if $\sqrt{t-s}$ is sufficiently small, then
for all $x\in B_\rho(x_0)$, we have $\abs{\vec{u}(t,x)-\vec{g}(x_0)}<\epsilon$.
This completes the proof.

\subsection{Conclusion}
The proof of representation formulas \eqref{eqn:E-70} and \eqref{eq5.24}
given in Section~\ref{sec3.5} as well as the proof of the uniqueness
given in Section~\ref{sec:3.6} also works for general domains
$\Omega$ and $R_c\in (0,\infty]$.
Therefore, Theorem~\ref{thm1} is now proved.

\mysection{Proof of Theorem~\ref{thm2}}   \label{sec5}
\subsection{Proof of the Gaussian bound \eqref{eq5.27b}} \label{sec5.1}
Here, we consider the case $R_c=\infty$ and prove \eqref{eq5.27b}.
In \cite{HofKim2}, by following methods of Davies \cite{Davies} and
Fabes-Stroock \cite{FS}, Hofmann and Kim derived the upper Gaussian bound
of Aronson \cite{Aronson} under a further qualitative assumption
that the coefficients of $\cL$ are smooth.
For the reader's convenience, we reproduce their argument here,
dropping the technical assumption that the coefficients are smooth.

Let $\psi$ be a bounded Lipschitz function on $\bR^n$ satisfying
$\abs{D \psi} \le \gamma$ a.e. for some $\gamma>0$ to be chosen later.
For $t>s$, we define an operator $P^\psi_{s\to t}$ on $L^2(\bR^n)^N$ as follows.
For a given $\vec f\in L^2(\bR^n)^N$, let $\vec{u}$ be the weak solution
in $\rV^{1,0}_2((s,\infty)\times\bR^n)^N$ of the problem
\begin{equation} \label{eq3.6.1}
\left\{\begin{array}{l l}
\cL \vec{u}=0\\
\vec{u}(s,\cdot)= e^{-\psi}\vec{f}.\end{array}\right.
\end{equation}
Then, we define $P^\psi_{s\to t}\vec{f}(x):= e^{\psi(x)}\vec{u}(t,x)$.
Note that it follows from \eqref{eq5.24} that
\begin{equation}  \label{eq3.60.3}
P^\psi_{s\to t}\vec{f}(x)=
e^{\psi(x)}\int_{\bR^n} \vec{\Gamma}(t,x,s,y)e^{-\psi(y)}\vec{f}(y)\,dy
\quad\forall \vec{f}\in L^2(\bR^n)^N.
\end{equation}
We denote
$I(t):=\|e^\psi\vec{u}(t,\cdot)\|_{L^2(\bR^n)}^2$ for all $t\ge s$.
Then, as in the proof of Lemma~\ref{lem4.1},
we find that $I$ is absolutely continuous and satisfies for a.e. $t>s$,
\begin{equation*}
I'(t)\leq (2\Lambda^2/\lambda)\gamma^2 I(t).
\end{equation*}
The above differential inequality with the initial condition
$I(s)=\norm{\vec{f}}^2_{L^2(\bR^n)}$ yields
\begin{equation*}
I(t)\leq e^{(2\Lambda^2/\lambda)\gamma^2(t-s)}\norm{\vec{f}}_{L^2(\bR^n)}^2
\quad \forall t \ge s.
\end{equation*}
Since $I(t)=\|P^\psi_{s\to t}\vec{f}\|_{L^2(\bR^n)}^2$ for $t>s$,
we have derived
\begin{equation} \label{eq3.70}
\|P^\psi_{s\to t}\vec{f}\|_{L^2(\bR^n)}
\leq e^{\nu\gamma^2(t-s)}\norm{\vec{f}}_{L^2(\bR^n)}
\quad \forall t>s,
\end{equation}
where $\nu=\Lambda^2/\lambda$.
By Lemma~\ref{lem3}, we estimate
\begin{align*}
e^{-2\psi(x)}|P^\psi_{s\to t}\vec{f}(x)|^2 &= \abs{\vec{u}(t,x)}^2\\
&\le \frac{C}{(t-s)^{n/2+1}}
\int_s^t\int_{B_{\sqrt{t-s}}(x)}\abs{\vec{u}(\tau,y)}^2\,dy\,d\tau\\
&\le \frac{C}{(t-s)^{n/2+1}} \int_s^t\int_{B_{\sqrt{t-s}}(x)}e^{-2\psi(y)}
|P^\psi_{s\to\tau}\vec{f}(y)|^2\,dy\,d\tau.
\end{align*}
Hence, by using \eqref{eq3.70} we find
\begin{align*}
|P^\psi_{s\to t}\vec{f}(x)|^2&\le C(t-s)^{-n/2-1}
\int_s^t\int_{B_{\sqrt{t-s}}(x)}e^{2\psi(x)-2\psi(y)}
|P^\psi_{s\to\tau}\vec{f}(y)|^2\,dy\,d\tau\\
&\le C(t-s)^{-n/2-1} \int_s^t\int_{B_{\sqrt{t-s}}(x)}e^{2\gamma\sqrt{t-s}}
\abs{P^\psi_{s\to\tau}\vec{f}(y)}^2\,dy\,d\tau\\
&\le C (t-s)^{-n/2-1}\,e^{2\gamma\sqrt{t-s}}
\int_s^t e^{2\nu\gamma^2(\tau-s)}\norm{\vec{f}}_{L^2(\bR^n)}^2\,d\tau\\
&\le C (t-s)^{-n/2}\,e^{2\gamma\sqrt{t-s}+2\nu\gamma^2(t-s)}
\norm{\vec{f}}_{L^2(\bR^n)}^2.
\end{align*}
We have thus derived the following $L^2\to L^\infty$ estimate
for $P^\psi_{s\to t}$:
\begin{equation} \label{eq3.70.3}
\|P^\psi_{s\to t}\vec{f}\|_{L^\infty(\bR^n)}
\leq  C (t-s)^{-n/4}\,e^{\gamma\sqrt{t-s}+\nu\gamma^2(t-s)}
\norm{\vec{f}}_{L^2(\bR^n)} \quad \forall t>s.
\end{equation}
We also define the operator $Q^\psi_{t\to s}$ on $L^2(\bR^n)^N$ for $s<t$ by
setting $Q^\psi_{t\to s}\vec{g}(y)= e^{-\psi(y)}\vec{v}(s,y)$, where
$\vec{v}$ is the weak solution in
$\rV^{1,0}_2((-\infty,t)\times\bR^n)^N$ of the backward problem
\begin{equation} \label{eq3.6.11}
\left\{\begin{array}{l l}
\cLt \vec{v}=0\\
\vec{v}(t,\cdot)= e^{\psi}\vec{g}.\end{array}\right.
\end{equation}
Then, by \eqref{eq3.16}, we find that
\begin{equation*}
Q^\psi_{t\to s}\vec{g}(y)=
e^{-\psi(y)}\int_{\bR^n} {}^t\!\vec{\Gamma}(s,y,t,x)e^{\psi(x)}\vec{g}(x)\,dx
\quad \forall \vec{g}\in L^2(\bR^n)^N.
\end{equation*}
By a similar calculation that leads to \eqref{eq3.70.3}, we derive
\begin{equation} \label{eq3.73.3}
\|Q^\psi_{t\to s}\vec{g}\|_{L^\infty(\bR^n)}
\leq  C (t-s)^{-n/4}\,e^{\gamma\sqrt{t-s}+\nu\gamma^2(t-s)}
\norm{\vec{g}}_{L^2(\bR^n)}
\quad \forall s<t.
\end{equation}
Notice that it follows from \eqref{eq3.6.1} and \eqref{eq3.6.11} that
(cf. \eqref{eq2.17} and \eqref{eqn:3.56})
\begin{equation*}
\int_{\bR^n}(P^\psi_{s\to t}\vec{f}) \cdot \vec{g}=
\int_{\bR^n}\vec{f}\cdot (Q^\psi_{t\to s} \vec{g}).
\end{equation*}
Therefore, by duality, \eqref{eq3.73.3} implies that for
any $\vec{f}\in L^\infty_c(\bR^n)^N$, we have
\begin{equation} \label{eq3.74.3}
\|P^\psi_{s\to t}\vec{f}\|_{L^2(\bR^n)}
\leq  C (t-s)^{-n/4}\,e^{\gamma\sqrt{t-s}+\nu\gamma^2(t-s)}
\norm{\vec{f}}_{L^1(\bR^n)}
\quad \forall t>s.
\end{equation}
Now,  set $r=(s+t)/2$ and observe that by the uniqueness, we have
\begin{equation*}
P^\psi_{s\to t}\vec{f}= P^\psi_{r\to t}(P^\psi_{s\to r}\vec{f})
\qquad \forall \vec{f}\in L^\infty_c(\bR^n)^N.
\end{equation*}
Then, by noting that $t-r=r-s=(t-s)/2$, we obtain from \eqref{eq3.70.3}
and \eqref{eq3.74.3} that for any $\vec{f}\in L^\infty_c(\bR^n)^N$,
we have
\begin{equation*}
\|P^\psi_{s\to t}\vec{f}\|_{L^\infty(\bR^n)}
\leq  C (t-s)^{-n/2}\,e^{\gamma\sqrt{2(t-s)}+\nu\gamma^2(t-s)}
\norm{\vec{f}}_{L^1(\bR^n)}
\quad \forall t>s.
\end{equation*}
For fixed $x, y\in \bR^n$ with $x\neq y$,
the above estimate and \eqref{eq3.60.3} imply, by duality,
\begin{equation} \label{eq3.83}
e^{\psi(x)-\psi(y)}\abs{\vec{\Gamma}(t,x,s,y)}_{op}
\leq  C (t-s)^{-n/2}\,e^{\gamma\sqrt{2(t-s)}+\nu\gamma^2(t-s)}
\quad \forall t>s.
\end{equation}
Define $\psi_0$ on $[0,\infty)$ by setting
\begin{equation*}
\left\{\begin{array}{l l}
\psi_0(r)=r&\text{ if } r \leq \abs{x-y} \\
\psi_0(r)=\abs{x-y}&\text{ if } r>\abs{x-y}.
\end{array}\right.
\end{equation*}
Let $\psi(z):=\gamma\psi_0(|z-y|)$ where $\gamma=\abs{x-y}/2\nu(t-s)$.
Then, $\psi$ is a bounded Lipschitz function satisfying
$\abs{D\psi} \leq \gamma$ a.e., and thus \eqref{eq3.83} yields
\begin{equation*}  
\abs{\vec{\Gamma}(t,x,s,y)}_{op}
\leq  C (t-s)^{-n/2}\,
\exp\{\xi/\sqrt{2}\nu-\xi^2/4\nu\},
\end{equation*}
where $\xi=\abs{x-y}/\sqrt{t-s}$.
Let $C_0=C_0(\nu)=C_0(\Lambda^2/\lambda)$ be chosen so that
\begin{eqnarray*}
\exp(\xi/\sqrt{2}\nu-\xi^2/4\nu)\le C_0\exp(-\xi^2/8\nu)
\quad\forall \xi\in [0,\infty).
\end{eqnarray*}
If we set $\kappa=1/8\nu=\lambda/8\Lambda^2$, then we obtain
\begin{equation*}    
\abs{\vec{\Gamma}(t,x,s,y)}_{op}\le
C(t-s)^{-n/2}\exp\left\{-\kappa|x-y|^2/(t-s)\right\},
\end{equation*}
where $C=C(n,N,\lambda,\Lambda,\mu_0,C_0)>0$.
We have proved \eqref{eq5.27b}.
\subsection{Proof of \eqref{eq5.14z} and \eqref{eq5.27}} \label{sec:5-2}
We begin by proving the identity \eqref{eq5.14z}.
First note that \eqref{eq5.21}, \eqref{eq5.15}, and \eqref{eq5.15b} imply
that both $\abs{\vec\Gamma(t,x,r,\cdot)}$ and $\abs{\vec\Gamma(r,\cdot,s,y)}$
belong to $L^2(\bR^n)$ for all $x,y\in\bR^n$ and $r\in (s,t)$.
Then, by the uniqueness (cf. \eqref{eq5.24}) and Fubini's theorem,
we have for all $\vec g\in L^\infty_c(\bR^n)^N$,
\begin{align*}
\int_{\bR^n}\vec\Gamma(t,x,s,y)\vec{g}(y)\,dy
&=\int_{\bR^n}\vec\Gamma(t,x,r,z)\left(
\int_{\bR^n}\vec\Gamma(r,z,s,y)\vec{g}(y)\,dy\right)dz\\
&=\int_{\bR^n}\left(\int_{\bR^n} \vec\Gamma(t,x,r,z)\vec\Gamma(r,z,s,y)
\,dz\right) \vec{g}(y) \,dy.
\end{align*}
Since $\vec g\in L^\infty_c(\bR^n)^N$ is arbitrary, we conclude \eqref{eq5.14z}.

We now turn to the proof of the pointwise bound \eqref{eq5.27}.
By following the proof of \eqref{eq5.27b} in Section~\ref{sec5.1},
it is routine to check
\begin{equation}    \label{eq5.13z}
\abs{\vec{\Gamma}(t,x,s,y)}_{op}\le
C(t-s)^{-n/2}e^{-\kappa|x-y|^2/(t-s)}
\quad\text{if }0<t-s<R_c^2.
\end{equation}
Next, recall the identity
\begin{equation}    \label{eq5.15z}
\Phi(t+s,x-y)=\int_{\bR^n} \Phi(t,x-z)\Phi(s,z-y)\,dz
\quad\forall x,y\in\bR^n \quad\forall s,t>0,
\end{equation}
where $\Phi(t,x)=(4\pi t)^{-n/2}\exp\{-|x|^2/4t\}$.
Note that \eqref{eq5.13z} implies that
\begin{equation}    \label{eq5.16z}
\abs{\vec{\Gamma}(t,x,s,y)}_{op}\leq C(\pi/\kappa)^{n/2}
\Phi((t-s)/4\kappa,x-y)
\quad\text{if } 0<t-s\leq R_c^2.
\end{equation}
Let $\ell$ be the largest integer that is strictly less than $(t-s)/R_c^2$.
Denote $t_j=s+j R_c^2$ for $j=0,\ldots, \ell$ and $t_{\ell+1}=t$
so that $t_0=s$ and $t_\ell<t=t_{\ell+1}$.
Note that $|t_j-t_{j-1}|\leq R_c^2$ for all $j=1,\ldots,\ell+1$.
Then, by \eqref{eq5.14z}, \eqref{eq5.15z}, and \eqref{eq5.16z}, we have
\begin{align*}
\abs{\vec\Gamma(t,z_{\ell+1},s,z_0)}_{op}
&\leq \int_{\bR^n}\!\!\!\cdots\int_{\bR^n}
\prod_{j=1}^{\ell+1}
\abs{\vec\Gamma(t_j,z_j,t_{j-1},z_{j-1})}_{op}\,dz_1\cdots\,dz_\ell\\
&\leq\{C(\pi/\kappa)^{n/2}\}^{\ell+1} \Phi((t-s)/4\kappa,z_{\ell+1}-z_0)\\
&=C^{\ell+1}(\pi/\kappa)^{n\ell/2}(t-s)^{-n/2}
\exp\{-\kappa|z_{\ell+1}-z_0|^2/(t-s)\}.
\end{align*}
Therefore, \eqref{eq5.27} follows if we set $z_0=y$, $z_{\ell+1}=x$,
and $\gamma=\ln\{C(\pi/\kappa)^{n/2}\}$.

\subsection{Proof of the identity \eqref{eq5.99}}
First, note that as in \eqref{eq3.431p}, the following identity holds
for all $s_1<t$:
\begin{align}
\label{eq3.431}
\phi^k(X)&=\int_{\bR^n}{}^t\!\Gamma_{ik}(s_1,\cdot,t,x)\phi^i(s_1,\cdot)
+\int_{s_1}^{t}\!\int_{\bR^{n}}{}^t\!\Gamma_{ik}(\cdot,X)\phi^i_t\\
\nonumber
&\qquad+\int_{s_1}^{t}\!\int_{\bR^{n}}
{}^t\!A^{\alpha\beta}_{ij}D_\beta {}^t\!\Gamma_{jk}(\cdot,X) D_\alpha \phi^i
\quad\forall \vec{\phi}\in C^\infty_{c,p}(\bR^{n+1})^N.
\end{align}
Now, let $\zeta\in C^\infty_c(\bR^n)$ be a cut-off function satisfying
\begin{equation*}
\zeta\equiv 1\text{ in } B_1(0),
\quad \zeta\equiv 0\text{ outside } B_2(0),\quad
0\leq \zeta \leq 1,\quad \abs{D\zeta}\leq 2.
\end{equation*}
If we set $\vec{\phi}(Y)=\zeta((y-x)/R)\vec{e}_l$
and $s_1=s<t$ in \eqref{eq3.431},
then
\begin{align}
\label{eq387}
\delta_{kl}&=
\int_{\bR^n}{}^t\!\Gamma_{lk}(s,\cdot,t,x) \zeta((\cdot-x)/R)
+\int_s^t\!\!\!\int_{\bR^{n}}
{}^t\!A^{\alpha\beta}_{ij}D_\beta {}^t\!\Gamma_{jk}(\cdot,X) D_\alpha\phi^i\\
\nonumber
&=I+II.
\end{align}
By \eqref{eqn:E-66}, \eqref{eq5.27}, and the dominated convergence theorem,
we have
\begin{equation*}
\lim_{R\to\infty}I=\lim_{R\to\infty}
\int_{\bR^n}\Gamma_{kl}(t,x,s,y)\zeta((y-x)/R)\,dy=
\int_{\bR^n}\Gamma_{kl}(t,x,s,y)\,dy.
\end{equation*}
On the other hand, using H\"older's inequality we estimate $II$ by
\begin{equation*}
\abs{II}\le
C R^{n/2-1}(t-s)^{1/2}
\left(\int_s^t\!\!\!\int_{B_{2R}(x)\setminus B_R(x)}
\abs{D{}^t\!\vec{\Gamma}(\cdot,X)}^2\right)^{1/2}.
\end{equation*}
Next, let $\eta\in C^\infty_c(B_{3R}(x))$ be such that $\eta\equiv 1$ in
$B_{2R}(x)\setminus B_R(x)$, $\eta\equiv 0$ in $B_{R/2}(x)$,
and $\abs{D \eta}\leq C/R$.
From \eqref{eq5.27} and \eqref{eqn:E-66p}, it follows that
$\lim_{s \uparrow t}\eta{}^t\!\vec{\Gamma}(s,\cdot,t,x)\equiv 0$.
Note that ${}^t\!\vec{\Gamma}(\cdot,X)$ satisfies $\cLt\vec u=0$ in
$\set{(s,y)\in\bR^{n+1}: s<t}$. Therefore, as it is done in the proof
of \eqref{eq5.59}, we derive
\begin{equation*}
\int_s^t\!\!\!\int_{B_{2R}(x)\setminus B_R(x)}
\abs{D{}^t\!\vec{\Gamma}(\cdot,X)}^2
\leq C R^{-2}\int_s^t\!\!\!\int_{B_{3R}(x)\setminus B_{R/2}(x)}
\abs{{}^t\!\vec{\Gamma}(\cdot,X)}^2.
\end{equation*}
If $R>\sqrt{t-s}$, then by \eqref{eqn:E-66p} and \eqref{eq5.27}, we estimate
(cf. \eqref{eq5.59})
\begin{equation*}
\int_s^t\!\!\!\int_{B_{3R}(x)\setminus B_{R/2}(x)}
\abs{{}^t\!\vec{\Gamma}(\cdot,X)}^2\leq C e^{2\gamma (t-s)/{R_c^2}} R^{2-n}.
\end{equation*}
Therefore, we find that $II$ is bounded by
\begin{equation*}
\abs{II}\le
C R^{-1}(t-s)^{1/2}e^{\gamma(t-s)/R_c^2}\quad\text{if }R>\sqrt{t-s},
\end{equation*}
and thus, we obtain \eqref{eq5.99} by taking $R\to\infty$ in \eqref{eq387}.

\mysection{Appendix} \label{appendix}
\begin{lemma}\label{lem:Ap01}
Let $\set{u_k}_{k=1}^\infty$ be a sequence in $V_2(\U)$ (resp. $\rV_2(\U)$),
where $\U=\bR\times\Omega$ and $\Omega$ is an open connected set in $\bR^n$.
If $\sup_k\tri{u_k}_\U=M<\infty$, then there exist
a subsequence $\{u_{k_j}\}_{j=1}^\infty\subseteq \{u_k\}_{k=1}^\infty$
and $u\in V_2(\U)$ (resp. $\rV_2(\U)$) with $\tri{u}_{\U}\leq M$
such that $u_{k_j}\wto u$ ``very weakly'' in $V_2(\U)$.
\end{lemma}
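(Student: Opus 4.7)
The plan is a Cantor diagonal extraction combined with a duality argument to recover the $L^\infty_t L^2_x$ component of the triple-norm. From the hypothesis $\sup_k \tri{u_k}_{\U} \le M$, for each $T>0$ we have
\begin{equation*}
\|u_k\|_{W^{1,0}_2(\U_T)}^2 = \|u_k\|_{L^2(\U_T)}^2 + \|Du_k\|_{L^2(\U_T)}^2 \le (2T+1)M^2,
\end{equation*}
since $\|u_k(t,\cdot)\|_{L^2(\Omega)}^2 \le M^2$ for a.e.\ $t$ and $|I(\U_T)|\le 2T$. Weak compactness in the Hilbert space $W^{1,0}_2(\U_T)$ applied for $T=1,2,\ldots$, together with a diagonal extraction, yields a subsequence $\{u_{k_j}\}$ and a function $u$ on $\U$ such that $u_{k_j}\wto u$ weakly in $W^{1,0}_2(\U_T)$ for every $T>0$; uniqueness of weak limits ensures the restrictions to nested cylinders agree, so $u$ is well defined on all of $\U$.

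The gradient bound $\|Du\|_{L^2(\U)}\le M$ is immediate from weak lower semicontinuity of the $L^2$ norm on each $\U_T$ followed by $T\to\infty$. The main obstacle is the essential-supremum bound
\begin{equation*}
\esssup_{t\in\bR}\|u(t,\cdot)\|_{L^2(\Omega)}\le M,
\end{equation*}
since weak $W^{1,0}_2(\U_T)$-convergence does not automatically transfer such a bound. I would resolve this by duality: for any $g\in L^\infty_c(\bR)$ and $\phi\in L^2(\Omega)$, the tensor product $g(t)\phi(x)$ lies in $L^2(\U_T)$ for $T$ large enough, and
\begin{equation*}
\left|\int_{\U} u_k(t,x)\,g(t)\phi(x)\,dx\,dt\right|
\le \int_{\bR}|g(t)|\,\|u_k(t,\cdot)\|_{L^2(\Omega)}\,\|\phi\|_{L^2(\Omega)}\,dt
\le M\|g\|_{L^1}\|\phi\|_{L^2}.
\end{equation*}
Passing to the weak limit preserves this inequality with $u$ in place of $u_k$.

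Now fix a countable dense set $\{\phi_n\}_{n=1}^\infty\subset L^2(\Omega)$, which exists by separability. The previous inequality, together with the standard $L^\infty$--$L^1$ duality applied to the measurable and locally integrable function $t\mapsto\langle u(t,\cdot),\phi_n\rangle_{L^2(\Omega)}$, shows that this function belongs to $L^\infty(\bR)$ with norm at most $M\|\phi_n\|_{L^2}$. Taking the union of the associated null sets together with the Fubini null set $\{t: u(t,\cdot)\notin L^2(\Omega)\}$ (which has measure zero because $u\in L^2(\U_T)$ for every $T$), I obtain a null set $N\subset\bR$ such that for every $t\notin N$ and every $n$,
\begin{equation*}
|\langle u(t,\cdot),\phi_n\rangle_{L^2(\Omega)}|\le M\|\phi_n\|_{L^2(\Omega)}.
\end{equation*}
Density of $\{\phi_n\}$ in $L^2(\Omega)$ then yields $\|u(t,\cdot)\|_{L^2(\Omega)}\le M$ for a.e.\ $t$, completing the bound $\tri{u}_{\U}\le M$. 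Hence $u\in V_2(\U)$ and the convergence is very weak in $V_2(\U)$ by definition. Finally, the $\rV_2(\U)$ case follows immediately: $\rW^{1,0}_2(\U_T)$ is a closed (and hence weakly closed) subspace of $W^{1,0}_2(\U_T)$, so the weak limit of a sequence from $\rW^{1,0}_2(\U_T)$ remains in it, giving $u\in\rV_2(\U)$.
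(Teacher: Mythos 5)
Your proposal is correct: the diagonal extraction from weak compactness in each $W^{1,0}_2(\U_T)$ and the closedness of $\rW^{1,0}_2(\U_T)$ under weak limits match the paper exactly, and your treatment of the delicate step --- recovering the $\esssup_t\|u(t,\cdot)\|_{L^2}$ bound, which weak $W^{1,0}_2$-convergence does not carry over automatically --- is sound, but it takes a genuinely different route from the paper's. You proceed by duality: you pair $u_k$ with tensor test functions $g(t)\phi(x)$ to show $\big|\int_\bR\langle u(\,\cdot\,,\cdot),\phi\rangle\,g\big|\le M\|\phi\|_{L^2}\|g\|_{L^1}$, invoke $L^\infty$--$L^1$ duality for each $\phi$ in a countable dense family, and then take a union of null sets plus density to conclude $\|u(t,\cdot)\|_{L^2}\le M$ a.e. This uses separability of $L^2(\Omega)$ and a mild argument that a bound against all $g\in L^\infty_c(\bR)$ suffices to conclude $L^\infty$-membership of a locally integrable function; both points are standard but worth stating, and your write-up glosses over them slightly. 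The paper instead fixes $t$ and $h>0$, uses weak lower semicontinuity of the $L^2\big((t-h,t+h)\times\Omega\big)$ norm to get $\frac{1}{2h}\int_{t-h}^{t+h}\|u(\tau,\cdot)\|_{L^2}^2\,d\tau\le M^2$, and then sends $h\to 0$, which by the Lebesgue differentiation theorem applied to $\tau\mapsto\|u(\tau,\cdot)\|_{L^2}^2\in L^1_{\mathrm{loc}}$ gives the pointwise-a.e.\ bound directly. The paper's argument is somewhat shorter and avoids any appeal to separability; your duality argument is more abstract and would transfer more readily to Banach-space--valued or non-separable settings, but both establish exactly the same inequality.
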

\begin{proof}
Let $\U_T:=\U \cap (-T,T)\times\bR^n$, where $T>0$.
We claim that for all $T>0$, there exist
a subsequence $\{u_{k_j}\}_{j=1}^\infty\subseteq \{u_k\}_{k=1}^\infty$
and $u\in V_2(\U_T)$ with $\tri{u}_{\U_T}\leq M$
such that $u_{k_j}\wto u$ weakly in $W^{1,0}_2(\U_T)$.
It will be clear from the proof below that
if $\set{u_k}_{k=1}^\infty$ is a sequence in $\rV_2(\U)$,
then we also have $u\in \rV_2(\U_T)$.
Therefore, once we prove the claim, the lemma will follow
from a standard diagonalization argument.

Now, let us prove the claim.
Notice that $\norm{u_k}_{W^{1,0}_2(\U_T)}\leq C \tri{u_k}_{\U_T}\leq C M$
for some $C=C(n,N,T)<\infty$.
Therefore, there exist a subsequence
$\{u_{k_j}\}_{j=1}^\infty\subseteq \{u_k\}_{k=1}^\infty$
and $u\in W^{1,0}_2(\U_T)$ such that
such that $u_{k_j}\wto u$ weakly in $W^{1,0}_2(\U_T)$.
For simplicity, let us relabel the subsequence and assume
that $u_j\wto u$ in $W^{1,0}_2(\U_T)$. 
We only need to show that $u\in V_2(\U_T)$.
Let $t\in (-T,T)$ be fixed and choose $h>0$ so small that 
$(t-h,t+h)\subset (-T,T)$.
Since $u_j\wto u$ weakly in $L^2(\U_T)$,
we also have $u_j\wto u$ weakly in $L^2((t-h,t+h)\times\Omega)$, and thus
\begin{equation*}
\frac{1}{2h}\int_{t-h}^{t+h}\!\!\!\int_\Omega\abs{u(\tau,x)}^2\,dx\,d\tau
\leq \liminf_{j\to\infty}
\frac{1}{2h}\int_{t-h}^{t+h}\!\!\!\int_\Omega\abs{u_j(\tau,x)}^2\,dx\,d\tau
\leq M^2.
\end{equation*}
Therefore, by taking $h\to 0$, we obtain
$\esssup_{t\in(-T,T)}\|u(t,\cdot)\|_{L^2}^2\leq M^2$, which together with
$u\in W^{1,0}_2(\U_T)$ implies that $u\in V_2(\U_T)$.
It is clear that
$\tri{u}_{\U_T}\leq \sup_k\tri{u_k}_{\U_T}\leq M$.
We have proved the claim, and thus the lemma.
\end{proof}

\begin{lemma}\label{lem:Ap02}
Assume the coefficients $\vec A^{\alpha\beta}$ of the operator
$\cL$ in \eqref{eqn:P-01} are independent of $x$ 
(i.e., $\vec A^{\alpha\beta}=\vec A^{\alpha\beta}(t)$) and satisfy
the conditions \eqref{eqn:P-02} and \eqref{eqn:P-03}.
Then, there exists a constant $C=C(n,N,\lambda,\Lambda)>0$ such that
all weak solutions  $\vec{u}$ of $\cL\vec u =0$ in $Q_R^-(X_0)$ satisfy
\begin{equation}  \label{eqAp01}
\int_{Q_\rho^-(X_0)}\abs{D \vec u}^2\leq
C\left(\frac \rho r\right)^{n+2}
\int_{Q_r^-(X_0)}\abs{D \vec u}^2 \quad \forall 0<\rho<r\leq R
\end{equation}
and similarly,
all weak solutions  $\vec{u}$ of $\cLt\vec u =0$ in $Q_R^+(X_0)$ satisfy
\begin{equation}  \label{eqAp02}
\int_{Q_\rho^+(X_0)}\abs{D \vec u}^2\leq
C\left(\frac \rho r\right)^{n+2}
\int_{Q_r^+(X_0)}\abs{D \vec u}^2 \quad \forall 0<\rho<r\leq R.
\end{equation}
\end{lemma}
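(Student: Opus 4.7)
The approach exploits the fact that $\vec A^{\alpha\beta}$ depends only on $t$, so the system commutes with spatial translations. The first step is to verify that if $\vec u$ is a weak solution of $\cL \vec u = 0$ in $Q_R^-(X_0)$, then each spatial derivative $D_\gamma \vec u$ ($\gamma = 1,\ldots,n$) is again a weak solution of $\cL(D_\gamma \vec u) = 0$ in every strictly smaller subcylinder. The standard way is via the finite-difference quotient $\delta_\gamma^h \vec u(t,x) := h^{-1}(\vec u(t,x+h e_\gamma) - \vec u(t,x))$: since $\vec A^{\alpha\beta}(t)$ is unaffected by translations in $x$, each $\delta_\gamma^h \vec u$ satisfies the same weak formulation on any cylinder whose $h$-neighborhood lies in $Q_R^-(X_0)$. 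Caccioppoli's inequality bounds $\delta_\gamma^h \vec u$ in $L^2_{loc}$ uniformly in $h$, so one may pass to the limit $h\to 0$.

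Iterating this observation, every pure spatial derivative $D_x^k \vec u$ is also a weak solution of the same system on slightly smaller cylinders, to which I would apply the standard energy inequality for strongly parabolic systems to obtain
\begin{equation*}
\int_{Q^-_{r/2}(X_0)} \abs{D(D_x^k \vec u)}^2 \le \frac{C}{r^2}\int_{Q^-_r(X_0)} \abs{D_x^k \vec u}^2.
\end{equation*}
By induction this yields $D_x^k \vec u \in L^2_{loc}$ for every $k$. Combining Sobolev embedding applied slice-wise in $x$ with the equation itself (which expresses $\partial_t \vec u$ as a combination of second-order spatial derivatives with bounded coefficients), one obtains time-continuity of the $x$-derivatives and hence the interior $L^\infty$ bound
\begin{equation*}
\sup_{Q^-_{r/2}(X_0)} \abs{D\vec u}^2 \le \frac{C}{r^{n+2}} \int_{Q^-_r(X_0)} \abs{D\vec u}^2,
\end{equation*}
with $C = C(n,N,\lambda,\Lambda)$, applied to the solution $D_\gamma \vec u$ summed over $\gamma$.

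With this pointwise bound in hand, the Campanato decay \eqref{eqAp01} is immediate. For $0<\rho \le r/2$,
\begin{equation*}
\int_{Q^-_\rho(X_0)} \abs{D\vec u}^2 \le \abs{Q^-_\rho}\sup_{Q^-_\rho(X_0)} \abs{D\vec u}^2 \le C\rho^{n+2} r^{-n-2} \int_{Q^-_r(X_0)} \abs{D\vec u}^2,
\end{equation*}
and for $r/2 < \rho \le r$ the inequality holds trivially with constant $2^{n+2}$. Since $\cLt$ has coefficients ${}^t\!\vec A^{\alpha\beta}(t)$ also independent of $x$ and satisfying \eqref{eqn:P-02}--\eqref{eqn:P-03} with the same constants, the identical argument on forward cylinders $Q^+_r(X_0)$ yields \eqref{eqAp02}.

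The main technical point is the initial differentiability step: justifying that $D_\gamma \vec u$ is itself an admissible weak solution requires a careful limit of difference quotients, relying critically on the fact that the coefficients do not depend on $x$. Once this is in place, the iterative energy bounds and the resulting $L^\infty$ gradient estimate follow by routine arguments, and the decay estimate is an elementary consequence.
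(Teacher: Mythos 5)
Your proof is correct and follows essentially the same route as the paper's. Both proofs rely on the key observation that $x$-independence of the coefficients makes every pure spatial derivative $D_x^k\vec u$ a weak solution of the same system, iterate the parabolic energy inequality to bound $\sup_t\|\vec u(t,\cdot)\|_{W^{k,2}(B_{1/2})}$ by $\|\vec u\|_{L^2(Q_1^-)}$ for $k>n/2$, apply slice-wise Sobolev embedding to get an interior $L^\infty$ bound, and then convert that pointwise bound into the $(\rho/r)^{n+2}$ Campanato decay (handling the trivial range $r/2<\rho\le r$ separately and rescaling to general $r$, $X_0$). The only cosmetic difference is that the paper first proves the decay for $\int|\vec u|^2$ itself and then substitutes $D_k\vec u$ at the end, while you apply the $L^\infty$ estimate directly to the gradient; and your mention of "time-continuity" is a slight detour, since all that is needed is the essential-sup-in-$t$ control already provided by the $V_2$ energy estimate.
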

\begin{proof}
First, we consider the case $r=1$ and $X_0=0$.
Assume that $\vec u$ is a weak solution of $\cL \vec{u}=0$ in $Q_1^-(0)$.
Let $k$ be the smallest integer strictly larger than $n/2$.
Since the coefficients $\vec{A}^{\alpha\beta}$ of the operator $\cL$
are independent of $x$,
we obtain by iterative applications of the energy inequalities
\begin{equation}  \label{eqAp06}
\norm{\vec u(t,\cdot)}_{W^{k,2}(B_{1/2}(0))}
\leq C\norm{\vec u}_{L^2(Q_1^-(0))}
\quad \forall t \in (-1/4,0),
\end{equation}
where $C=C(n,\lambda,\Lambda)$.
Moreover,
by the Sobolev inequality (see e.g., \cite[\S 5.6.3]{Evans})
\begin{equation}  \label{eqAp03}
\norm{\vec u(t,\cdot)}_{L^\infty(B_{1/2}(0))}
\leq C \norm{\vec u(t,\cdot)}_{W^{k,2}(B_{1/2}(0))}
\quad \forall t\in (-1/4,0),
\end{equation}
where $C=C(n,\lambda,\Lambda,B_{1/2}(0))=C(n,\lambda,\Lambda)$.
Then, we obtain from \eqref{eqAp03} and \eqref{eqAp06}
\begin{equation*}
\int_{Q_\rho^-(0)}\abs{\vec u}^2
\leq C \rho^{n+2}\norm{\vec u}_{L^2(Q_1^-(0))}^2
\quad \forall \rho \in (0,1/2),
\end{equation*}
and thus by replacing $C$ by $\max(2^{n+2}, C)$ if necessary,
we have
\begin{equation}  \label{eqAp04}
\int_{Q_\rho^-(0)}\abs{\vec u}^2
\leq C \rho^{n+2}\norm{\vec u}_{L^2(Q_1^-(0))}^2
\quad \forall \rho \in (0,1).
\end{equation}
Now, we consider general case.
Assume that $\vec u$ is a weak solution of $\cL \vec u=0$ in $Q_r^-(X_0)$.
Then, $\tilde{\vec u}(t,x):=\vec{u}((t-t_0)/r^2,(x-x_0)/r)$ satisfies
$\tilde \cL \tilde{\vec u}=0$ in $Q_1^-(0)$, where the coefficients
$\tilde \cL$ are still independent of $x$ and satisfies \eqref{eqn:P-02}
and \eqref{eqn:P-03}; more precisely, they are given by
$\tilde{\vec A}{}^{\alpha\beta}(t)={\vec A}^{\alpha\beta}((t-t_0)/r^2)$.
Therefore, by \eqref{eqAp04} and the change of variables, we obtain
\begin{equation*}
r^{-n-2}\int_{Q_{\rho r}^-(X_0)}\abs{\vec u}^2
\leq C (\rho/r)^{n+2}\norm{\vec u}_{L^2(Q_r^-(X_0))}^2
\quad \forall \rho \in (0,1).
\end{equation*}
The above estimate is equivalent to
\begin{equation}  \label{eqAp05}
\int_{Q_{\rho}^-(X_0)}\abs{\vec u}^2 \leq
C\left(\frac \rho r\right)^{n+2} \int_{Q_r^-(X_0)}\abs{\vec u}^2
\quad \forall \rho \in (0,r).
\end{equation}
Therefore, \eqref{eqAp01} follows from \eqref{eqAp05} and the observation
that for all $k=1,\ldots,n$, $D_k\vec u$ is also a weak solution of
$\cL \vec u=0$ in $Q_r^-(X_0)$.
\end{proof}

\textbf{Acknowledgments} The authors thank Steve Hofmann and Neil Trudinger for very useful discussion.


\end{document}